\DeclareMathOperator{\diam}{diam}
\DeclareMathOperator{\supp}{supp}
\newcommand{\norm}[1]{\left\lVert#1\right\rVert}
\newcommand{\normdot}{{\|\!\cdot\!\|}}
\newcommand{\normdotc}{{|\!\cdot\!|}}
\newcommand{\Leb}{\mathscr{L}}
\newcommand{\N}{\mathbb{N}}
\newcommand{\R}{\mathbb{R}}
\newcommand{\p}{\mathtt p} %projection
\newcommand{\de}{\ensuremath{\, \mathrm d}} % in the integrals
\newcommand\restr[2]{{% we make the whole thing an ordinary symbol
  \left.\kern-\nulldelimiterspace % automatically resize the bar with \right
  #1 % the function
  \right|_{#2} % this is the delimiter
  }}
\newcommand{\weakto}{\rightharpoonup}
\newcommand{\cd}{\mathsf{CD}
}
\newcommand{\RCD}{\mathsf{RCD}}
\newcommand{\MCP}{\mathsf{MCP}}
\newcommand{\X}{\mathsf{X}}
\newcommand{\Y}{\mathsf{Y}}
\newcommand{\Z}{\mathsf{Z}}
\DeclareMathOperator{\res}{restr}
\newcommand{\upden}[1]{\Theta_{#1}^*[\m]}
\newcommand{\loden}[1]{\Theta^{#1}_{*}[\m]}
\newcommand{\di}{\mathsf d} %standard mms distance notation
\newcommand{\m}{\mathfrak m}%standard mms measure notation 
\newcommand{\n}{\mathfrak n}
\newcommand{\s}{\mathfrak s}
\newcommand{\g}{\mathfrak g}
\DeclareMathOperator{\Geo}{Geo}
\DeclareMathOperator{\Tang}{Tang}
\newcommand{\Prob}{\mathscr{P}}
\newcommand{\ProbTwo}{\mathscr{P}_2}
\newcommand{\dis}{\mathcal D}
\newcommand{\sF}{sub-Finsler }
\newcommand{\sr}{sub-Riemannian }
\newcommand{\e}{{\rm e}
}
\newcommand{\hei}{\mathbb{H}}
\newcommand{\Tan}{{\rm Tan}}
\title{\textbf{On the rectifiability of $\cd(K,N)$ and $\MCP(K,N)$ spaces with unique tangents}}
\date{\today}
\author{Mattia Magnabosco\footnote{Mathematical Institute, University of Oxford \textit{E-mail}:  \href{mailto:mattia.magnabosco@maths.ox.ac.uk}{mattia.magnabosco@maths.ox.ac.uk}},\quad Andrea Mondino\footnote{Mathematical Institute, University of Oxford \textit{E-mail}:  \href{mailto:andrea.mondino@maths.ox.ac.uk}{andrea.mondino@maths.ox.ac.uk}},\quad Tommaso Rossi\footnote{Laboratoire Jacques-Louis Lions, Sorbonne Universit\'e. \textit{E-mail}: \href{mailto:tommaso.rossi@inria.fr}{tommaso.rossi@inria.fr}}}
\newtheoremstyle{remark}% <name>
        {10pt}% <Space above>
        {10pt}% <Space below>
        {}% <Body font>
        {}% <Indent amount>
        {\itshape}% <Theorem head font>
        {.}% <Punctuation after theorem head>
        {.4em}% <Space after theorem head>
        {}% <Theorem head spec (can be left empty, meaning 'normal')>
\newtheoremstyle{proof}% <name>
        {10pt}% <Space above>
        {10pt}% <Space below>
        {}% <Body font>
        {}% <Indent amount>
        {\itshape}% <Theorem head font>
        {.}% <Punctuation after theorem head>
        {.4em}% <Space after theorem head>
        {}% <Theorem head spec (can be left empty, meaning 'normal')>
\newtheoremstyle{definition}% <name>
        {10pt}% <Space above>
        {10pt}% <Space below>
        {}% <Body font>
        {}% <Indent amount>
        {\bfseries}% <Theorem head font>
        {.}% <Punctuation after theorem head>
        {.4em}% <Space after theorem head>
        {}% <Theorem head spec (can be left empty, meaning 'normal')>        
\newtheoremstyle{theorem}% <name>
        {10pt}% <Space above>
        {10pt}% <Space below>
        {\slshape}% <Body font>
        {}% <Indent amount>
        {\bfseries}% <Theorem head font>
        {.}% <Punctuation after theorem head>
        {.4em}% <Space after theorem head>
        {}% <Theorem head spec (can be left empty, meaning 'normal')>
\newtheoremstyle{conjecture}% <name>
        {10pt}% <Space above>
        {10pt}% <Space below>
        {\slshape}% <Body font>
        {}% <Indent amount>
        {\itshape}% <Theorem head font>
        {.}% <Punctuation after theorem head>
        {.4em}% <Space after theorem head>
        {}% <Theorem head spec (can be left empty, meaning 'normal')>
\theoremstyle{theorem}
\newtheorem{theorem}{Theorem}[section]
\newtheorem{prop}[theorem]{Proposition}
\newtheorem{corollary}[theorem]{Corollary}
\newtheorem{lemma}[theorem]{Lemma}
\newtheorem{conj}[theorem]{Conjecture}
\theoremstyle{conjecture}
\newtheorem*{conj*}{Conjecture}
\theoremstyle{definition}
\newtheorem{definition}[theorem]{Definition}
\theoremstyle{remark}
\newtheorem{remark}[theorem]{Remark}
\theoremstyle{proof}
\newtheorem*{pro}{Proof}
 {\popQED\end{pro}}
\renewcommand\xleftrightarrow[2][]{%
  \ext@arrow 9999{\longleftrightarrowfill@}{#1}{#2}}
\newcommand\longleftrightarrowfill@{%
  \arrowfill@\leftarrow\relbar\rightarrow}
\begin{document}

\maketitle

\begin{abstract}
    \noindent We prove rectifiability results for $\cd(K,N)$ and $\MCP(K,N)$ metric measure spaces $(\X,\di,\m)$ with pointwise Ahlfors regular reference measure $\m$ and with $\m$-almost everywhere unique metric tangents. In particular, we show rectifiability if
    \begin{enumerate}
    \item[(i)] $(\X,\di,\m)$ is $\cd(K,N)$ for an arbitrary $N$ and has Hausdorff dimension $n<5$, or
    \item[(ii)] $(\X,\di,\m)$ is $\MCP(K,N)$ and non-collapsed, namely it has Hausdorff dimension $N$. 
    \end{enumerate}
    Our strategy is based on the failure of the $\cd$ condition in \sF Carnot groups, on a new result on the failure of the non-collapsed $\MCP$ on \sF Carnot groups, and on the recent breakthrough by Bate \cite{MR4506771}.
\end{abstract}

\tableofcontents

\section{Introduction}

The synthetic notion of curvature-dimension condition for metric measure spaces, denoted by $\cd(K,N)$, has been introduced in the early 2000s by Lott--Sturm--Villani \cite{sturm2006-1,sturm2006,lott--villani2009}.\ It originates from the observation that, in a Riemannian manifold $(M,g)$, having a uniform lower bound $K\in\R$ on the Ricci curvature and an upper bound $N\in (1,\infty]$ on the dimension can be equivalently characterized as the $(K,N)$-convexity of a suitable entropy functional along geodesics in the Wasserstein space, see \cite{zbMATH01777215, MR2142879, sturm2006} (after \cite{McCannThesis,OttoVillani}). While the definition of Ricci tensor requires the smooth underlying structure of a Riemannian manifold, entropy convexity can be formulated using the language of optimal transport, and relying solely upon a distance and a measure. Therefore, entropy convexity can be formulated in the setting of metric measure spaces and taken as a consistent definition of a curvature-dimension bound. 

Beyond its consistency with the smooth setting, the $\cd(K,N)$ condition has many geometric and analytic consequences. For example, it implies a Bishop--Gromov and a Brunn--Minkowski inequality and it is stable with respect to the (pointed) measured Gromov-Hausdorff convergence \cite{sturm2006, lott--villani2009, Villani-Book-OTO&N}. Moreover, under a suitable non-branching assumption on geodesics, it enjoys the local-to-global property \cite{MR4309491}. The study of $\cd(K,N)$ spaces has led to a deep understanding of the essential features of classical geometric inequalities of Riemannian geometry, allowing for sharpenings of the statements, including novel rigidity and almost-rigidity results \cite{MR1405949,MR3648975,MR4073947}.

A weaker curvature-dimension bound is the \emph{measure contraction property}, or $\MCP(K,N)$ for short, introduced by Ohta in \cite{ohta2007}. This property requires the $(K,N)$-convexity of a suitable entropy functional on the Wasserstein space, along those geodesics that start at a Dirac's mass. Similarly to the $\cd$ condition, $\MCP(K,N)$ is consistent with the classical Riemannian setting, thus the parameters $K$ and $N$ represent a synthetic Ricci curvature lower bound and a dimensional upper bound, respectively. However, while $\cd(K,N)$ implies $\MCP(K,N)$, the latter condition is strictly weaker. For example, there are many \sr manifolds that satisfy the measure contraction property, but are not $\cd(K,N)$ for any choice of the constants $K\in \R$ and $N\in (1,\infty]$, see e.g.\ \cite{MR2520783}.

In this paper, we study the rectifiability of $\cd(K,N)$ and $\MCP(K,N)$ spaces, a long standing open problem in the theory. Given $n\in\N$, a metric measure space $(\X,\di,\m)$ is $(\m,n)$-rectifiable if there exists a subset of full $\m$-measure which can be covered by countably many Lipschitz images of $\R^n$. Due to the broad generality of their definition, $\cd(K,N)$ and $\MCP(K,N)$ spaces may have a very singular geometric structure, and classical analytic tools are not typically available. Consequently, the problem of rectifiability of these spaces is particularly challenging and requires new strategies.
 
With the aim of developing analytic tools in the non-smooth context, Ambrosio, Gigli and Savaré \cite{AmbrosioGigliSavare11} (in the case $N=\infty$) and Gigli \cite{GigliMemoirs} (in the case $N<\infty$) considered the class of $\RCD(K,N)$ metric measure spaces, namely $\cd(K,N)$ spaces satisfying an additional assumption called infinitesimal Hilbertianity. The latter asks the Sobolev space $W^{1,2}(\X,\di,\m)$ (defined according to Cheeger \cite{cheeger}) to be Hilbert or, equivalently,  to having a linear heat flow. In this regard,  infinitesimal Hilbertianity forces the space to be Riemannian-like \cite{MR2917125}.
 
The rectifiability of $\RCD(K,N)$ spaces was proved by the second-named author and Naber in \cite{MR3945743}. In particular, they showed that any $\RCD(K,N)$ space $(\X,\di,\m)$ can be decomposed, up to an $\m$-null set, in $\left\lfloor N \right\rfloor $ ``regular'' sets, denoted by  $\mathscr{R}_k$, which are $k$-rectifiable. This result was then independently refined in \cite{MR3801291,MR4367429,MR3701738}, where it was proved that, for every $k\leq N$, the measure $\m_{|\mathscr{R}_k}$ is absolutely continuous with respect to the Hausdorff measure $\mathcal H^k$. Finally, Brué and Semola \cite{MR4156601} were able to prove the constancy of dimension for $\RCD(K,N)$ spaces, namely that there exists exactly one regular set $\mathscr{R}_k$ having positive measure. Let us mention that both the rectifiability and the constancy of the dimension had been previously obtained in the framework of Ricci-limits by Cheeger-Colding \cite{CC-JDG-I,CC-JDG-II, CC-JDG-III} and by Colding-Naber \cite{CN-12}, respectively. A major difference between establishing the results for Ricci-limits and for $\RCD(K,N)$ spaces is that, in the former, it is possible to appeal on the existence of approximations by smooth manifolds with Ricci curvature bounded below, while in the latter one has to work directly on the non-smooth metric measure space satisfying the Ricci curvature lower bounds in a synthetic sense. 

The aforementioned structural results for $\RCD(K,N)$ spaces heavily rely on the infinitesimal Hilbertianity assumption. For example, the very first step of the construction of the regular sets in \cite{MR3945743} uses the splitting theorem which fails for a general $\cd(K,N)$ (or $\MCP(K,N)$) space. In this paper, we propose a different approach to the problem of rectifiability, based on a recent breakthrough by Bate \cite{MR4506771}. 

Our first main result (Theorem \ref{thm:rectifiability<5}) deals with the rectifiability of $\cd(K,N)$ spaces having a small (pointwise) Ahlfors dimension. The latter is a real number $n>0$ for which the $n$-th upper and lower $\m$-densities $\upden{n}$ and $\loden{n}$ are both positive and finite $\m$-almost everywhere. We refer to Section \ref{sec:meta_rectifiability} for the precise definitions. 

\begin{theorem}\label{thm:main}
    Given $K\in\R$ and $N\in (1,\infty)$, let $(\X,\di,\m)$ be a $\cd(K,N)$ space. Assume that:  
    \begin{itemize}
        \item[i)] for $\m$-a.e.\ $x\in\X$, $\Tan_{{\rm pGH}}(\X, \di,x)$ contains a single element, up to isometry;
        \item[ii)] there exists $n\in (0,5)$ such that $0<\loden{n}(x)\leq \upden{n}(x)<\infty$, for $\m$-a.e.\ $x\in\X$. 
    \end{itemize}
     Then:
     \begin{itemize}
\item     $n$ is an integer;
\item $(\X,\di,\m)$ is $(\m,n)$-rectifiable;
\item for $\m$-a.e.\ $x\in\X$, $\Tan_{{\rm pmGH}}(\X, \di,\m,x)$ contains a single element (up to isomorphism) and its metric part is isometric to an $n$-dimensional Banach space.
\end{itemize}
\end{theorem}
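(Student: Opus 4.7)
The plan is to identify the unique metric measure tangent of $(\X,\di,\m)$ at $\m$-a.e.\ point as an $n$-dimensional Banach space by using the dimensional restriction $n<5$, and then to apply Bate's rectifiability result from \cite{MR4506771}. The intermediate identification relies on the classification of low-dimensional sub-Finsler Carnot groups, together with the paper's new result on the failure of the $\cd$ condition on sub-Finsler Carnot groups (extending Juillet's sub-Riemannian theorem).

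As a preliminary step, I would upgrade the unique pGH tangent hypothesis (i) to unique pmGH tangents, $\m$-a.e. Thanks to the pointwise Ahlfors regularity (ii), the rescaled reference measures $r^{-n}(T_{x,r})_\sharp\m$ are locally uniformly bounded on balls, so pmGH-precompactness holds along any sequence $r_i\downarrow 0$. The unique pGH tangent hypothesis forces the metric parts of any two subsequential pmGH limits to coincide up to isometry, while the $n$-Ahlfors regularity pins down the limiting reference measure, up to a multiplicative constant, as a constant multiple of the $n$-dimensional Hausdorff measure of the limit metric. This produces, $\m$-a.e., a unique pmGH tangent $(\Y,\di_\Y,\m_\Y,y)$.

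By stability of $\cd(K,N)$ under pmGH convergence, combined with the behaviour of the condition under rescaling (which sends $K\mapsto 0$), the tangent $(\Y,\di_\Y,\m_\Y)$ satisfies $\cd(0,N)$ and is pointwise $n$-Ahlfors regular. Iterating the tangent construction at the base point $y$ returns $\Y$ itself, so $\Y$ enjoys a one-parameter family of metric-measure dilations. A Mitchell--Le Donne-type rigidity theorem for Ahlfors regular, locally compact, self-similar geodesic metric spaces then identifies $\Y$ with a sub-Finsler Carnot group. At this point the hypothesis $n<5$ enters: by the classification of Carnot groups of Hausdorff dimension strictly less than $5$, the only possibilities are $(\R^m,\normdot)$ with $\normdot$ a norm (so $m=n$ is necessarily an integer) or the first Heisenberg group $\hei^1$ equipped with a sub-Finsler metric (Hausdorff dimension $4$). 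The paper's new failure-of-$\cd$ result on sub-Finsler Carnot groups rules out the Heisenberg case, since $\cd(0,N)$ would be inherited by $\Y$. Hence $n\in\N$ and $\Y$ is an $n$-dimensional Banach space, establishing the first and third bullets.

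Rectifiability then follows from Bate's theorem \cite{MR4506771}: a pointwise $n$-Ahlfors regular metric measure space whose tangent at $\m$-a.e.\ point is an $n$-dimensional normed space is $(\m,n)$-rectifiable. The main obstacle in the plan is the identification of $\Y$ with a sub-Finsler Carnot group using only the synthetic $\cd(0,N)$ bound, Ahlfors regularity, and self-similarity: in the $\RCD$ theory the analogous step is handled by iterated applications of the splitting theorem (as in \cite{MR3945743}), a tool unavailable in the general $\cd$ framework, so here the Carnot structure must be extracted purely from homogeneity and the synthetic curvature-dimension bound.
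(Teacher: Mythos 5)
Your proposal follows essentially the same route as the paper: identify the a.e.\ unique metric measure tangent as a sub-Finsler Carnot group carrying a multiple of $\mathcal H^n$, use $n<5$ together with the classification of low-dimensional Carnot groups and the failure of $\cd$ on sub-Finsler Heisenberg groups to force the tangent to be an $n$-dimensional Banach space, and conclude via Bate's criterion. One step, however, is misstated in a way that would make it fail if taken literally: there is no rigidity theorem identifying ``Ahlfors regular, locally compact, self-similar geodesic metric spaces'' with Carnot groups --- a two-dimensional Euclidean cone of angle less than $2\pi$ has all of these properties (with dilations fixing the vertex) and is not a Carnot group. The missing hypothesis is \emph{isometric homogeneity} of the tangent, and producing it is precisely the nontrivial content of Le Donne's theorem (Theorem \ref{thm:eld}), which the paper applies directly to $(\X,\di,\m)$ under hypothesis \emph{i)}: homogeneity is extracted from the $\m$-a.e.\ uniqueness of tangents via an iterated ``tangents of tangents are tangents'' argument, not from self-similarity at the base point alone. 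With Le Donne's theorem cited in that form your argument goes through. Separately, your final appeal to Bate compresses a genuine technical step: Theorem \ref{thm:bate} is stated for $\mathcal H^n_{|E}$ with $\mathcal H^n(E)<\infty$ and positive lower $\mathcal H^n$-density, so one must pass from $\m$ to $\mathcal H^n$; the paper does this by exhausting $\X$ with pieces on which $\m$ and $\mathcal H^n$ are mutually absolutely continuous (Lemma \ref{lem:gmt}) and then checking, at Lebesgue points of the density, that the two measures have the same pmG tangents. Your pointwise Ahlfors regularity makes this plausible but does not by itself deliver it.
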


Let us comment on the above statement. The two key assumptions at the core of our strategy are the $\m$-a.e.\;uniqueness of the Gromov-Hausdorff metric tangent, and the pointwise $n$-Ahlfors regularity of the reference measure.

On the one hand, the uniqueness of metric tangents may appear as a restrictive assumption; however, let us stress that it \emph{does not} rule out unrectifiable metric spaces, such as \sF Carnot groups.\ We remark that, to the best of our knowledge, there are no examples of $\cd(K,N)$ spaces having non-unique Gromov-Hausdorff metric tangents on a set of positive $\m$-measure. 

On the other hand, the Ahlfors regularity of the measure is not implied by the $\cd(K,N)$ condition, as shown by \cite{MR4402722,breaking}. This is in sharp contrast with $\RCD$ spaces, where it is a consequence of the constancy of dimension.

We conclude by observing that Theorem \ref{thm:main} can be adapted to accommodate for a stratification of $\X$, according its Ahlfors dimensions. However, differently from $\RCD$ spaces, where a stratification can be defined using the splitting theorem, cf.\ \cite{MR3945743}, in the generality of $\cd(K,N)$ spaces, an operative way to stratify $\X$ is less clear. See Remark \ref{rmk:stratification} for further details. 

\medskip 

Our second main result (Theorem \ref{thm:rectifiability_MCP}) instead gives a rectifiability result for ``non-collapsed'' $\MCP(K,N)$ metric measure spaces, without any restriction on the Ahlfors dimension. 

\begin{theorem}
\label{thm:main2}
    Given $K\in\R$ and $N\in (1,\infty)$, let $(\X,\di,\m)$ be an $\MCP(K,N)$ space. Assume that:  
    \begin{itemize}
        \item[i)] for $\m$-a.e.\ $x\in\X$, $\Tan_{{\rm pGH}}(\X, \di,x)$ contains a single element, up to isometry;
        \item[ii)] for $\m$-a.e.\ $x\in\X$, $0<\loden{N}(x)\leq \upden{N}(x)<\infty$. 
    \end{itemize}
    Then:
    \begin{itemize}
    \item  $N$ is an integer;
    \item $(\X,\di,\m)$ is $(\m,N)$-rectifiable;
    \item for $\m$-a.e.\ $x\in\X$, $\Tan_{{\rm pmGH}}(\X, \di,\m,x)$ contains a single element (up to isomorphism) and its metric part is isometric to a $N$-dimensional Banach space.
    \end{itemize}
\end{theorem}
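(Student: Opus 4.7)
The plan is to apply the rectifiability criterion of Bate \cite{MR4506771}: if $(\X,\di,\m)$ is pointwise doubling and at $\m$-a.e.\ $x$ the pointed measured Gromov--Hausdorff tangent is (isomorphic to) a $k$-dimensional Banach space, then $(\X,\di,\m)$ is $(\m,k)$-rectifiable. Non-collapsed $\MCP(K,N)$ yields the required pointwise doubling, so the whole task reduces to identifying $\m$-a.e.\ tangent of $(\X,\di,\m)$ with an $N$-dimensional Banach space; the remaining conclusions (integrality of $N$ and uniqueness of the measured tangent) will then follow immediately.

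First I would transfer the hypotheses to the tangent. At $\m$-a.e.\ $x\in\X$, hypothesis (ii) pins the tangent measure on the unique metric tangent $(\Y,\di_\Y,o)$ from (i) down to a multiple of $\mathcal H^N$, and a standard rescaling argument (the $\MCP(K,N)$ condition is scale-invariant at infinitesimal scales, producing $\MCP(0,N)$ in the limit) shows that the limiting m.m.\ space $(\Y,\di_\Y,\m_\Y)$ is a non-collapsed $\MCP(0,N)$ space.

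Next I would identify the geometry of the tangent. The $\m$-a.e.\ uniqueness of metric tangents, combined with the self-similarity built into taking the tangent at $o$ of the tangent itself, equips $(\Y,\di_\Y)$ with a one-parameter group of metric dilations. Together with the doubling from non-collapsed $\MCP(0,N)$ and a homogeneity argument that generates sufficiently many isometries (again exploiting the pointwise nature of (i) and (ii) to propagate the tangent structure across base points), a Mitchell--Le Donne type classification identifies $(\Y,\di_\Y)$ with a \sF Carnot group $\mathbb{G}$, the measure $\m_\Y$ being proportional to the Haar measure and $N$ equal to the homogeneous dimension of $\mathbb{G}$ (so in particular $N\in\N$).

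At this point the second main ingredient announced in the abstract---the failure of the non-collapsed $\MCP$ condition on \sF Carnot groups---forces $\mathbb{G}$ to be abelian, whence $(\Y,\di_\Y)\cong(\R^N,\|\cdot\|)$ for some norm, $\m_\Y$ is a multiple of Lebesgue measure, and Bate's theorem closes the proof. The principal difficulty I foresee lies in the classification step: unlike in the $\RCD$ setting, no splitting theorem is available under $\MCP$, so the identification of the tangent as a \sF Carnot group must be extracted directly from the dilation structure and from a careful manufacture of transitive isometries out of $\m$-a.e.\ uniqueness of tangents. Once that is in place, the new $\MCP$ obstruction excludes every \sF Carnot group except $(\R^N,\|\cdot\|)$, and the remaining assertions follow, the uniqueness of $\Tan_{\mathrm{pmGH}}$ coming from uniqueness of $\Tan_{\mathrm{pGH}}$ combined with non-collapsedness fixing the tangent measure up to a multiplicative constant.
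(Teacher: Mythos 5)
Your proposal follows essentially the same route as the paper: reduce to Bate's tangent criterion, use the pointwise Ahlfors regularity to force the tangent measure to be (a multiple of) $\mathcal H^N$, invoke the Le Donne classification of unique metric tangents as \sF Carnot groups of homogeneous dimension $N$ (whence $N\in\N$), and then apply the new obstruction that non-commutative \sF Carnot groups fail non-collapsed $\MCP(0,N)$ to conclude the tangent is a Banach space. The step you flag as the principal difficulty is exactly what the paper imports as a black box from \cite{MR2865538}, and the remaining technical work in the paper (the equivalence of $\m$ with $\mathcal H^N_{|A_i^k}$ on exhausting pieces, needed to apply Bate's theorem as stated) is the only ingredient your sketch glosses over.
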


In the above statement, the non-collapsing condition of $(\X,\di,\m)$ corresponds to assumption {\slshape ii)}, stating that the Ahlfors dimension coincides with $N$, which in turn plays the role of a synthetic upper bound on the dimension of $X$ in the $\MCP(K,N)$ condition.

What makes Theorem \ref{thm:main2} particularly interesting is that there are plenty of examples of $\MCP(K,N)$ spaces with a.e.\;unique metric tangents, which are not rectifiable (such as the \sr Heisenberg group). Thus, remarkably, the non-collapsing assumption {\slshape ii)} in Theorem \ref{thm:main2} is enough to rule out these examples and prove rectifiability. See Remark \ref{Rem:AfterThm1.2} for more details.

\paragraph{Strategy of the proofs.} The strategy for the proof of Theorem \ref{thm:main} is based on the following result proven in \cite{magnabosco2023failure,borza2024measure} about \sF Carnot groups of low Hausdorff dimension. Recall that a sub-Finsler Carnot group is a stratified, nilpotent Lie group equipped with a left-invariant distance. In the following, we denote the Hausdorff measure in dimension $n$  by $\mathcal H^n$.

\begin{theorem}[{\cite{magnabosco2023failure,borza2024measure}}]
\label{thm:aux_intro}
    Let $(G,\di_{SF})$ be a \sF Carnot group of Hausdorff dimension $n\in(0,5)$. If $(G,\di_{SF},\mathcal H^n)$ satisfies the $\cd(0,N)$ condition for some $N > 1$, then $G$ is a finite-dimensional Banach space.
\end{theorem}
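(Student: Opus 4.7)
The first step is a classification of sub-Finsler Carnot groups of Hausdorff dimension strictly less than $5$. If $\g = V_1 \oplus \cdots \oplus V_s$ is the stratification of the Lie algebra of $G$, then the sub-Finsler Hausdorff dimension is $Q = \sum_{i=1}^s i \dim V_i$. When $s = 1$ the group is abelian and $(G,\di_{SF})$ is already a finite-dimensional Banach space. When $s \geq 2$ one has $\dim V_1 \geq 2$ (since a $1$-dimensional first layer cannot generate any bracket) and $\dim V_j \geq 1$ for each $j \leq s$, so $s = 3$ already forces $Q \geq 2 + 2 + 3 = 7$; this leaves $s = 2$. Combining the bracket bound $\dim V_2 \leq \binom{\dim V_1}{2}$ with the constraint $Q = \dim V_1 + 2\dim V_2 < 5$ leaves the unique possibility $\dim V_1 = 2$, $\dim V_2 = 1$. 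The corresponding group is the first Heisenberg group $\hei$, equipped with a left-invariant sub-Finsler distance coming from some norm $\norm{\cdot}$ on the horizontal layer $V_1 \cong \R^2$.

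It therefore suffices to show that, for every such norm, the sub-Finsler Heisenberg $(\hei,\di_{SF},\mathcal H^4)$ does not satisfy $\cd(0,N)$ for any $N > 1$, and here I would follow the strategy of \cite{magnabosco2023failure,borza2024measure} and argue by contradiction against the Brunn--Minkowski inequality $\bm(0,N)$ implied by $\cd(0,N)$. Concretely, one constructs two Borel sets $A, B \subset \hei$ of positive finite measure, estimates the $\mathcal H^4$-volume of the $t$-intermediate set $M_t(A,B)$ obtained by running geodesics in the Wasserstein space between them, and aims to show that for an appropriate choice of $A$, $B$ and $t \in (0,1)$ one has
\begin{equation}
    \mathcal H^4\bigl(M_t(A,B)\bigr)^{1/N} < (1-t)\,\mathcal H^4(A)^{1/N} + t\,\mathcal H^4(B)^{1/N},
\end{equation}
which contradicts $\bm(0,N)$. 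The essential input is the explicit description of normal geodesics from the identity in the sub-Finsler Heisenberg: their horizontal projections are arcs traced at unit speed in the polar dual norm, while the vertical coordinate equals twice the signed area swept by the projection and thus depends quadratically on time. Taking $A$ as a thin horizontally-oriented slab near the identity and $B$ as a vertical translate of $A$ by a suitably small height, one can expand the midpoint map to leading order and exhibit a volume contraction factor that beats every admissible $\cd(0,N)$ distortion coefficient.

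The main obstacle is the lack of smoothness of a general sub-Finsler structure: the dual unit ball may be merely convex, so the end-point map of the exponential is only Lipschitz and one cannot differentiate it indiscriminately as one does for sub-Riemannian Heisenberg in Juillet's original argument. The way around this, developed in \cite{borza2024measure}, is to localize the test sets near a point of the dual unit sphere admitting a second-order Alexandrov expansion (such points have full measure on the sphere by Alexandrov's theorem) and to exploit the nilpotent bracket structure to isolate a quadratic-in-$t$ contribution to the Jacobian of the midpoint map coming from the vertical coordinate; this is the term that forces the Brunn--Minkowski deficit to beat the distortion as $t \to 0^+$. Abnormal geodesics do not affect the argument in $\hei$, since every horizontal curve in the Heisenberg group is normal, so the entire proof reduces to this careful first-order analysis of the volume expansion along normal geodesics.
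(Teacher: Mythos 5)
Your proposal is correct and follows essentially the same route as the paper: a classification of Carnot groups of homogeneous (= Hausdorff) dimension less than $5$ reducing everything to abelian groups and the sub-Finsler Heisenberg group (the paper's Theorem \ref{thm:classification} and Corollary \ref{cor:classification}), combined with the failure of $\cd(K,N)$ in every sub-Finsler Heisenberg group (the paper's Theorem \ref{thm:nocd_hei}, quoted from \cite{borza2024measure}). Your additional sketch of the Brunn--Minkowski contradiction is a faithful outline of the cited external proof rather than a divergence from the paper's argument.
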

     
\noindent Then, we proceed as follows: by the assumption on the uniqueness of Gromov-Hausdorff metric tangents, a result by Le Donne \cite{MR2865538} ensures that at $\m$-a.e.\;$x\in \X$ the unique tangent space is a \sF Carnot group. Additionally, the pointwise $n$-Ahlfors regularity of $\m$ guarantees that all tangent measures are uniformly $n$-Ahlfors regular as well (see Proposition \ref{prop:tan=tan}). Consequently, at $\m$-a.e.\ $x\in\X$, we find that a metric measure tangent space of $(\X,\di,\m)$ is a \sF Carnot group of Hausdorff dimension $n\in (0,5)$, equipped with $\mathcal H^n$. Invoking the stability of the $\cd(K,N)$ condition under (pointed) measured Gromov-Hausdorff convergence, we find that this \sF Carnot group (equipped with the Hausdorff measure) is also a $\cd(0,N)$ space. Thus,  Theorem \ref{thm:aux_intro} forces such a \sF Carnot group to be a finite-dimensional Banach space. Finally, the rectifiability of $\X$ follows by the characterization of Bate  \cite{MR4506771}, stating that a subset $E\subset\X$ is $n$-rectifiable if and only if the (pointed) Gromov-Hausdorff metric measure tangents are finite-dimensional Banach spaces at almost every point of $E$.

\medskip

The aforementioned strategy is quite flexible: on the one hand, it can be used to prove rectifiability for any Ahlfors dimension (see Theorem \ref{thm:meta_rectifiability}), provided that the following conjecture on the validity of $\cd$ condition in \sF Carnot groups is verified. 

\begin{conj*}[Conjecture \ref{conj:carnot_nocd}]
     A \sF Carnot group which, equipped with the Hausdorff measure, satisfies the $\cd(0,N)$ condition for some $N > 1$, is a finite-dimensional Banach space.
\end{conj*}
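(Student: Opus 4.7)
The plan is to show that any \sF Carnot group $(G,\di_{SF})$ of step $s\geq 2$, equipped with its Hausdorff measure $\mathcal H^Q$ (where $Q$ is the homogeneous dimension), violates the Brunn--Minkowski inequality $\mathsf{BM}(0,N)$ for every $N>1$; since $\cd(0,N)$ implies $\mathsf{BM}(0,N)$, this would yield the conjecture. The key structural input is the Carnot dilation $\delta_\lambda$, which scales $\di_{SF}$ homogeneously by $\lambda$ and $\mathcal H^Q$ by $\lambda^Q$, together with the fact that in a genuine Carnot group the homogeneous dimension $Q$ strictly exceeds the topological dimension, forcing a rigid anisotropy between horizontal and non-horizontal directions.

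First, I would reduce the problem to a universal local obstruction by exploiting left-invariance and self-similarity under $\delta_\lambda$: it suffices to exhibit one pair of sets $A_0,A_1$ whose $t$-intermediate set $A_t$ violates
\[
\mathcal H^Q(A_t)^{1/N}\geq (1-t)\,\mathcal H^Q(A_0)^{1/N}+ t\,\mathcal H^Q(A_1)^{1/N}.
\]
The natural candidates are thin ``boxes'' aligned with the first layer of the stratification, centered at two points whose connecting horizontal geodesic has a non-trivial projection onto higher layers. The ball--box theorem of Nagel--Stein--Wainger then provides sharp two-sided bounds for $\mathcal H^Q$ on such boxes, while the dilation structure reduces the computation of $\mathcal H^Q(A_t)$ to a scale-free estimate.

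The crucial step is to isolate the mechanism behind the case-by-case proofs of \cite{magnabosco2023failure,borza2024measure} and upgrade it to a universal Jacobian-type bound on the midpoint map along non-horizontal geodesics, coming from the anisotropic scaling of the endpoint map in each layer of the stratification. Once such a bound is available, combining it with the $\delta_\lambda$-homogeneity should force $\mathcal H^Q(A_t)$ to contract faster than $t^Q$ as $t\to 0$, which cannot be reconciled with any $\mathsf{BM}(0,N)$ estimate. An alternative formulation of this step, which may be more robust, is to work directly at the level of the entropy functional: construct a Wasserstein geodesic between suitably rescaled uniform measures and compute the leading-order expansion of $\Ent$ along it, showing that the second variation is too negative to be compatible with $(0,N)$-convexity.

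The main obstacle, and presumably the reason this remains a conjecture, is the absence of an explicit description of length-minimizing curves in general \sF Carnot groups of step $\geq 3$, due to possible abnormal extremals, together with the lack of regularity of the sub-Finsler unit ball on the first layer. Bypassing these issues requires either a purely Wasserstein-level argument that does not rely on individual geodesics---exploiting how optimal plans and the action functional transform under $\delta_\lambda$---or a robust measure-theoretic substitute for the ball--box theorem that is insensitive to the singular structure of the cut locus.
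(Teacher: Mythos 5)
This statement is Conjecture \ref{conj:carnot_nocd}: the paper does not prove it and explicitly leaves it open. What the paper does establish is the special case of homogeneous dimension $\mathcal Q<5$ (Theorem \ref{thm:aux_intro}, imported from \cite{magnabosco2023failure,borza2024measure}): by the classification of low-dimensional Carnot groups (Corollary \ref{cor:classification}), a non-commutative \sF Carnot group with $\mathcal Q\leq 4$ is isometric to a \sF Heisenberg group, and the failure of $\cd(K,N)$ there is a theorem of \cite{borza2024measure}. Your proposal must therefore be judged as an attempted proof of an open problem, and as such it contains a genuine gap rather than a complete argument.

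The gap sits exactly where you place the ``crucial step''. The reduction from $\cd(0,N)$ to a Brunn--Minkowski inequality, the use of dilations and left-invariance, and the ball--box comparability of $\mathcal H^{\mathcal Q}$ are all sound, but the claimed universal Jacobian-type bound on the midpoint map along non-horizontal geodesics is asserted, not proved, and it is precisely the ingredient that is unavailable in general: for step $\geq 3$ there is no description of minimizers (abnormal extremals), and for non-strictly-convex reference norms the midpoint correspondence is badly multivalued. The paper's Remark \ref{rmk:final_rmk} gives a concrete warning: in the quasi-Heisenberg group $\R\times\hei$ with the norm $|x_1|+\sqrt{x_2^2+x_3^2}$ on the first layer, the $t$-midpoint set $M_t(\e,\bar B(\e,1))$ equals the entire ball $\bar B(\e,t)$ for every $t$, so volume estimates on intermediate sets can be completely non-rigid and the BM/midpoint-contraction contradiction you aim for simply does not materialize in that case. (This is why, for the analogous $\MCP$ statement, Theorem \ref{thm:noMCP} has to contradict the measure-valued definition of $\MCP(0,N)$ via an iterated blow-up of optimal geodesic plans, rather than any set-contraction estimate.) Your fallback suggestion of working at the level of the entropy along Wasserstein geodesics is closer in spirit to what a proof would need, but as written it is a research program, not an argument.
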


On the other hand, the very same strategy is used to prove Theorem \ref{thm:main2}. In this case, the non-collapsing assumption guarantees that, at almost every point, the unique Gromov-Hausdorff metric tangent is a \sF Carnot group of Hausodorff dimension $N$ and satisfying the $\MCP(0,N)$ property. Then, we conclude rectifiability, exploiting the following result of independent interest, see Theorem \ref{thm:noMCP}. 

\begin{theorem}
    \label{thm:main3}
    Let $(G, \di_{SF})$ be a sub-Finsler Carnot group with homogeneous dimension $N$. Then, either $G$ is commutative or $(G, \di_{SF}, \mathcal{H}^N)$ does not satisfy the $\MCP(0,N)$ condition.    
\end{theorem}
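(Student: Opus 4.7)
My plan is to obstruct $\MCP(0,N)$ by a pointwise Jacobian computation on the geodesic contraction map. Assume $G$ is non-commutative, so the stratification $\g = V_1 \oplus \cdots \oplus V_s$ has $V_i \neq \{0\}$ for some $i \geq 2$, and suppose for contradiction that $(G,\di_{SF},\mathcal{H}^N)$ satisfies $\MCP(0,N)$. Working at the identity $o \in G$: for $\mathcal{H}^N$-a.e.\ $x$ the unit-speed length-minimizing geodesic from $o$ to $x$ is unique and of normal type, with initial covector $\lambda(x) \in T_o^* G$. Setting $\Phi_t(x) := \gamma^x(t)$, a change-of-variables applied to the dynamical $\MCP$ plan yields the equivalent pointwise Jacobian bound $|\det D\Phi_t(x)| \geq t^N$ for $\mathcal{H}^N$-a.e.\ $x$ and every $t \in (0,1]$.

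I would then compute the small-$t$ asymptotics of $|\det D\Phi_t(x)|$ via two natural scalings on $T^*G$. By degree-$2$ homogeneity of the sub-Finsler Hamiltonian, the exponential map $E_s(\lambda) := \gamma_\lambda(s)$ satisfies $E_t(\lambda) = E_1(t\lambda)$, hence
\[|\det D\Phi_t(x)| = t^{\dim G}\,\frac{|\det DE_1(t\lambda)|}{|\det DE_1(\lambda)|}.\]
Second, the Carnot dilation $\delta_r$ has a natural cotangent lift $D_r$ acting on $V_i^*$ by $r^{2-i}$ — chosen to make the sub-Finsler Hamiltonian $r^2$-homogeneous — giving the covariance $E_1(D_r\mu) = \delta_r E_1(\mu)$ and thus $|\det DE_1(D_r\mu)| = r^{2(N-\dim G)}|\det DE_1(\mu)|$. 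Factoring $t\lambda = D_t\,\mu(t)$ with $\mu(t)|_{V_i^*} = t^{i-1}\lambda|_{V_i^*}$, so $\mu(t) \to (\lambda_1, 0, \ldots, 0)$ as $t \to 0$, produces
\[|\det DE_1(t\lambda)| \sim C(\lambda)\,t^{2(N-\dim G)}\qquad\text{as } t\to 0^+,\]
with $C(\lambda) = |\det DE_1(\lambda_1, 0, \ldots, 0)| > 0$ for $\lambda_1$ outside a singular set of measure zero.

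Combining, $|\det D\Phi_t(x)| \sim C'(\lambda(x))\,t^{2N-\dim G}$. The exponent equals the geodesic dimension $N_{\mathrm{geo}} := \sum_{i=1}^s (2i-1)\dim V_i = 2N - \dim G$. Non-commutativity forces $N_{\mathrm{geo}} - N = \sum_{i\geq 2}(i-1)\dim V_i \geq 1$, so $|\det D\Phi_t(x)|\,t^{-N} \to 0$ as $t \to 0$ on a set of positive $\mathcal{H}^N$-measure, contradicting the Jacobian lower bound and hence $\MCP(0,N)$.

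The main obstacle is the analytic regularity of the exponential map in the sub-Finsler setting: the norm on $V_1$ need only be Lipschitz, so the Hamiltonian may fail to be smooth everywhere. I would circumvent this by restricting to the dense, full-measure set of regular covectors on which the sub-Finsler dual norm on $V_1^*$ is $C^\infty$ and strongly convex, where the Hamilton flow is smooth and the above cotangent-scaling computation carries through as in the sub-Riemannian framework of Agrachev--Barilari--Rizzi.
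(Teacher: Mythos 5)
Your route is genuinely different from the paper's: you aim for the Agrachev--Barilari--Rizzi ``geodesic dimension'' contradiction ($N_{\mathrm{geo}}=2N-\dim G>N$ for non-commutative $G$), reducing $\MCP(0,N)$ to the pointwise Jacobian bound $|\det D\Phi_t|\geq t^N$ along the normal exponential map. This is essentially how the known sub-Riemannian special case is proved (cf.\ \cite[Thm.\ D]{MR3852258}, which the paper cites precisely as the previously known result that Theorem \ref{thm:main3} extends), but it does not survive the passage to general sub-Finsler reference norms, and the two places where it breaks are exactly the ones your last paragraph tries to dispose of.

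First, reducing $\MCP(0,N)$ to a Jacobian bound for a map $\Phi_t$ requires that for $\mathcal H^N$-a.e.\ $x$ the geodesic from $\e$ to $x$ be unique and normal, so that every admissible plan $\eta$ in Definition \ref{def:mcp} is induced by $x\mapsto\gamma^x$. For a non-strictly-convex norm (e.g.\ the quasi-Heisenberg group of Remark \ref{rmk:final_rmk}, or any $\ell^1$-type norm on $V_1$) geodesics from the identity are non-unique on a set of full measure, $\MCP$ only asks for the existence of \emph{some} plan, and that plan need not be concentrated on normal Pontryagin extremals; your pointwise inequality is then simply not a consequence of the hypothesis. Second, your proposed fix --- restricting to a full-measure set of covectors where the dual norm is $C^\infty$ and strongly convex --- is vacuous for polyhedral norms: the dual of an $\ell^1$-type norm is $\ell^\infty$-type, smooth off a null set but with identically degenerate Hessian there, so strong convexity fails on every open set and the Hamiltonian-flow machinery (including the non-degeneracy $|\det DE_1(\lambda_1,0,\dots,0)|>0$) is unavailable. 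Remark \ref{rmk:final_rmk} shows how delicate this is: in the quasi-Heisenberg group one even has $M_t(\e,\bar B(\e,1))=\bar B(\e,t)$ for all $t$, so no volume estimate on midpoint sets can yield the contradiction. This is why the paper argues at the level of the plan itself: it upgrades $\MCP(0,N)$ to the rigidity $\mu_t=\mathcal H^N(\bar B(\e,t))^{-1}\mathcal H^N|_{\bar B(\e,t)}$, blows up the plan (Theorem \ref{thm:HLD} and Lemma \ref{lem:closetoGeo}) to obtain one concentrated on geodesics projecting to geodesics of the abelianisation, and shows such curves are trapped in a cone $\mathscr{C}_K$ missing a positive-measure part of $\bar B(\e,t)$. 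To salvage your approach you would need either to restrict to smooth, strongly convex norms, or to supply a genuinely new argument handling non-unique and possibly abnormal geodesics.
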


Theorem \ref{thm:main3} shows that (non-commutative) \sF Carnot groups cannot be non-collapsed $\MCP(0,N)$ spaces. An equivalent formulation is that, for a  \sF Carnot group $(G,\di_{SF},\mathcal H^N)$, it must hold
\begin{equation}
    N_{\text{curv}}=N\qquad\Longleftrightarrow\qquad G \text{ is commutative,}
\end{equation}
where the curvature exponent $N_{\text{curv}}$ is the optimal dimensional parameter for which $\MCP$ holds. In this regard, Theorem \ref{thm:main3} extends known results for \sr Carnot groups, where the curvature exponent is always greater than or equal to the Hausdorff dimension plus one, see \cite{MR3852258}.

The proof of Theorem \ref{thm:main3} relies on a careful blow-up procedure, inspired by \cite{EeroELD}. Firstly, if the non-collapsed $\MCP$ holds, given an optimal geodesic plan connecting the Dirac's mass centered at the origin $\delta_\e$ and the uniform measure on the unit ball $\mu$, its blow-up is again a geodesic plan with the same marginals. Secondly, up to iterating the blow-up procedure, we show that there exists an optimal geodesic plan connecting $\delta_\e$ and $\mu$, which is concentrated on a set of geodesics that project to geodesics of the abelianisation of $G$.  However, the latter set contains geodesics that, when evaluated at time $t\in (0,1)$, cover the ball of radius $t$ if and only if $G$ is commutative.

Note that there are examples of \sF Carnot groups where the set of $t$-midpoints $M_t(\e, \bar B(\e,1))$ between the identity element $\e$ and the unit ball $\bar B(\e,1)$ coincides with the rescaled ball for every $t\in [0,1]$. Thus,
        \begin{equation}
             \mathcal H^N(M_t(\e,\bar B(\e,1)))=\mathcal H^N(\bar B(\e,t) )=t^N\mathcal H^N(\bar B(\e,1)),\qquad\forall\,t\in [0,1].
        \end{equation}
For this reason, in the proof of Theorem \ref{thm:main3},  we contradict directly the definition of $\MCP(0,N)$, rather than its standard implication on the contraction rate of the set of $t$-midpoints. See Remark \ref{rmk:final_rmk} for details.

\paragraph{Structure of the paper.} In Section \ref{sec:prelim}, we give the necessary preliminaries on $\cd$ and $\MCP$ spaces and \sF Carnot groups. In Section \ref{sec:meta_rectifiability}, we detail the general strategy for rectifiability and prove Theorem \ref{thm:main}. Moreover, we show how to deduce the rectifiability if Conjecture \ref{conj:carnot_nocd} is verified; see Theorem \ref{thm:meta_rectifiability}. Section \ref{sec:rectifiability_non-collapsed_mcp} is devoted to the proof of Theorem \ref{thm:main3}, from which we deduce Theorem \ref{thm:main2}.

\subsection*{Acknowledgments}  
The authors thank Camillo Brena and Enrico Pasqualetto for discussions on metric rectifiability.

M.\,M.\;ac\-knowl\-edges support from the Royal Society through the Newton International Fellowship (award number:\ NIF$\backslash$R1$\backslash$231659). A.\,M.\;acknowledges support from the European Research Council (ERC) under the European Union's Horizon 2020 research and innovation programme, grant agreement No.\;802689 ``CURVATURE''.  T.\,R.\;acknowledges support from the ANR-DFG project ``CoRoMo'' (ANR-22-CE92-0077-01).

Part of this research was carried out while M.\,M.\;and T.\,R.\;were hosted at the Oberwolfach Research Institute for Mathematics as Oberwolfach Research Fellows. They thank the MFO for providing a stimulating and fruitful research environment.\ Part of this research was carried out at the Hausdorff Institute of Mathematics in Bonn, during the trimester program  ``Metric Analysis''. The authors wish to express their appreciation to the institution for the stimulating atmosphere, and they ackonowledge support  by the Deutsche Forschungsgemeinschaft (DFG, German Research Foundation) under Germany's Excellence Strategy – EXC-2047/1 – 390685813.

For the purpose of Open Access, the authors have applied a CC BY public copyright licence to any Author Accepted Manuscript (AAM) version arising from this submission.

\section{Preliminaries}
\label{sec:prelim}

\subsection{Synthetic Curvature-Dimension bounds}

A metric measure space is a triple $(\X,\di,\m)$ where $(\X,\di)$ is a complete and separable metric space and $\m$ is a non-null Borel measure on it, finite on compact sets. In the following, $C([0, 1], \X)$ is the space of continuous curves from $[0, 1]$ to $\X$. A curve $\gamma\in C([0, 1], \X)$ is said to be \emph{a geodesic} if 
\begin{equation}
    \di(\gamma(s), \gamma(t)) = |t-s| \cdot  \di(\gamma(0), \gamma(1)) \qquad \forall s,t\in[0,1].
\end{equation}
We denote by $\Geo(\X)$ the set of geodesics of $(\X,\di)$. The metric space $(\X,\di)$ is said to be \emph{geodesic} if every pair of points $x,y \in \X$ can be connected with a curve $\gamma\in \Geo(\X)$. 
For any $t \in [0, 1]$, we define the evaluation map $e_t \colon C([0, 1], \X) \to \X$ by setting $e_t(\gamma) := \gamma(t)$.

We denote by $\Prob(\X)$ the set of Borel probability measures on $\X$ and by $\Prob_2(\X) \subset \Prob(\X)$ the set of those having finite second moment. We endow $\Prob_2(\X)$ with the Wasserstein distance $W_2$: %, defined by
\begin{equation}
\label{eq:defW2}
    W_2(\mu_0, \mu_1) := \inf_{\pi \in \mathsf{Adm}(\mu_0,\mu_1)}  \left( \int \di^2(x, y) \, \de \pi(x, y)\right)^{1/2},
\end{equation}
where $\mathsf{Adm}(\mu_0, \mu_1)$ is the set of all the admissible transport plans between $\mu_0$ and $\mu_1$, namely all the measures in $\Prob(\X\times\X)$ such that $(\p_1)_\# \pi = \mu_0$ and $(\p_2)_\#\pi = \mu_1$. We denoted by $\p_i:X\times X \to X$, $i=1,2$, the projection on the $i^{th}$ coordinate, and by $(\p_i)_\#: \Prob(\X\times\X)\to  \Prob(\X) $ the associated push-forward map.   The infimum in \eqref{eq:defW2} is always attained and the measures in $\mathsf{Adm}(\mu_0, \mu_1)$ realising the minimum are called optimal transport plans. The metric space $(\Prob_2(\X),W_2)$ is itself complete and separable, moreover, if $(\X,\di)$ is geodesic, then $(\Prob_2(\X),W_2)$ is geodesic as well. Furthermore, every geodesic $(\mu_t)_{t\in [0,1]}$ in $(\Prob_2(\X),W_2)$ can be represented with a measure $\eta \in \Prob(\Geo(\X))$, meaning that $\mu_t = (e_t)_\# \eta$. We refer the reader to \cite{MR4294651} for more details.

With these tools, one can define the $\cd(K,N)$ condition, pioneered by Sturm and Lott--Villani \cite{sturm2006-1,sturm2006,lott--villani2009}, and the measure contraction property $\MCP(K,N)$, introduced by Ohta in \cite{ohta2007}. These conditions aim to generalize, to the context of metric measure spaces, the notion of having Ricci curvature bounded from below by $K\in\R$ and dimension bounded above by $N \in (1,\infty)$. Both consist in requiring a suitable $(K,N)$-convexity property of the so-called R\'enyi entropy functionals.

The distortion coefficients are defined as follows: for every $K \in \R$ and $N\in (1,\infty)$, set
\begin{equation}\label{eq:tau}
    \tau_{K,N}^{(t)}(\theta):=t^{\frac{1}{N}}\left[\sigma_{K, N-1}^{(t)}(\theta)\right]^{1-\frac{1}{N}},\qquad \forall\,t\in[0,1],\  \forall\,\theta\in[0,+\infty),
\end{equation}
where
\begin{equation}
\sigma_{K,N}^{(t)}(\theta):= 
\begin{cases}

\displaystyle  \frac{\sin(t\theta\sqrt{K/N})}{\sin(\theta\sqrt{K/N})} & \textrm{if}\  N\pi^{2} > K\theta^{2} >  0, \crcr
t & \textrm{if}\ %K \theta^{2}<0 \ \textrm{and}\ N=0, \ \textrm{or  if}\ 
K =0,  \crcr
\displaystyle   \frac{\sinh(t\theta\sqrt{-K/N})}{\sinh(\theta\sqrt{-K/N})} & \textrm{if}\ K < 0.
\end{cases}
\end{equation}

\begin{definition}\label{def:CD}
Given $K\in \R$ and $N\in (1,\infty)$, a metric measure space $(\X,\di,\m)$ is said to be a $\cd(K,N)$ space (or to satisfy the $\cd(K,N)$ condition) if for every pair of measures $\mu_0=\rho_0\m,\mu_1= \rho_1 \m \in \Prob_2(\X)$, absolutely continuous with respect to $\m$, there exists a $W_2$-geodesic $(\mu_t)_{t\in [0,1]}$ connecting them and induced by $\eta \in \Prob(\Geo(\X))$, such that for every $t\in [0,1]$, $\mu_t =\rho_t \m \ll \m$ and the following inequality holds for every $N'\geq N$ and every $t \in [0,1]$:
\begin{equation}\label{eq:CDcond}
    \int_\X \rho_t^{1-\frac 1{N'}} \de \m \geq \int_{\X \times \X} \Big[ \tau^{(1-t)}_{K,N'} \big(\di(x,y) \big) \rho_{0}(x)^{-\frac{1}{N'}} +    \tau^{(t)}_{K,N'} \big(\di(x,y) \big) \rho_{1}(y)^{-\frac{1}{N'}} \Big]    \de\pi( x,y),
\end{equation}
where $\pi= (e_0,e_1)_\# \eta$.
\end{definition}

 The heuristic idea behind the measure contraction property $\MCP(K,N)$ is to require the $\cd(K,N)$ condition to hold when the first marginal degenerates to a delta-measure $\delta_x$, and the second marginal is $\frac{\m|_A}{\m(A)}$, for some Borel set $A\subset\X$ with $0<\m(A)<\infty$.

\begin{definition}
\label{def:mcp}
    Given $K\in\R$ and $N\in (1,\infty)$, a metric measure space $(\X,\di,\m)$ is said to satisfy the measure contraction property $\mathsf{MCP}(K,N)$ if for every $x\in\text{supp}(\m)$ and every Borel set $A\subset\X$ with $0<\m(A)<\infty$, there exists a Wasserstein geodesic, induced by some $\eta \in \Prob(\Geo(\X))$,  connecting $\delta_x$ and $\frac{\m|_A}{\m(A)}$ such that, for every $t\in[0,1]$,
    \begin{equation}
    \label{eq:mcp_def}
        \frac{1}{\m(A)}\m\geq(e_t)_\#\Big(\tau_{K,N}^{(t)}\big(\di(\gamma(0),\gamma(1))\big)^N\eta(\text{d}\gamma)\Big).
    \end{equation}
\end{definition}

 For every $K\in \R$ and every $N\in (1,\infty)$, the $\cd(K,N)$ condition implies $\MCP(K,N)$ and the latter is strictly weaker. Furthermore, we mention here that $\MCP(K,N)$ (and thus $\cd(K,N)$) implies a sharp Bishop--Gromov inequality on $(\X,\di,\m)$, which in turn implies that $\m$ is locally uniformly doubling on its support, i.e.\ for every $R>0$, there exists $C=C(R)>0$ such that 
    \begin{equation}
    \label{eq:loc_unif_doubling}
        \m(B(x,2r))\leq C\m(B(x,r)),\qquad\forall\,x\in \supp(\m),\ \forall\,r\in[0,R].
    \end{equation}

\begin{remark}
    If $(\X,\di,\m)$ is a metric measure space satisfying $\MCP(K,N)$ (or $\cd(K,N)$) for some $K\in\R$ and $N\in (1,\infty)$, then, $(\X,\di)$ is a proper, geodesic metric space, see for instance \cite{sturm2006}.
\end{remark}

\subsection{Sub-Finsler Carnot groups}\label{subsection:carnot}

We recall some basic facts on \sF Carnot groups. For a complete account, we refer the reader to the monograph \cite{libroELD}. 

Let $(G,\cdot)$ be a simply connected Lie group, with identity element $\e$. We say that $G$ is a Carnot group if its Lie algebra $\g := T_\e G$ admits a stratification 
\begin{equation}
\label{eq:stratification}
    \g = V_1 \oplus\ldots\oplus V_\s, 
\end{equation} 
i.e., $V_1, \ldots , V_\s\subset\g$ are subspaces in direct sum such that $V_{j+1} = [V_j , V_1]$, with $V_{\s+1} := \{0\}$. The maximal number $\s\in\N$ for which $V_\s\neq\{0\}$  is called the \emph{step} of the Carnot group, while the rank is defined as $k:=\dim(V_1)$. Note that $\exp(V_\s)\subset Z(G)$, %this follows from the Baker-Campbell-Hausdorff formula and the fact that $V_\s\subset\g$
where $Z(G)$ is the centre of the group and $\exp:\g\to G$ denotes the exponential map of $G$. In addition, by definition, the first layer $V_1$ is bracket generating  and induces a left-invariant distribution $\dis\subset TG$:
\begin{equation}
\label{eq:left-inv_distribution}
    \dis_g := (L_g)_*V_1,\qquad\forall\,g \in G,
\end{equation}
where $L_g:G\to G$ is the multiplication on the left by the element $g\in G$. Fix a norm $\normdot$ on the vector space $V_1$. By left-translation, the norm on $V_1$ induces a norm on $\dis$. Indeed, if $V\in\dis_g$, then $V=(L_g)_*v$ for some $v\in V_1$, and then
\begin{equation}
    \|V\|_g:= \norm{(L_g)^{-1}_*V}=\norm{v}. 
\end{equation}
The associated \sF distance is defined by length-minimisation among admissible curves. More precisely, we say that $\gamma:[0,1]\to G$ is admissible (or horizontal) if it is absolutely continuous in coordinates and, for a.e.\ $t\in[0,1]$, $\dot\gamma(t)\in \dis_{\gamma(t)}$. Given an admissible curve, its length is 
\begin{equation}
    \ell(\gamma):=\int_0^1\norm{\dot\gamma(t)}_{\gamma(t)}\de t.
\end{equation}
Then, for every $p,q\in G$, the \sF distance between $p$ and $q$ is
\begin{equation}
    \di_{SF}(p, q) := \inf\{ \ell(\gamma) : \gamma:[0,1]\to G,\text{ admissible and such that } \gamma(0)=p, \gamma(1)=q\}.
\end{equation}
The metric space $(G,\di_{SF})$ is a \sF Carnot group and we refer to $\normdot$ as the reference norm of the \sF Carnot group.

\begin{remark}
\label{rmk:hom_dim}
    The topological dimension of $G$ is given by $n:=\sum_{i=0}^\s \dim(V_i)$, while its homogeneous dimension is $\mathcal Q:=\sum_{i=0}^\s i\dim(V_i)$. Note that the homogeneous dimension of a \sF Carnot group coincides with its Hausdorff dimension.
\end{remark}

\paragraph{Dilations on Carnot groups.}

Let $(G,\di_{SF})$ be a \sF Carnot group, with Lie algebra $\g$. Let $\tilde\delta_\lambda : \g \to \g$ be the dilation of factor $\lambda\in\R$ associated with the stratification \eqref{eq:stratification}, i.e.\ 
\begin{equation}
    \tilde\delta_{\lambda}v=\lambda^iv,\qquad\forall\,v\in V_i,\,i=1,\ldots,\s.
\end{equation} 
Then, the (anisotropic) dilation $\delta_\lambda : G \to G$ of the group of factor $\lambda$ is the only group automorphism such that $(\delta_\lambda)_* = \tilde\delta_\lambda$. By definition, for every $\lambda,\eta\in\R$, we have $\delta_\lambda \circ \exp = \exp\circ\,\tilde\delta_\lambda$ and $\delta_\lambda \circ\,\delta_\eta= \delta_{\lambda\eta}$. In addition, 
\begin{equation}\label{eq:commutation_product-dilation}
    \delta_\lambda (x \cdot y) = \delta_\lambda(x) \cdot \delta_\lambda(y), \qquad\forall\,x,y\in G.
\end{equation}
Finally, the \sF distance is $1$-homogeneous under the action of dilations.

\begin{prop}
    Let $(G,\di_{SF})$ be a \sF Carnot group and let $\lambda\in\R$. Then,
    \begin{equation}
        \di_{SF}(\delta_\lambda(x),\delta_\lambda(y)) = |\lambda|\,\di_{SF}(x,y), \qquad\forall\,x, y \in G,\,\lambda \in \R.       
    \end{equation}
    In particular, $\delta_{-1}:G\to G$ is a non-trivial isometry of $G$.
 \end{prop}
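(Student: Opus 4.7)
The plan is to prove the $1$-homogeneity of $\di_{SF}$ by showing that $\delta_\lambda$ induces a bijection on admissible curves that rescales length by exactly $|\lambda|$. The proof divides naturally into the case $\lambda\neq 0$, where $\delta_\lambda$ is a diffeomorphism of $G$, and the degenerate case $\lambda=0$, where $\delta_0$ collapses $G$ onto $\{\e\}$ and both sides of the claimed equality vanish.

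For the main step, I would fix an admissible curve $\gamma\colon[0,1]\to G$ from $x$ to $y$ and compute the velocity of $\delta_\lambda\circ\gamma$. Differentiating the identity $\delta_\lambda\circ L_g = L_{\delta_\lambda(g)}\circ\delta_\lambda$, which is a direct consequence of \eqref{eq:commutation_product-dilation}, yields $(\delta_\lambda)_*\circ(L_g)_* = (L_{\delta_\lambda(g)})_*\circ(\delta_\lambda)_*$. Writing $\dot\gamma(t)=(L_{\gamma(t)})_*v$ for some $v\in V_1$ (by admissibility), and using $(\delta_\lambda)_*=\tilde\delta_\lambda$ together with the fact that $\tilde\delta_\lambda v=\lambda v$ on $V_1$, the chain rule gives
\[
    (\delta_\lambda\circ\gamma)\dot{}\,(t) = (L_{\delta_\lambda(\gamma(t))})_*(\lambda v) \in \dis_{\delta_\lambda(\gamma(t))}.
\]
Hence $\delta_\lambda\circ\gamma$ is admissible from $\delta_\lambda(x)$ to $\delta_\lambda(y)$, and by left-invariance of $\normdot$,
\[
    \|(\delta_\lambda\circ\gamma)\dot{}\,(t)\|_{\delta_\lambda(\gamma(t))} = \|\lambda v\| = |\lambda|\,\|\dot\gamma(t)\|_{\gamma(t)},
\]
which integrates to $\ell(\delta_\lambda\circ\gamma)=|\lambda|\,\ell(\gamma)$.

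Next, I would conclude by a bijection argument. For $\lambda\neq 0$, $\delta_\lambda$ is a group automorphism with inverse $\delta_{1/\lambda}$, so the above computation, applied in both directions, shows that $\gamma\mapsto\delta_\lambda\circ\gamma$ is a bijection between admissible curves from $x$ to $y$ and admissible curves from $\delta_\lambda(x)$ to $\delta_\lambda(y)$, multiplying length by $|\lambda|$. Taking the infimum gives $\di_{SF}(\delta_\lambda(x),\delta_\lambda(y))=|\lambda|\di_{SF}(x,y)$. For the final assertion, the case $\lambda=-1$ yields that $\delta_{-1}$ is an isometry of $(G,\di_{SF})$; it is non-trivial because for any nonzero $v\in V_1$ one has $\delta_{-1}(\exp v)=\exp(-v)\neq\exp v$.

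The only mildly delicate point is the interplay between the pushforward of $\delta_\lambda$ and the left-translation, but this is entirely controlled by \eqref{eq:commutation_product-dilation} and by the definition $\tilde\delta_\lambda|_{V_1}=\lambda\,\mathrm{id}$. No regularity issues arise since admissible curves are, by definition, absolutely continuous and their velocities live in $V_1$ in trivialized coordinates, where the computation above is justified pointwise a.e.
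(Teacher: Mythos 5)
Your proof is correct and is the standard argument: the paper states this proposition without proof (it is a classical fact, deferred to the cited monograph on Carnot groups), and the expected justification is precisely the one you give — $\delta_\lambda$ maps admissible curves to admissible curves while rescaling the horizontal velocity by $\lambda$ via the intertwining $\delta_\lambda\circ L_g=L_{\delta_\lambda(g)}\circ\delta_\lambda$ and $\tilde\delta_\lambda|_{V_1}=\lambda\,\mathrm{id}$, followed by the bijection argument using $\delta_{1/\lambda}$ and the trivial case $\lambda=0$. The only cosmetic remark is that the vector $v$ in $\dot\gamma(t)=(L_{\gamma(t)})_*v$ depends (measurably) on $t$, which your pointwise-a.e.\ computation handles without issue.
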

 
\paragraph{The Heisenberg group \texorpdfstring{$\hei$}{H}.}

We recall here the first important example of a \sF Carnot group, namely the \sF Heisenberg group. Consider the Lie group $G=\R^3$, equipped with the non-commutative group law, defined by
\begin{equation}
    (x, y, z) \cdot (x', y', z') = \bigg(x+x',y+y',z+z'+\frac12(xy' - x'y)\bigg),\qquad\forall\,(x, y, z), (x', y', z')\in\R^3,
\end{equation}
with identity element $\e=(0,0,0)$. Define the left-invariant vector fields
\begin{equation}
    X:=\partial_x-\frac{y}2\partial_z,\qquad Y:=\partial_y+\frac{x}2\partial_z.
\end{equation}
Then, $[X,Y]=\partial_z=:Z$ and the Lie algebra of $G$ admits the stratification: 
\begin{equation}
    \g=V_1\oplus V_2,
\end{equation}
where $V_1=\text{span}\{X,Y\}$ and $V_2:=\text{span}\{Z\}$. The resulting Carnot group is the Heisenberg group $\hei$. If we equip $V_1$ with a norm $\normdot$, the metric space $(\hei,\di_{SF})$ is a \sF Heisenberg group. The homogeneous dimension of a \sF Heisenberg group is $\mathcal Q=4$, and thus its Hausdorff dimension is also equal to $4$, cf.\ Remark \ref{rmk:hom_dim}. Note that, the Heisenberg group is the unique non-commutative Carnot group with Hausdorff dimension equal to $4$ (see Theorem \ref{thm:classification} below). 

The curvature-dimension condition $\cd(K,N)$ is known to fail in every \sF Heisenberg group, according to \cite{borza2024measure} (see also \cite{magnabosco2023failure}). 

\begin{theorem}[{\cite[Thm.\ 1.6]{borza2024measure}}]
\label{thm:nocd_hei}
    Let $(\hei,\di_{SF})$ be a \sF Heisenberg group, equipped with a norm $\normdot$ and with a positive smooth measure $\m$. Then, the metric measure space $(\hei,\di_{SF},\m)$ does not satisfy the $\cd(K,N)$ condition, for every $K\in\R$ and $N\in (1,\infty)$.
\end{theorem}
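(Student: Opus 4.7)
The plan is to reduce Theorem \ref{thm:nocd_hei} to Theorem \ref{thm:aux_intro} through a blow-up argument at the origin $\e$, which eliminates the dependence on the specific smooth measure $\m$ and on the parameter $K$.

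Arguing by contradiction, assume $(\hei, \di_{SF}, \m)$ satisfies $\cd(K,N)$ for some $K \in \R$ and $N \in (1, \infty)$. For $r\in(0,1]$, consider the rescaled pointed metric measure spaces
\begin{equation}
    \X_r := \big(\hei,\, r^{-1}\di_{SF},\, c_r^{-1}\m,\, \e\big),\qquad c_r := \m\big(B_{\di_{SF}}(\e, r)\big).
\end{equation}
The self-similarity of $\di_{SF}$ gives that $\delta_{1/r}:(\hei, r^{-1}\di_{SF})\to(\hei,\di_{SF})$ is an isometry (so the underlying metric space is constant along the sequence up to isometry), while the smoothness and strict positivity of $\m$ near $\e$ make $\m$ locally Ahlfors regular of dimension $\mathcal{Q}=4$ at $\e$, the density of $\m$ with respect to a fixed left-invariant Haar measure being continuous and positive at $\e$. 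A direct computation based on the identity $\big(\delta_{1/r}\big)_*\m\big(B_{\di_{SF}}(\e,s)\big) = \m\big(B_{\di_{SF}}(\e,rs)\big)$ then shows that $\X_r$ converges in the pointed measured Gromov-Hausdorff sense to $\X_0 := (\hei, \di_{SF}, C\,\mathcal{H}^4, \e)$ for some constant $C > 0$, as $r\to 0^+$.

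On the other hand, the standard scaling rule for the $\cd$ condition - a direct consequence of the dependence of the coefficients in \eqref{eq:tau} on the product $\theta\sqrt{K/N}$ and of the invariance of $\cd$ under positive rescalings of the reference measure - gives $\X_r \in \cd(r^2 K, N)$ for every $r\in(0,1]$. Passing to the limit and invoking the stability of the $\cd$ condition under pmGH convergence, one concludes that $(\hei,\di_{SF},\mathcal{H}^4)$ satisfies $\cd(0,N)$ (the multiplicative constant $C$ being irrelevant). Since $\hei$ has Hausdorff dimension $\mathcal{Q}=4 \in (0,5)$, Theorem \ref{thm:aux_intro} then forces $\hei$ to be isometric to a finite-dimensional Banach space, contradicting its non-commutativity.

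The main technical subtlety is verifying the pmGH convergence of the rescaled measures $c_r^{-1}\m$ to $C\,\mathcal{H}^4$: one must use the smoothness and positivity of $\m$ to obtain the required uniform local density estimate at $\e$, and exploit the self-similar structure of $\di_{SF}$ to compare balls of different radii through the dilations $\delta_\lambda$. Once the convergence is in place, everything else is soft, resting on the stability of the $\cd$ condition and on the already available structural result of Theorem \ref{thm:aux_intro}.
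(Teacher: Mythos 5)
The paper does not prove this statement at all: it is imported verbatim as an external result, namely \cite[Thm.\ 1.6]{borza2024measure}. Your individual reduction steps are all correct and standard — the blow-up of a positive smooth measure at $\e$ is a constant multiple of the Haar measure $\mathcal H^4$, the scaling rule $\cd(K,N)\Rightarrow\cd(r^2K,N)$ under the rescaling $\di\mapsto r^{-1}\di$ holds, and the $\cd$ condition is stable under pmGH convergence with $K_r\to 0$. So you have correctly shown that the general case (arbitrary $K$, arbitrary positive smooth $\m$) follows from the special case $K=0$, $\m=\mathcal H^4$.

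The genuine gap is that your final step is circular. By the classification of low-dimensional Carnot groups (Theorem \ref{thm:classification} / Corollary \ref{cor:classification}), a non-commutative \sF Carnot group of Hausdorff dimension $n\in(0,5)$ is necessarily a \sF Heisenberg group with $n=\mathcal Q=4$; hence Theorem \ref{thm:aux_intro} is, in its only non-trivial case, \emph{exactly} the statement that $(\hei,\di_{SF},\mathcal H^4)$ is not $\cd(0,N)$ — i.e.\ the special case of Theorem \ref{thm:nocd_hei} to which you have just reduced. This is also how Theorem \ref{thm:aux_intro} is obtained in the cited literature, and it is why the body of the present paper invokes Corollary \ref{cor:classification} together with Theorem \ref{thm:nocd_hei} rather than Theorem \ref{thm:aux_intro} in the proof of Theorem \ref{thm:rectifiability<5}. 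Your argument therefore assumes the substance of what is to be proved: the actual content of \cite[Thm.\ 1.6]{borza2024measure} is the hard analytic verification that optimal transport in $(\hei,\di_{SF},\mathcal H^4)$ violates the entropy convexity inequality \eqref{eq:CDcond} for every norm on the first layer (including non-strictly-convex ones), and none of that appears in your proposal. What you have written is a legitimate and useful normalization lemma, but not a proof of the theorem.
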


Let us mention that, nonetheless, the \emph{sub-Riemannian} Heisenberg group satisfies a modified curvature-dimension condition with different distortion coefficients, see \cite{MR4019096}; see also \cite{BaMoRi-Unified} for a unified synthetic setting of generalized curvature-dimension conditions for Riemannian and sub-Riemannian structures.

\paragraph{Classification of low-dimensional Carnot groups.} 

We report here a useful result, regarding the classification of low-dimensional Carnot groups, see \cite[Thm.\ 10.81]{ABB-srgeom} or \cite[Sec.\ 10.1.3]{libroELD}. Recall that, given two Lie groups $G,H$ a Lie group isomorphism between them is a diffeomorphism $\phi : G \to H$ that is also a group homomorphism.

\begin{theorem}
\label{thm:classification}
    Let $(G,\di_{SF})$ be a Carnot group of homogeneous dimension $\mathcal Q\leq 4$. Then, $G$ is isomorphic as a Lie group to one of the following:
    \begin{itemize}
        \item[i)] $\R^n$, with $n=1,\ldots,4$, if $G$ is commutative;
        \item[ii)] $\hei$, if $G$ is not commutative and $\mathcal Q=4$.
    \end{itemize}
\end{theorem}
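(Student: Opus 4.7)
The plan is to classify by the step $\s$ of the stratification $\g = V_1 \oplus \cdots \oplus V_\s$, exploiting the constraint $V_{j+1} = [V_j, V_1]$ together with the identity $\mathcal Q = \sum_{i=1}^{\s} i\dim(V_i)$ and the hypothesis $\mathcal Q \leq 4$. First, I would rule out $\s \geq 3$: if $V_3 \neq \{0\}$, then $V_1, V_2, V_3$ are all non-trivial, and since $V_2 = [V_1,V_1] \neq \{0\}$ forces $\dim V_1 \geq 2$ (the bracket of a one-dimensional subspace vanishes), one obtains
\[
    \mathcal Q \geq 1\cdot 2 + 2\cdot 1 + 3\cdot 1 = 7 > 4,
\]
a contradiction. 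Hence only $\s = 1$ or $\s = 2$ need be analysed.

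If $\s = 1$, the algebra $\g = V_1$ is abelian of dimension $\mathcal Q$, and since Carnot groups are simply connected, $G$ is isomorphic as a Lie group to $\R^{\mathcal Q}$, giving case i) with $n = \mathcal Q \in \{1,2,3,4\}$.

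If $\s = 2$, writing $\mathcal Q = \dim V_1 + 2\dim V_2$, the conditions $\dim V_2 \geq 1$ and $\dim V_1 \geq 2$ (by the same bracket argument) force $\mathcal Q \geq 4$. Together with $\mathcal Q \leq 4$, this pins down $\dim V_1 = 2$ and $\dim V_2 = 1$, and in particular $\mathcal Q = 4$. Choosing bases $\{X,Y\}$ of $V_1$ and $\{Z\}$ of $V_2$, the equality $V_2 = [V_1,V_1]$ together with the one-dimensionality of $V_2$ forces $[X,Y]$ to be a non-zero multiple of $Z$; after rescaling $Z$, one arranges $[X,Y] = Z$, while all remaining brackets vanish because $V_3 = \{0\}$. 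These relations identify $\g$ with the Heisenberg Lie algebra, and by the equivalence between simply connected Lie groups and their Lie algebras, $G$ is isomorphic as a Lie group to $\hei$, giving case ii).

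The only delicate point is the step-$2$ analysis, which here is essentially immediate because $\dim V_2 = 1$ collapses the classification to a single scalar (the structure constant of $[X,Y]$ in the basis $\{Z\}$). For larger homogeneous dimension, the analogous argument would require classifying skew-symmetric bilinear maps $V_1 \times V_1 \to V_2$ up to changes of bases in $V_1$ and $V_2$, which becomes substantially more involved.
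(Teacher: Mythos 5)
Your proof is correct. The paper itself gives no argument for this theorem — it only cites \cite{ABB-srgeom} and \cite{libroELD} — and your dimension count on the stratification (ruling out step $\geq 3$ via $\mathcal Q\geq 7$, and pinning down $\dim V_1=2$, $\dim V_2=1$ in step $2$) is exactly the standard argument found in those references, including the correct use of simple connectedness to pass from the Lie algebra isomorphism to the group isomorphism.
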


 By endowing the distribution with a norm, Theorem \ref{thm:classification} implies the following result. 

\begin{corollary}
    \label{cor:classification}
    Let $(G,\di_{SF})$ be a \sF Carnot group of homogeneous dimension $\mathcal Q\leq 4$. Then, $G$ is isometric to one of the following:
    \begin{itemize}
        \item[i)] $(\R^n,\normdotc)$, with $n=1,\ldots,4$ and $\normdotc$ a suitable norm, if $G$ is commutative;
        \item[ii)] a \sF Heisenberg group $(\hei,\di_{SF})$, if $G$ is not commutative and $\mathcal Q=4$.
    \end{itemize}
\end{corollary}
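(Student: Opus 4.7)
My plan is to upgrade the Lie group isomorphism provided by Theorem \ref{thm:classification} to an isometry of sub-Finsler Carnot groups by carefully transporting the reference norm to the first layer of the model. I would begin by applying Theorem \ref{thm:classification} to obtain a Lie group isomorphism $\phi : G \to H$, where $H = \R^n$ in case (i) and $H = \hei$ in case (ii). The idea is then to exploit that $\di_{SF}$ is determined solely by the choice of first layer $V_1$ and a norm on it, together with the fact that $d\phi_\e : \g \to \mathfrak{h}$ is a Lie algebra isomorphism.

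Case (i) is immediate: commutativity forces $[V_1,V_1]=0$, hence $V_1=\g$, and under the identification of the groups with their Lie algebras via $\exp$ the map $\phi$ is a genuine linear isomorphism. Pushing forward the reference norm through $d\phi_\e$ produces a norm $\normdotc$ on $\R^n$, with respect to which $\phi$ is an isometry between $(G,\di_{SF})$ and $(\R^n,\normdotc)$.

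Case (ii) requires one additional adjustment, because the Lie algebra isomorphism $d\phi_\e$ need not send the first layer $V_1^G$ onto the canonical first layer $V_1^\hei$ of $\mathfrak{h} = V_1^\hei \oplus V_2^\hei$. Setting $W := d\phi_\e(V_1^G)$, I would note that $W$ is a $2$-dimensional bracket-generating complement to $V_2^\hei$ satisfying $[W,W]=V_2^\hei$. Consequently, any linear isomorphism $V_1^\hei \to W$ extends, through the bracket relation on the $1$-dimensional derived algebra, to a Lie algebra automorphism $\psi$ of $\mathfrak{h}$, which in turn lifts to a Lie group automorphism $\Psi$ of $\hei$. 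The corrected map $\Psi^{-1} \circ \phi : G \to \hei$ is then a Lie group isomorphism whose differential at $\e$ maps $V_1^G$ onto $V_1^\hei$, and pushing forward the reference norm of $G$ through this differential endows $V_1^\hei$ with a norm. With this norm, $\hei$ becomes a sub-Finsler Heisenberg group with respect to which $\Psi^{-1} \circ \phi$ is an isometry.

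The only conceptual obstacle is precisely the issue isolated above, namely that a Lie group isomorphism between stratified groups need not preserve a chosen stratification: only the derived algebra $V_2^\hei$ is intrinsic, while the first layer is only well-defined up to choice of complement. Once this is corrected by the automorphism $\Psi$, the conclusion is automatic, since $\di_{SF}$ depends only on the data of the first layer and the norm, and both are intertwined by the adjusted map.
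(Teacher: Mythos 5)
Your proof is correct and follows the route the paper intends: the paper simply asserts that Theorem \ref{thm:classification} implies the corollary ``by endowing the distribution with a norm'', and your argument fills in exactly that step, including the one genuinely non-trivial point that a Lie group isomorphism need not preserve the chosen first layer, which you correctly repair with an automorphism of $\hei$ built from the intrinsic derived algebra.
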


\section{Rectifiability of \texorpdfstring{$\cd(K,N)$}{CD(K,N)} spaces with unique tangents}
\label{sec:meta_rectifiability}

The goal of this section is to state and prove the main results on the rectifiability of $\cd(K,N)$ spaces with a.e.\;unique metric tangents and controlled dimension. In the process, we develop a general strategy that, in the last section, will allow to prove rectifiability results also for non-collapsed $\MCP(K,N)$ spaces with a.e.\;unique tangents.

We recall two different notions of $n$-rectifiability: the first one is the metric version of the classical definition in Euclidean spaces and stems from \cite{AmbKirch-MathAnn}, while the second one is tailored for metric measure spaces. In Remark \ref{rmk:rectifiability} below, we explain the advantage of working with both. Here and below, $\mathcal H^n$ denotes the $n$-dimensional Hasurdorff measure. 

\begin{definition}\label{def:rect1}
    Fix $n\in\N$ and let $(\X,\di)$ be a metric space. We say that a Borel subset $E\subset \X$ is $n$-rectifiable if there exists a countable collection of Lipschitz maps $\{f_i:B_i\to \X\}_{i\in \N}$, where $B_i \subset \R^n$ is Borel, such that 
    \begin{equation*}
        \mathcal H^n\left( E \setminus \bigcup_{i\in \N} f_i(B_i) \right)= 0.
    \end{equation*}
\end{definition}
 Observe that if a set $E\subset X$ is $n$-rectifiable according to Definition \ref{def:rect1}, then it is $n'$-rectifiable for any $n'\geq n$. To avoid this feature, one may add the further requirement that $\mathcal H^n\left(\bigcup_{i\in \N} f_i(B_i)\right)>0$. Anyway, we decided to stick to the definition that appears in \cite{AmbKirch-MathAnn, MR4506771}, for the sake of consistency in the literature. Instead, we implement this observation in the next definition of  $(\m,n)$-rectifiability for a metric measure space, since it does not seem to create a conflict of notation with previous literature.

\begin{definition}\label{def:rect2}
    Fix $n\in\N$ and let $(\X,\di,\m)$ be a metric measure space. We say that $\X$ is $(\m,n)$-rectifiable if there exists a countable collection of Lipschitz maps $\{f_i:B_i\to \X\}_{i\in \N}$, where $B_i \subset \R^n $ is Borel, such that 
    \begin{equation}
    \label{eq:m,n-rectifiability}
         \mathcal H^n\left(\bigcup_{i\in \N} f_i(B_i)\right)>0\qquad\text{and}\qquad
        \m\left( \X \setminus \bigcup_{i\in \N} f_i(B_i) \right)= 0.
    \end{equation}
\end{definition}

\begin{remark}[On the definitions of rectifiability]
\label{rmk:rectifiability}
    Definition \ref{def:rect1} is the one adopted in \cite{MR4506771}. As our strategy relies on applying the main result of \cite{MR4506771}, we will work with this one in the proofs below. Instead, the rectifiability notion of Definition \ref{def:rect2} is the one we are going to prove in our main theorems. The reason why this second notion is better suited to the context of metric measure spaces (especially the ones satisfying a curvature-dimension bound) comes from an example presented in \cite{MR4566173,MR4431126}. In fact, there is an $\RCD(K,N)$ space $(\X,\di,\m)$ which is $(\m,n)$-rectifiable for some $n$, but has Hausdorff dimension strictly larger than $n$. In particular, $\X$ is not $n$-rectifiable in the sense of Definition \ref{def:rect1}, but it contains an $\m$-full measure subset which is $n$-rectifiable. In conclusion, in the setting of metric measure spaces $(\X,\di,\m)$, we take advantage of the reference measure $\m$, to use a more flexible definition. 
    In addition, observe that the first requirement of \eqref{eq:m,n-rectifiability} ensures that $n$ is the ``optimal'' rectifiable dimension, in the sense that if $(\X,\di,\m)$ is $(\m,n)$-rectifiable, it is not $(\m,n')$-rectifiable for $n'>n$.
\end{remark}

\subsection{On the notion of convergence of metric measure spaces}

We recall here two notions of convergence for metric measure spaces. The first one is the  \emph{pointed measured Gromov convergence} of (equivalence classes of) metric measure spaces (or pmG for short), introduced in \cite{GigMonSav}. In particular, we use what they call ``extrinsic notion'' of convergence. This is used to characterise rectifiability and controls the Hausdorff distance of large subsets (in a measure theoretic sense) of the underlying metric spaces. The second notion is the \emph{pointed measured Gromov-Hausdorff convergence} of metric measure spaces (or pmGH for short), which requires the convergence both of the measures and of the underlying metric spaces. The latter notion is stronger, and the two coincide for uniformly doubling metric measure spaces (see Lemma \ref{lem:convergence} for the precise statement).

Recall that a pointed metric measure space consists of a metric measure space together with a distinguished point. Two pointed metric measure spaces $(\X_i,\di_i,\m_i,x_i)$, $i = 1, 2$ are isomorphic if there exists an isometric embedding $\iota : \supp(\m_1) \to \X_2$ such that 
\begin{equation}
    \iota_\#(\m_1) = \m_2 \qquad\text{and}\qquad \iota(x_1)=x_2.
\end{equation}
The isomorphism classes of pointed metric measure spaces are denoted by $\mathcal X=[(\X,\di,\m,x)]$. 
In the following, we use the notation $\weakto$ to denote the weak convergence of measures, in duality with continuous functions with bounded support. 

\begin{definition}[pmG convergence]
    Let $\{(\X_n, \di_n, \m_n, x_n)\}_{n\in\N\cup\{\infty\}}$ be a sequence of pointed metric measure spaces. Then, the corresponding sequence of equivalence classes $n\mapsto\mathcal X_n$ converges to $\mathcal X_{\infty}=[(\X_\infty,\di_\infty,\m_\infty,x_\infty)]$  \emph{in the pointed measured Gromov sense} (pmG, for short) if there exists a complete and separable metric space $(\Z, \di^\Z)$ and isometric embeddings $\iota_n : \X_n \to \Z$, for every $n\in \N\cup\{\infty\}$, such that 
    \begin{equation}
        (\iota_n)_\#\m_n \weakto (\iota_\infty)_\#\m_\infty\qquad \text{and}\qquad\iota_n(x_n) \to \iota_\infty(x_\infty)\in \supp((\iota_\infty)_\#\m_\infty)\ \text{ in }\Z.    
    \end{equation}
    The triple $\{(\Z, \di^\Z),\iota_n\}$ is called an effective realisation of the convergence.  
\end{definition}

\begin{definition}[pmGH convergence]
\label{def:pmGH_convergence}
Let $\{(\X_n, \di_n,\m_n, x_n)\}_{n\in\N\cup\{\infty\}}$ be a sequence of pointed metric measure spaces. We say that $(\X_n, \di_n,\m_n, x_n)$ converges to $(\X_\infty, \di_\infty,\m_\infty, x_\infty)$ in the \emph{pointed measured Gromov-Hausdorff sense} (pmGH for short) if there exists a separable metric space $(\Z,\di^\Z)$ and isometric
 embeddings $\iota_n : (\supp(\m_n),\di_n) \to (\Z,\di^\Z)$, for every $n\in\N\cup\{\infty\}$, such that: for every $\varepsilon > 0$ and $R > 0$,  there exists
 $n_0\in \mathbb{N}$ such that for all $n > n_0$, 
 \begin{equation}
    \iota_\infty(B^{\X_\infty}(x_\infty, R)) \subset B^\Z_\varepsilon[\iota_n(B^{\X_n}(x_n,R))] \qquad\text{and}\qquad \iota_n(B^{\X_n}(x_n,R))\subset B^\Z_\varepsilon[\iota_\infty(B^{\X_\infty}(x_\infty,R))],
 \end{equation}
 where $B^\Z_\varepsilon[A] := \{z \in \Z : \di^\Z(z,A) < \varepsilon\}$ for every subset $A \subset \Z$, and
 \begin{equation}
 \label{eq:convergence_measures}
     (\iota_n)_\#\m_n \weakto (\iota_\infty)_\#\m_\infty, \qquad\text{as }n\to\infty.
 \end{equation}
\end{definition}

\begin{remark}
    The classical pointed Gromov-Hausdorff convergence (pGH for short) of a sequence of metric spaces can be extracted from the above definition, dropping the requirement \eqref{eq:convergence_measures}. Clearly, if $(\X_n, \di_n,\m_n, x_n)\to(\X_\infty, \di_\infty,\m_\infty, x_\infty)$ in the pmGH sense, then $(\X_n, \di_n, x_n)\to(\X_\infty, \di_\infty, x_\infty)$ in the pGH sense.  
\end{remark}

\begin{remark}[Comparison with Bate's convergence]
    In \cite[Def.\ 4.9]{MR4506771}, Bate introduces the distance between pointed metric measure spaces $d_*$. This distance induces a convergence which is in general weaker than the pmGH convergence, since it does not require the underlying sets to be close in the pGH sense. Nonetheless, it implies the pGH convergence of subsets of large measure. Moreover, as a consequence of \cite[Prop.\ 2.20]{MR4506771}, the convergence induced by $d_*$ agrees with the pmG convergence, cf. \cite[Rmk.\ 4.14]{MR4506771}. 
\end{remark}

In general, the pmGH convergence implies the pmG convergence. See \cite[Prop.\ 3.30]{GigMonSav}. The converse is also true in uniformly doubling metric measure spaces. We recall below the precise statement. See \cite[Prop.\ 3.33]{GigMonSav}.

\begin{lemma}
\label{lem:convergence}
    Let $\{(\X_n, \di_n, \m_n, x_n)\}_{n\in\N\cup\{\infty\}}$ be a sequence of pointed metric measure spaces. Assume that 
    \begin{enumerate}
        \item[i)] $[(\X_n, \di_n, \m_n, x_n)] \to [(\X_\infty,\di_\infty,\m_\infty,x_\infty)]$ in the pmG sense;
        \item[ii)] $\supp(\m_\infty) = \X_\infty$;
        \item[iii)] The spaces $(\X_n, \di_n, \m_n)$ are uniformly globally doubling, with uniformly bounded (with respect to $n$) doubling constant.
    \end{enumerate}
    Then $(\X_n, \di_n, \m_n, x_n) \to (\X_\infty,\di_\infty,\m_\infty,x_\infty)$ in the pmGH sense.
\end{lemma}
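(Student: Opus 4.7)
\begin{pr}[Proof plan for Lemma \ref{lem:convergence}]
The plan is to fix an effective pmG realisation $(\Z, \di^\Z), \{\iota_n\}_{n \in \N \cup \{\infty\}}$ coming from hypothesis $i)$, and to verify directly, for every $R, \varepsilon > 0$, the two Hausdorff-type inclusions required by Definition \ref{def:pmGH_convergence}. The underlying philosophy is that, under uniform doubling, the support of a measure is quantitatively detectable from its mass: small balls around a support point have mass uniformly bounded below by that of a macroscopic ball containing them. Combined with the lower semicontinuity of $\mu \mapsto \mu(U)$ on open sets under weak convergence, this lets one transfer between ``mass is present'' and ``support is present'' in both directions.

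First I would handle the inclusion $\iota_\infty(B^{\X_\infty}(x_\infty, R)) \subset B^\Z_\varepsilon[\iota_n(B^{\X_n}(x_n, R))]$. The support hypothesis $ii)$ ensures every ball around a point of $\iota_\infty(\X_\infty)$ has strictly positive $(\iota_\infty)_\#\m_\infty$-mass. Since $\m_\infty$ is also doubling (as a weak limit of uniformly doubling measures), the closed ball $\overline{B^{\X_\infty}(x_\infty, R - \varepsilon/4)}$ is totally bounded; cover its $\iota_\infty$-image by finitely many balls $B^\Z(y_j, \varepsilon/4)$ with $y_j \in \iota_\infty(\X_\infty)$. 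Each such ball has strictly positive $(\iota_\infty)_\#\m_\infty$-mass, so by lower semicontinuity under the weak convergence, $(\iota_n)_\#\m_n(B^\Z(y_j, \varepsilon/4)) > 0$ for $n$ large (uniformly in $j$), producing a point of $\iota_n(\supp\m_n)$ in each ball; together with $\iota_n(x_n) \to \iota_\infty(x_\infty)$ these points lie in $\iota_n(B^{\X_n}(x_n, R))$ up to a negligible enlargement of radii.

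For the reverse inclusion I would argue by contradiction: suppose, along a subsequence, there exist $p_n \in \supp(\m_n) \cap B^{\X_n}(x_n, R)$ with $\di^\Z(\iota_n(p_n), \iota_\infty(B^{\X_\infty}(x_\infty, R))) \geq \varepsilon$. Uniform global doubling of the $\m_n$ yields
\[
\m_n\bigl(B^{\X_n}(p_n, \varepsilon/4)\bigr) \geq c(R)\,\m_n\bigl(B^{\X_n}(x_n, 2R)\bigr),
\]
and applying weak convergence to a continuous cutoff supported slightly inside $B^\Z(\iota_\infty(x_\infty), 2R+1)$ gives $\liminf_n \m_n(B^{\X_n}(x_n, 2R)) > 0$. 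After extracting a further subsequence along which $\iota_n(p_n) \to p^* \in \Z$, the uniform lower bound on $(\iota_n)_\#\m_n(B^\Z(\iota_n(p_n),\varepsilon/4))$ transfers to $(\iota_\infty)_\#\m_\infty(B^\Z(p^*, \varepsilon/3)) > 0$, so there is a point of $\iota_\infty(\X_\infty)$ within $\varepsilon/3$ of $p^*$, hence within $\varepsilon/2$ of $\iota_n(p_n)$ for $n$ large. A small check using $\iota_n(x_n) \to \iota_\infty(x_\infty)$ shows such a point lies in $\iota_\infty(B^{\X_\infty}(x_\infty, R))$ up to enlargement absorbable in $\varepsilon$, yielding the contradiction.

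The main obstacle is the extraction of a convergent subsequence $\iota_n(p_n) \to p^*$ in $\Z$, which is not automatic since $\Z$ is merely separable. I would resolve this via tightness: the uniform global doubling together with the vague/weak convergence forces the family $\{(\iota_n)_\#\m_n\mres B^\Z(\iota_\infty(x_\infty), R+1)\}_n$ to be tight (the masses are uniformly finite by doubling, and any escape to infinity would contradict weak convergence tested against compactly supported cutoffs). Prokhorov's theorem then supplies a compact $K \subset \Z$ containing a full-measure portion of each $(\iota_n)_\#\m_n\mres B^\Z(\iota_\infty(x_\infty), R+1)$, and since $p_n \in \supp(\m_n)$, the argument above can be refined to place $\iota_n(p_n)$ (or at worst an $\varepsilon/8$-perturbation of it) in $K$, giving the required accumulation point $p^*$. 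This interplay between uniform doubling and the vague nature of pmG convergence is the delicate technical heart of the proof.
\end{pr}
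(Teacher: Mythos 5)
The paper does not actually prove this lemma: it is quoted verbatim from \cite[Prop.\ 3.33]{GigMonSav}, so there is no in-paper argument to compare against. Your proposal is a self-contained reconstruction along the standard lines (using uniform doubling to pass quantitatively between ``positive mass'' and ``presence of support'', plus the Portmanteau inequalities), and its overall architecture is sound. Two technical points need repair before it is a proof. First, the compactness extraction, which you rightly identify as the crux: tightness of the restricted measures $(\iota_n)_\#\m_n\mres B^\Z(\iota_\infty(x_\infty),R+1)$ does not follow directly, since restriction to a ball is not continuous under weak convergence. The clean route is to fix a cutoff $\chi$ continuous with bounded support, $\mathbf{1}_{\bar B^\Z(\iota_\infty(x_\infty),R+1)}\le\chi\le\mathbf{1}_{B^\Z(\iota_\infty(x_\infty),R+2)}$; then $\chi\,(\iota_n)_\#\m_n$ are finite measures converging weakly in duality with bounded continuous functions, the ``convergent $\Rightarrow$ tight'' half of Prokhorov on the Polish space $\Z$ supplies the compact set $K$, and the uniform lower bound $\m_n(B^{\X_n}(p_n,\varepsilon/8))\ge c>0$ coming from doubling forces $B^\Z(\iota_n(p_n),\varepsilon/8)\cap K\neq\emptyset$, which is enough to extract $p^*$. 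Note also that to transfer positivity from the sequence to the limit measure you need upper semicontinuity on closed bounded sets (not lower semicontinuity on open sets, which goes the other way).

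Second, the radius bookkeeping is not merely ``absorbable in $\varepsilon$'': in both inclusions your argument naturally lands in a ball of radius $R+O(\varepsilon)$ rather than $R$, and without any length-space assumption on $\X_\infty$ or $\X_n$ the set $B(x,R+\varepsilon)\setminus B(x,R)$ need not be close to $B(x,R)$, so the enlargement cannot be discarded for an arbitrary fixed $R$. The standard remedy is to establish the inclusions for all but countably many radii (equivalently, to use the formulation of pmGH convergence that allows replacing $R$ by $R+\varepsilon$ on the larger side), which suffices for the lemma. With these two repairs the argument goes through and recovers the cited result of \cite{GigMonSav}.
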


\subsection{Metric measure tangents}

The notions of convergence of pointed metric measure space can be used to define tangent spaces. In a metric measure space $(\X,\di,\m)$, for every $x\in\supp(\m)$ and $r\in(0,1)$,  define the rescaled measure $\m_r^x$ as
\begin{equation}
\label{eq:tangent_measure}
    \m_r^x:= \left(\int_{B(x,r)}1-\frac1r\di(\cdot,x)\de\m\right)^{-1}\m.
\end{equation}

 \begin{definition}[The collection of tangent spaces]
    Let $(\X,\di,\m)$ be a metric measure space, let $x\in\supp(\m)$ and $r\in(0,1)$. Consider the rescaled and normalised pointed metric measure space $(\X, \di_r,\m^x_r,x)$, where $\di_r:=r^{-1}\di$. Then, the collection of pmG tangents is 
    \begin{equation}
        \Tan_{\rm pmG}(\X,\di,\m,x):=\Big\{\mathcal Y=[({\sf Y},\di^\Y, \n^\Y, y)]: \exists\,r_i\downarrow 0\text{ s.t. }[(\X, \di_{r_i},\m^x_{r_i},x)]\xrightarrow[{\rm pmG}]{i\to \infty} \mathcal Y\Big\}.
    \end{equation}
    Similarly, the collection of pmGH tangents is 
    \begin{equation}
        \Tan_{\rm pmGH}(\X,\di,\m,x):=\Big\{({\sf Y},\di^\Y, \n^\Y, y): \exists\,r_i\downarrow 0\text{ s.t. }(\X, \di_{r_i},\m^x_{r_i},x)\xrightarrow[{\rm pmGH}]{i\to \infty} ({\sf Y},\di^\Y, \n^\Y, y)\Big\}.
    \end{equation}
 \end{definition}

\begin{remark}
    In Lemma \ref{lem:convergence}, the assumption for the sequence to be uniformly globally doubling with a universal constant is used to get precompactness in the pmGH topology. Thus, when considering tangents, we can weaken the above assumption by simply requiring that the original measure is locally doubling.
\end{remark}

We show now that the tangent measure of a measure with finite and positive lower and upper densities, is locally Ahlfors regular. Consider a metric space $(\X, \di)$, a Borel measure $\mu$ on $\X$ and a constant $n>0$. The $n$-th lower and upper $\mu$-densities of a Borel set $E\subset\X$ at $x\in\X$ are defined respectively as 
\begin{equation}
\label{eq:def_ul_densities}
        \Theta_*^n[\mu](E,x):= \liminf_{r\to 0}\frac{\mu(B(x,r)\cap E)}{r^{n}}\qquad\text{and}\qquad\Theta^*_n[\mu](E,x):= \limsup_{r\to 0}\frac{\mu(B(x,r)\cap E)}{r^{n}}.
    \end{equation}
To simplify the notation, in the following, we will use the shorthands $\Theta_*^n[\mu](x) =\Theta_*^n[\mu](\X,x)$ and $\Theta^*_n[\mu](x) =\Theta^*_n[\mu](\X,x)$. Note that, by definition, for every $E\subset\X$, it holds that $$\Theta_*^n[\mu](E,\cdot), \Theta^*_n[\mu](E,\cdot)\in[0,\infty] \quad \text{and}\quad  \Theta_*^n[\mu](E,\cdot)\leq \Theta^*_n[\mu](E,\cdot).$$
Moreover,  $\mu$ is said to be locally $n$-Ahlfors regular if, for every compact $K\subset \X$, there exist $C\geq 1$ and $r_0\in (0,\diam\X]$, depending on $K$, such that
    \begin{equation}
    \label{eq:local_ahlfors}
         C^{-1}r^{n}\leq \mu(B(x,r))\leq C r^{n},\qquad\forall\,x\in K,\ r\in (0,r_0).
    \end{equation}

\begin{prop}
\label{prop:tan=tan}
    Let $(\X, \di,\m)$ be a metric measure space, with $\m$ locally uniformly doubling, cf.\ \eqref{eq:loc_unif_doubling}. Let $n>0$ and assume that $0<\loden{n}(x)\leq \upden{n}(x)<\infty$, for $\m$-a.e.\ $x\in\X$.
    Then, for $\m$-a.e.\ $x\in \X$ and for every $\mathcal Y\in \Tan_{{\rm pmG}}(\X, \di,\m,x)$, there exists a representative $(\Y,\di^{\Y},\n^{\Y},y)\in \mathcal Y$ such that
    \begin{itemize}
        \item[i)] $\supp(\n^\Y)=\Y$;
        \item[ii)] $(\Y,\di^\Y,\n^\Y,y)\in\Tan_{{\rm pmGH}}(\X, \di,\m,x)$;  
        \item[iii)]$\n^\Y$ is locally $n$-Ahlfors regular.
    \end{itemize}
\end{prop}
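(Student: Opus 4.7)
The plan is to establish the three conclusions in turn. For (i), take the canonical representative of $\mathcal{Y}$ whose underlying space is the support of its measure; this representative lies in the equivalence class by the very definition of isomorphism of pointed metric measure spaces. For (ii), observe that each rescaled measure $\m_r^x$ is a positive constant multiple of $\m$, so the family $\{\m_{r_i}^x\}$ is uniformly locally doubling with the same constant as $\m$; applying Lemma \ref{lem:convergence} upgrades the pmG convergence to pmGH.

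For the local Ahlfors regularity (iii), I first control the normalisation constant $Z_r := \int_{B(x,r)}(1-\di(\cdot,x)/r)\de\m$. Sandwiching the integrand between $\tfrac{1}{2}\mathbf{1}_{B(x,r/2)}$ and $\mathbf{1}_{B(x,r)}$ yields $\tfrac{1}{2}\m(B(x,r/2)) \le Z_r \le \m(B(x,r))$, so the density bounds at $x$ give $c_x r^n \le Z_r \le C_x r^n$ for $r$ small, with $c_x, C_x > 0$ depending only on $x$. Next, I uniformise the density estimates via a Lusin--Egorov argument: the sets
\begin{equation}
A_k := \big\{z \in \X : k^{-1}\rho^n \le \m(B(z,\rho)) \le k\rho^n\ \text{ for all }\rho \in (0,1/k)\big\}
\end{equation}
are increasing and $\m$-a.e.\ exhaust $\X$ by hypothesis, and a standard density-point argument for locally doubling measures shows that $\m$-a.e.\ $x$ lies in some $A_k$ and is a Lebesgue density point of it. Combined with the asymptotics of $Z_r$, this yields
\begin{equation}
\lim_{r\to 0}\m_r^x\big(B^{\di_r}(x, L) \setminus A_k\big) = 0, \qquad \forall L > 0.
\end{equation}

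Now fix such an $x$ with its corresponding $k$, and let $(\Y, \di^\Y, \n^\Y, y)$ be the pmGH tangent along some $r_i \downarrow 0$, realised via isometric embeddings $\iota_{r_i}, \iota_\infty$ into a common metric space $(\Z, \di^\Z)$. By Portmanteau and the concentration above, for $\n^\Y$-a.e.\ $y' \in \Y$ there exists a sequence $y'_{r_i} \in A_k$ with $\iota_{r_i}(y'_{r_i}) \to \iota_\infty(y')$ in $\Z$ and $\di_{r_i}(y'_{r_i}, x)$ bounded. Since $y'_{r_i} \in A_k$, we have $k^{-1}(r_i s)^n \le \m(B(y'_{r_i}, r_i s)) \le k(r_i s)^n$ for $r_i s < 1/k$. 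Dividing by $Z_{r_i} \in [c_x r_i^n, C_x r_i^n]$ and passing to the limit via the standard $\liminf$/$\limsup$ Portmanteau bounds on open and closed balls (inflating radii slightly to handle potential boundary mass) yields $\n^\Y(B^{\di^\Y}(y',s)) \asymp s^n$ for $s$ small, with constants depending only on $x$ and $k$. To extend this from $\n^\Y$-a.e.\ $y'$ to every $y' \in \supp(\n^\Y) = \Y$, pick a ``good'' point $y'' \in B^{\di^\Y}(y', s/2)$ (which exists since good points form an $\n^\Y$-full-measure set, hence are dense in the support) and use the inclusions $B^{\di^\Y}(y'', s/2) \subset B^{\di^\Y}(y', s) \subset B^{\di^\Y}(y'', 2s)$ to transfer both bounds. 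The main obstacle is this last step: combining the measure-theoretic concentration of $\m_r^x$ on $A_k$ with the weak convergence $(\iota_{r_i})_\#\m_{r_i}^x \weakto (\iota_\infty)_\#\n^\Y$ requires careful Portmanteau bookkeeping, and the pointwise extension from an $\n^\Y$-full set to the whole support hinges on the density approximation above.
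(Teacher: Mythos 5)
Your argument is correct, but for the heart of the matter --- item \emph{iii)} --- it takes a genuinely different route from the paper. The paper only ever uses the density hypothesis \emph{at the blow-up point $x$ itself}: it derives the two-sided bound $\tilde c\rho^n\le\n^\Y(B^\Y(y,\rho))\le\tilde C\rho^n$ at the base point $y$ of the tangent (exactly via your sandwich $\tfrac12\m(B(x,r/2))\le Z_r\le\m(B(x,r))$ and the open/closed-ball Portmanteau inequalities), observes that $\tilde c,\tilde C$ depend only on $x$ and not on the particular tangent, and then propagates the bound to an arbitrary $w\in\Y$ by invoking the ``tangents of tangents are tangents'' theorem of Preiss--Le Donne type (\cite[Thm.\ 3.2]{MR3377394}): the re-based, re-normalised space $(\Y,\di^\Y,\n^w_1,w)$ is again an element of $\Tan_{{\rm pmGH}}(\X,\di,\m,x)$, so the base-point estimate applies to it, and a short argument on the normalising function $\mathbf n(w)$ (locally bounded above by the Ahlfors bound at $y$, below by lower semicontinuity) converts this into local Ahlfors regularity of $\n^\Y$. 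You instead uniformise the density bounds in the \emph{original} space via the Egorov-type sets $A_k$, show the rescaled measures concentrate on $A_k$ at density points, transfer the uniform bounds to $\n^\Y$-a.e.\ point of the tangent by approximating it with points of $A_k$ and applying Portmanteau, and finish by density of good points in $\supp(\n^\Y)$. Both arguments are sound; the paper's is shorter but leans on the external re-basing theorem (and must discard the null set where that theorem fails), while yours is more self-contained at the price of the measurability/density-point bookkeeping for the sets $A_k$ --- which, incidentally, is essentially the same decomposition the paper performs separately in Lemma \ref{lem:gmt}. For items \emph{i)} and \emph{ii)} your argument coincides with the paper's (doubling gives pmGH-precompactness, so the pmG tangent is realised as a pmGH tangent with full-support measure).
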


    Before proceeding with the proof we observe that, given an isometry $\iota: (\X,\di) \to (\Z,\di^\Z)$ and a Borel measure $\m$ on $(\X,\di)$, the very definition of pushforward measure guarantees that
    \begin{equation}\label{eq:pushforward&isometries}
        \m\big(B^\X(x,r)\big) = \iota_\# \m\big(B^{\Z}(\iota(x),r)\big), \qquad \forall\, x\in \X,\, r>0.
    \end{equation}

\begin{proof}
    Take any $x\in\X$ such that $0<\loden{n}(x)\leq \upden{n}(x)<\infty$ and the conclusion of \cite[Thm.\ 3.2]{MR3377394} holds. Consider any $\mathcal Y \in \Tan_{{\rm pmG}}(\X, \di,\m,x)$. Then, there exist a sequence of radii $r_i\downarrow 0$, as $i\to\infty$, such that $[(\X, \di_{r_i},\m^x_{r_i},x)] \to \mathcal Y$ in the pmG sense. 
    
    Firstly, we show that there exists a representative of $\mathcal Y$ that satisfies items {\slshape i)} and {\slshape ii)}. Since $\m$ is locally uniformly doubling, it has full support and the sequence $\{(\X,\di_{r_i},\m_{r_i}^x,x)\}_{i\in \N}$ is precompact in the pmGH topology. Thus, up to considering a subsequence of $\{r_i\}_{i\in \N}$, there exists a metric measure space
    $(\Y,\di^\Y,\n^\Y,y)\in \mathcal Y$ such that 
    \begin{equation}
        (\X,\di_{r_i},\m_{r_i}^x,x)\xrightarrow[{\rm pmGH}]{i\to\infty}(\Y,\di^\Y,\n^\Y,y).
    \end{equation}
     In particular, this means that there exist isometries
    \begin{equation}
        \iota_i: (\X, \di_{r_i})\to (\Z,\di^\Z) \qquad \text{and} \qquad \iota_\infty: (\text{supp}(\n^\Y), \di^{\Y})\to (\Z,\di^\Z),
    \end{equation}
    such that
    \begin{equation}\label{eq:relisationofpmGH}
        \iota_i(x) \to \iota_\infty (y) \quad \text{and} \quad (\iota_i) _\# \m_{r_i}^x \weakto (\iota_\infty) _\# \n^{\Y}, \qquad \text{as }i \to \infty.
    \end{equation}
     By the properties of $\m$, the measure $\n^\Y$ is locally uniformly doubling and has full support. 
    
    Secondly, we prove that $\n^\Y$ is locally $n$-Ahlfors regular on $\Y$. For ease of notation, set $\n:=\n^\Y$ and $\di_i:=\di_{r_i}=r_i^{-1}\di$, in the rest of the proof. Note that, by our choice of $x$, there exist constants $C=C(x)\geq 1$ and $r_0=r_0(x)>0$ such that 
    \begin{equation}
    \label{eq:pointwise_ahlfors}
        C^{-1}r^{n}\leq \m(B(x,r))\leq Cr^{n},\qquad \forall\,r\in (0,r_0).
    \end{equation}
    Fix $\rho>0$ and $\varepsilon>0$. Then, using \eqref{eq:pushforward&isometries} and \eqref{eq:relisationofpmGH}, we get that 
    \begin{equation}
    \label{eq:upper_estimate_nball}
        \begin{split}
            \n\big(B^\Y(y, \rho)\big) &= (\iota_\infty) _\# \n \big( B^\Z(\iota_\infty (y), \rho)\big) \leq \liminf_{i\to \infty} \,(\iota_i) _\# \m_{\rho_i}^x \big( B^\Z(\iota_\infty (y), \rho)\big) \\
            &\leq \liminf_{i\to \infty} \,(\iota_i) _\# \m_{r_i}^x \big( B^\Z(\iota_i (x), \rho(1+\varepsilon))\big) = \liminf_{i\to \infty} \, \m_{r_i}^x \big( B^{\di_i}(x, \rho(1+\varepsilon))\big) \\
            &= \liminf_{i\to \infty} \, \left(\int_{B(x,r_i)}1-\frac1{r_i}\di(\cdot,x)\de\m\right)^{-1} \m\big( B(x, r_i \rho (1+\varepsilon))\big),
        \end{split}
    \end{equation}
    where the first inequality follows from the lower semicontinuity of the evaluation on bounded open sets with respect to the weak convergence of measures. Using \eqref{eq:pointwise_ahlfors}, we deduce that for every $r_i$ sufficiently small 
    \begin{equation} 
        \m\big( B(x, r_i \rho (1+\varepsilon))\big) \leq C r_i^{n} \rho^{n} (1+\varepsilon)^{n}
    \end{equation}
    and
    \begin{equation}
    \label{eq:lower_bound_rescaling}
        \int_{B(x,r_i)}1-\frac1{r_i}\di(\cdot,x)\de\m \geq \int_{B(x,r_i/2)} \frac 12 \de\m = \frac 12 \m \big(B(x,r_i/2)\big) \geq \frac 12 C^{-1} \bigg( \frac {r_i
}2\bigg)^n.
    \end{equation}
    Therefore, combining these two inequality with \eqref{eq:upper_estimate_nball}, we conclude that 
    \begin{equation}
        \n\big(B^\Y(y, \rho)\big) \leq  C^22^{n+1} (1+\varepsilon)^n \, \rho^n, \qquad \forall\, \rho>0.
    \end{equation}
    Proceeding in a similar way, for any $\varepsilon>0$ we obtain that 
    \begin{equation}
        \begin{split}
            \n\big(B^\Y(y, \rho)\big) &= (\iota_\infty) _\# \n \big( B^\Z(\iota_\infty (y), \rho)\big) \geq (\iota_\infty) _\# \n \big( \bar B^\Z(\iota_\infty (y), \rho(1-\varepsilon))\big)\\
            &\geq \limsup_{i\to \infty} \,(\iota_i) _\# \m_{r_i}^x \big( \bar B^\Z(\iota_\infty (y), \rho(1-\varepsilon))\big) \geq \limsup_{i\to \infty} \,(\iota_i) _\# \m_{r_i}^x \big( B^\Z(\iota_i (x), \rho(1-\varepsilon)^2)\big) \\
            &= \limsup_{i\to \infty} \, \m_{r_i}^x \big( B^{\di_i}(x, \rho(1-\varepsilon)^2)\big) \\
            &= \limsup_{i\to \infty} \, \left(\int_{B(x,r_i)}1-\frac1{r_i}\di(\cdot,x)\de\m\right)^{-1} \m\big( B(x, r_i \rho (1-\varepsilon)^2)\big),
        \end{split}
    \end{equation}
    where this time we used the upper semicontinuity of the evaluation on bounded closed sets with respect to the weak convergence of measures. Using \eqref{eq:pointwise_ahlfors}, we obtain that for $r_i$ sufficiently small
     \begin{equation} 
        \m\big( B(x, r_i \rho (1+\varepsilon))\big) \geq C^{-1} r_i^
{n} \rho^{n} (1-\varepsilon)^{2n}
    \end{equation}
    and
    \begin{equation}
        \int_{B(x,r_i)}1-\frac1{r_i}\di(\cdot,x)\de\m \leq \m \big(B(x,r_i)\big) \leq
 C r_i^n.
    \end{equation}
    We can then deduce that 
    \begin{equation}
        \n\big(B^\Y(y, \rho)\big) \geq \frac{ (1-\varepsilon)^{2n}}{C^2} \, \rho^n, \quad \forall \rho>0.
    \end{equation}
    In conclusion, we have found two constants $\tilde c, \tilde C>0$ such that 
    \begin{equation}
         \tilde c \rho^n \leq \n \big(B^\Y(y,\rho)\big) \leq \tilde C \rho^n, \qquad \forall\, \rho >0.
    \end{equation}
    Moreover, we observe that these constants do not depend on the specific element $(\Y, \di^\Y, \n, y)$ of $\Tan_{{\rm pmGH}}(\X,\di,\m,x)$, but only on the point $x\in\X$. 
    
    Now, let $w\in \Y$. By \cite[Thm.\ 3.2]{MR3377394}, we have that $(\Y, \di^\Y, \n^{w}_1, w)\in \Tan_{{\rm pmGH}}(\X,\di,\m,x)$. In particular, the first part of the proof guarantees that
    \begin{equation}
    \label{eq:iteration_ahlfors_reg}
          \tilde c \rho^n \leq \n^{w}_1 \big(B^\Y(w,\rho)\big) \leq  \tilde C \rho^n, \qquad \forall \rho >0.
    \end{equation}
    We introduce the function $\textbf{n}: \Y \to \R$, defined as  
    \begin{equation}
        \textbf{n}(z):=  \int_{B(z,1)}1-\di(\cdot,x)\de\n,
    \end{equation}
    so that $\n^{w}_1= \textbf{n}(w)^{-1}\cdot  \n$. On the one hand, observe that 
    \begin{equation}
        \textbf{n}(z)\leq \n \big( B^\Y(z,1)\big) \leq \n \big( B^\Y (y, \di^\Y(y,z) +1)\big) \leq \tilde C (\di^\Y(y,z) +1)^{n},
    \end{equation}
    thus $\textbf{n}$ is locally bounded above. On the other hand, Fatou's Lemma ensures that $\textbf{n}$ is lower semicontinuous, and therefore, being always positive, it is locally bounded from below by a positive constant. Now, let $K\subset \Y$ be a compact subset containing $w$ and let 
    \begin{equation}
        k_1:=\min_{z\in K} \textbf n(z)>0 \qquad\text{and}\qquad k_2:=\max_{z\in K} \textbf n(z)<\infty.
    \end{equation}
    Then, using \eqref{eq:iteration_ahlfors_reg} and the definition of $\n_1^w$, we obtain 
    \begin{equation}
        k_1 \tilde c \rho^{n} \leq \textbf{n}(w)\n_1^w\big(B^\Y(w,\rho)\big)=\n\big(B^\Y(w,\rho)\big)\leq k_2\tilde C \rho^{n},\qquad \forall\, \rho>0.
    \end{equation}
    This proves that the measure $\n$ is locally $n$-Ahlfors regular.
\end{proof}

\subsection{Rectifiability of \texorpdfstring{$\cd(K,N)$}{CD(K,N)} spaces with unique tangents} \label{sec:rectCD}

In this section, we prove the first main result. We start by recalling two fundamental theorems that are at the core of the strategy. The first one is a result by Le Donne \cite{MR2865538} (after the work of Berestovski\u{\i} \cite{MR985283}) that describes tangents of metric spaces with unique tangents. The second one is a characterization of rectifiability in terms of metric tangents  due to Bate \cite{MR4506771}.

\begin{theorem}[{\cite[Thm.\ 1.2]{MR2865538}}]
\label{thm:eld}
     Let $(\X,\di,\m)$ be a geodesic metric measure space, with $\m$ a doubling measure. Assume that, for $\m$-a.e.\ $x \in \X$, the set $\Tan_{{\rm pGH}}(\X, \di,x)$ contains only one element. Then, for $\m$-a.e.\ $x \in \X$, the element in $\Tan_{{\rm pGH}}(\X, \di,x)$ is a \sF Carnot group. 
\end{theorem}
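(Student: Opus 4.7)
The plan is to show that, at $\m$-a.e.\ point $x$, the unique pGH tangent $(\Y,\di^\Y,y)$ enjoys three structural features: (a) it is self-similar, i.e.\ admits a one-parameter family of metric dilations fixing $y$ that scale distances by arbitrary positive factors; (b) its isometry group acts transitively; (c) it is geodesic and locally compact. Once these are in place, I would invoke the classical characterization of Berestovski\u{\i} (together with its sub-Riemannian/sub-Finsler refinement by Mitchell--Bella\"iche): a locally compact, connected, geodesic, isometrically homogeneous metric space admitting self-similar dilations is isometric to a stratified, simply connected nilpotent Lie group equipped with a left-invariant \sF distance, i.e.\ a \sF Carnot group.

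\textbf{Step 1 (self-similarity).} Fix a point $x$ where $\Tan_{{\rm pGH}}(\X,\di,x)$ is a singleton $\{(\Y,\di^\Y,y)\}$. For any $\lambda>0$, the rescaled sequence $(\X,(\lambda r_i)^{-1}\di,x)$ admits $(\Y,\di^\Y,y)$ as pGH limit along $r_i\downarrow 0$; on the other hand, by definition, any pGH limit of this rescaled sequence is isometric to $(\Y,\lambda^{-1}\di^\Y,y)$. By uniqueness of the tangent, these two pointed spaces must agree, producing an isometry $\delta_\lambda:(\Y,\di^\Y,y)\to(\Y,\lambda^{-1}\di^\Y,y)$, i.e.\ a metric dilation by factor $\lambda$ fixing the base point. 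Local compactness and the geodesic property descend from $\X$ to $\Y$ by standard stability of the pGH limit under the doubling hypothesis.

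\textbf{Step 2 (homogeneity).} This is the heart of the proof. Using the doubling property of $\m$, one has compactness of rescalings and a Lebesgue-type differentiation principle: for $\m$-a.e.\ $x$, and for every neighbourhood scale, most nearby points have tangents pGH-close to $(\Y,\di^\Y,y)$. The plan is to show that, for such a generic $x$, given any $z\in\Y$ the tangent of $\Y$ at $z$ is again $(\Y,\di^\Y,y)$. The idea: approximate $z$ by images $\iota_i^{-1}(z_i)\in\X$ in an effective realisation of $(\X,\di_{r_i},x)\to(\Y,\di^\Y,y)$; for $\m$-a.e.\ choice of such sequence (here one uses a Fubini/Lusin argument applied to the full-measure set of points with unique tangent) the pGH blow-ups along the base $\iota_i^{-1}(z_i)$ converge to $(\Y,\di^\Y,y)$ as well. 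By a standard diagonal extraction, this shows that the tangent of $\Y$ at $z$ is isometric to $\Y$, which in particular yields an isometry of $\Y$ sending $y$ to $z$ up to small error. Combining with the dilations $\delta_\lambda$ from Step 1 to ``zoom in'' on $z$ and then rescaling back, one upgrades this approximate statement into an actual isometry of $(\Y,\di^\Y)$ mapping $y$ to $z$. This gives transitivity.

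\textbf{Step 3 (conclusion).} Once $\Y$ is locally compact, connected, geodesic, isometrically homogeneous and self-similar, one is in the scope of Berestovski\u{\i}'s theorem and its refinements: such a space is isometric to a simply connected, stratified, nilpotent Lie group with a left-invariant \sF distance. The presence of the one-parameter dilation family $\{\delta_\lambda\}$ compatible with the group structure forces exactly the Carnot stratification, so $\Y$ is a \sF Carnot group, as desired.

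\textbf{Main obstacle.} The delicate step is Step 2: transferring an $\m$-almost everywhere property on $\X$ into a genuinely pointwise property (transitive isometric action) on a single tangent $\Y$. The difficulty is that the full-measure set of ``good'' points in $\X$ need not be visible inside the blow-up without an auxiliary reference measure on $\Y$; overcoming this requires the doubling structure together with a careful selection argument that produces, for a.e.\ $x$, enough good approximating sequences $\iota_i^{-1}(z_i)$ to cover a dense (and via dilations and closure, the whole) set of basepoints in $\Y$.
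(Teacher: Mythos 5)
The paper does not actually prove Theorem \ref{thm:eld}: it is quoted from Le Donne \cite{MR2865538}, so your proposal has to be measured against the proof in that reference. Your three-step outline does match its overall architecture: uniqueness of the tangent forces self-similarity (your Step 1 is correct as written, the doubling hypothesis supplying the precompactness needed for the tangent to exist along every subsequence); a Preiss-type measure-theoretic argument upgrades the $\m$-a.e.\ hypothesis on $\X$ to isometric homogeneity of the tangent; and one concludes via Berestovski\u{\i}'s description of locally compact, geodesic, isometrically homogeneous spaces together with the dilations (equivalently, with the Mitchell--Margulis--Mostow theorem identifying the tangent of such a homogeneous \sF manifold with a Carnot group, plus the observation that by self-similarity the tangent of $\Y$ at $y$ is $\Y$ itself). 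So Steps 1 and 3 are fine modulo correct citations.

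The genuine gap is in Step 2, and it is not merely a missing detail. What the cited proof establishes is that for $\m$-a.e.\ $x$ the collection $\Tan_{{\rm pGH}}(\X,\di,x)$ is closed under \emph{change of basepoint}: if $(\Y,\di^\Y,y)$ is a tangent at $x$ and $z\in\Y$, then $(\Y,\di^\Y,z)$ is again a tangent of $\X$ at $x$. This is the metric analogue of Preiss's theorem that tangent measures at a.e.\ point are invariant under translation by points of their support; its proof is a quantitative covering/density argument with the doubling measure, and it is exactly the step your ``Fubini/Lusin'' sentence gestures at without supplying. Once one has it, uniqueness gives a pointed isometry $(\Y,y)\cong(\Y,z)$ outright, i.e.\ transitivity, with no further work. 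Your route instead asserts that the tangent of $\Y$ at $z$ is $(\Y,y)$ and then tries to ``upgrade'' this to a global isometry sending $y$ to $z$ by composing with the dilations $\delta_\lambda$. That upgrade does not work as described: the convergence $(\Y,\lambda_i^{-1}\di^\Y,z)\to(\Y,\di^\Y,y)$ only yields $\varepsilon_i$-almost isometries from balls of radius $R\lambda_i$ around $z$ onto balls of comparable radius around $y$ after rescaling back by $\delta_{\lambda_i}$; these are statements at smaller and smaller scales and do not assemble into an isometry of $(\Y,\di^\Y)$ at scale one. Likewise, your claim that the blow-ups of $\X$ at nearby points $\iota_i^{-1}(z_i)$ \emph{at the scale $r_i$ adapted to $x$} converge to $(\Y,\di^\Y,y)$ is not a consequence of those points having unique tangents (their tangents are taken at their own vanishing scales and need not even be isometric to $\Y$); it has to be extracted from the density argument above. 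In short, the correct key lemma is basepoint-invariance of the tangent set at a.e.\ point, and your sketch neither states it nor provides a working substitute.
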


\begin{remark}
    In the above theorem, the assumption on the measure can be relaxed, by requiring instead that $\m$ is locally uniformly doubling. 
\end{remark}

In the following, given $K \geq 1$, we denote by $\mathcal L_K$ the set of isometry classes of all pointed proper metric spaces $(\X,\di,x)$ for which there exists a surjective and $K$-bi-Lipschitz map $\psi: (\R^n,\normdotc) \to \X$. Also, we define the set
\begin{equation}
     \widetilde{\mathcal L}_K = \{\mathcal X=[(\X,\di,\m,x)] : (\supp\m,\di) \in \mathcal L_K\}.
\end{equation}

\begin{theorem}[{\cite[Thm.\ 1.2]{MR4506771}}]
\label{thm:bate}
    Let $(\X,\di)$ be a complete metric space, $n \in \N$ and let $E \subset X$ be Borel measurable with $\mathcal H^n(E) < \infty$. Then, the following are equivalent: 
    \begin{enumerate}
        \item[i)] $E$ is $n$-rectifiable;
        \item[ii)] for $\mathcal H^n$-a.e.\ $x \in E$, $\Theta^n_*[\mathcal H^n](E,x)> 0$ and there exists an $n$-dimensional Banach space $(\R^n,\normdotc_x,x)$ such that
    \begin{equation}
    \label{eq:bate}
        \Tan_{{\rm pmG}}(\X, \di,\mathcal H^n_{|E},x) = \{[(\R^n,\normdotc_x,c_x\mathcal H^n,0)]\},
    \end{equation}
    where $c_x>0$ is an explicit constant;
        \item[iii)] for $\mathcal H^n$-a.e.\ $x \in E$, $\Theta^n_*[\mathcal H^n](E,x)> 0$ and there exists  $K_x \geq 1$ such that
        \begin{equation}
            \Tan_{{\rm pmG}}(\X, \di,\mathcal H^n_{|E},x) \subset \widetilde{\mathcal L}_{K_x}.
        \end{equation}
    \end{enumerate} 
\end{theorem}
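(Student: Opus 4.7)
The plan is to prove the three-way equivalence cyclically, (i) $\Rightarrow$ (ii) $\Rightarrow$ (iii) $\Rightarrow$ (i), with only the last implication requiring substantial work.

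For (i) $\Rightarrow$ (ii), my starting point would be Kirchheim's metric differentiability theorem: for each Lipschitz $f_i\colon B_i\subset\R^n\to \X$, at $\mathcal{L}^n$-almost every $y\in B_i$ there is a seminorm $\mathrm{md}_y f_i$ on $\R^n$ with $\di(f_i(y+v),f_i(y))=\mathrm{md}_y f_i(v)+o(|v|)$. After decomposing $B_i$ (up to a null set) according to the metric differential, I can assume $\mathrm{md}_y f_i$ is a genuine norm on each piece. At a Lebesgue density point $y$ of such a piece, a blow-up of $f_i$ converges in the pmG sense to the Banach space $(\R^n,\mathrm{md}_y f_i)$, and Kirchheim's area formula identifies the limit of the rescaled measures with $c_x\,\mathcal{H}^n$ for the appropriate constant. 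This delivers both the positive lower density and the prescribed form of the tangent. The implication (ii) $\Rightarrow$ (iii) is immediate: every $n$-dimensional Banach space $(\R^n,\|\cdot\|_x)$ sits in $\mathcal{L}_{K_x}$ with $K_x$ the bi-Lipschitz constant between $\|\cdot\|_x$ and a fixed Euclidean norm on $\R^n$.

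The heart of the argument is (iii) $\Rightarrow$ (i), which I would tackle in two stages. \emph{Stage 1 (uniformisation).} By a Borel--Lusin-type argument I would pass to a compact $F\subset E$ with $\mathcal{H}^n(E\setminus F)$ arbitrarily small, on which the bounds $K_x\leq K$ and $\Theta^n_*[\mathcal{H}^n](E,x)\geq \delta$ hold uniformly. A compactness argument in the pointed metric measure topology then promotes the pointwise tangential hypothesis into a \emph{scale-uniform} approximation: there exists $r_0>0$ such that for every $x\in F$ and every $r<r_0$, the rescaled pointed space $(\X, r^{-1}\di, r^{-n}\restr{\mathcal{H}^n}{E}, x)$ lies within $\varepsilon$ (in the distance $d_*$ of Bate) of some element of $\widetilde{\mathcal{L}}_K$. \emph{Stage 2 (construction).} I would then build a Lipschitz map $\phi\colon A\subset\R^n\to F$ via an iterative dyadic Whitney/David--Semmes scheme: at each dyadic scale select the $K$-bi-Lipschitz model approximating $F$ at that scale, and glue consecutive scales by a partition of unity on $\R^n$. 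The telescoping differences form a geometric series by the scale-uniform approximation, so $\phi$ is Lipschitz; the lower density bound combined with a Vitali-type argument ensures $\mathcal{H}^n(F\setminus \phi(A))=0$.

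I expect the main obstacle to lie in Stage 2: producing a \emph{single} globally defined Lipschitz parametrisation from the proliferation of scale- and point-dependent Banach models whose local approximations need not match on overlaps. Controlling the accumulated error across the dyadic tree while simultaneously ensuring that the image has positive $\mathcal{H}^n$-density at every point of $F$ is precisely where the scale-uniform upgrade of (iii) (achievable via Egorov on $F$) is indispensable, and where the deepest content of the theorem resides.
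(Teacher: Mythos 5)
This statement is not proved in the paper at all: it is Bate's theorem, quoted verbatim from \cite{MR4506771} and used as a black box, so there is no in-paper argument to compare yours against. Judging your sketch on its own merits: the directions (i) $\Rightarrow$ (ii) (Kirchheim metric differentiation plus the area formula at density points) and (ii) $\Rightarrow$ (iii) are essentially right and match how such implications are standardly proved.

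The genuine gap is in Stage 2 of your (iii) $\Rightarrow$ (i). Your plan is to ``glue consecutive scales by a partition of unity on $\R^n$'' to assemble a single Lipschitz map $\phi\colon A\subset\R^n\to F$. Gluing by a partition of unity requires taking convex combinations of the competing local approximations, and these combinations must land in (or near) the \emph{target}; but the target here is an arbitrary complete metric space $\X$ with no linear or even geodesically convex structure, and the local models are different Banach spaces $(\R^n,\normdotc_x)$ that need not be compatible on overlaps. There is no known way to run a David--Semmes/Toro-type corona construction in this generality, and this is precisely why the theorem was open for so long. Bate's actual route is structurally different: rather than building a parametrisation $\R^n\to\X$, he works with $1$-Lipschitz maps $\X\to\R^n$ (where partitions of unity and perturbation arguments \emph{are} available because the target is linear) and invokes his earlier characterisation of purely $n$-unrectifiable sets --- a set of positive measure and positive lower density is purely unrectifiable if and only if residually many Lipschitz maps to $\R^n$ crush it to $\mathcal H^n$-measure zero --- showing that the uniform tangential hypothesis in (iii) forces the existence of a Lipschitz image of positive measure, contradicting pure unrectifiability of any bad piece. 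So while your Stage 1 (Egorov/uniformisation of $K_x$, $\Theta^n_*$ and the scale of approximation) is a sensible and necessary preliminary, Stage 2 as written would not go through, and the deepest content of the theorem lies exactly in circumventing the obstruction you correctly identified but did not resolve.
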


\begin{remark}
    Compared with \cite{MR4506771}, we use a different normalisation to define the tangent measure, cf.\ \eqref{eq:tangent_measure}. This justifies the constant $c_x$ in \eqref{eq:bate}, which is different from the one of \cite[Thm.\ 1.2]{MR4506771}.
\end{remark}

Given a metric measure space $(\X,\di,\m)$ and $n>0$, recall the definition of $n$-th upper and lower densities of $\m$, cf.\ \eqref{eq:def_ul_densities}, and the subsequent shorthands. Note that, if $(\X,\di)$ is a geodesic space and $\m$ is locally uniformly doubling, for every fixed $r>0$, the function $\X\ni x\mapsto \m(B(x,r))$ is continuous (see for example \cite[Lem.\ 3.8]{MR3165282}) and thus $x\mapsto \loden{n}(x)$ and $x\mapsto \upden{n}(x)$ are Borel functions.

\begin{lemma}
\label{lem:gmt}
    Let $(\X,\di,\m)$ a geodesic metric measure space with $\m$ locally uniformly doubling and $\m(\X)<\infty$. Fix $n>0$ and assume that $0<\loden{n}(x)\leq \upden{n}(x)<\infty$, for $\m$-a.e.\ $x\in\X$. Then, for every $\varepsilon>0$, there exists a Borel set $A_\varepsilon\subset \X$ such that 
    \begin{itemize}
        \item[i)] $\m(\X\setminus A_\varepsilon)<\varepsilon$;
        \item[ii)] there exists a constant $C_\varepsilon\geq 1$ such that, for every $x\in A_\varepsilon$, there is a $r_{\varepsilon,x}>0$ so that 
        \begin{equation}
        \label{eq:this_is_not_ahlfors_reg}
            C_\varepsilon^{-1}r^n\leq \m(B(x,r))\leq C_\varepsilon r^n,\qquad\forall\, r\in [0,r_{\varepsilon,x}];
        \end{equation}
        \item[iii)] $\m_{|A_\varepsilon}$ is equivalent to $\mathcal H^n_{|A_\varepsilon}$.
    \end{itemize}
\end{lemma}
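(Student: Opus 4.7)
The strategy is to write $\X$ (up to $\m$-null sets) as a monotone union of Borel sets on which the density bounds hold uniformly at a common positive scale, to extract one such set of large measure, and finally to invoke classical density-comparison arguments to deduce mutual absolute continuity with $\mathcal H^n$.

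For $k, j \in \N$, I would define
\begin{equation*}
E_{k,j} := \left\{x \in \X \suchthat k^{-1} r^n \leq \m(B(x,r)) \leq k r^n \text{ for every } r \in (0, 1/j] \right\}.
\end{equation*}
Since $(\X, \di)$ is geodesic and $\m$ is locally uniformly doubling, the map $x \mapsto \m(B(x,r))$ is continuous for every fixed $r > 0$, as recalled in the excerpt. Combined with the monotonicity of $r \mapsto \m(B(x,r))$ and a sandwich with rational radii approaching $r$ from below and above, this shows that the defining condition of $E_{k,j}$ is equivalent to requiring the same bounds only for $r \in \Q \cap (0, 1/j]$. Hence each $E_{k,j}$ is a countable intersection of closed sets, and is therefore closed, in particular Borel. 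The family $\{E_{k,j}\}$ is monotonically non-decreasing in both $k$ and $j$.

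The assumption $0 < \loden{n}(x) \leq \upden{n}(x) < \infty$ for $\m$-a.e.\ $x$ implies that every such $x$ lies in some $E_{k,j}$: indeed, the density inequalities provide constants $c_x, C_x, r_x > 0$ with $c_x r^n \leq \m(B(x,r)) \leq C_x r^n$ for all $r \in (0, r_x]$, and then $x \in E_{k,j}$ as soon as $k \geq \max\{C_x, 1/c_x\}$ and $j \geq 1/r_x$. Thus $\m(\X \setminus \bigcup_{k,j} E_{k,j}) = 0$, and since $\m(\X) < \infty$ and the family is non-decreasing in both indices, I can fix $k_\varepsilon \in \N$ large enough so that $A_\varepsilon := E_{k_\varepsilon, k_\varepsilon}$ satisfies $\m(\X \setminus A_\varepsilon) < \varepsilon$. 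Items i) and ii) then follow immediately with $C_\varepsilon := k_\varepsilon$ and the uniform choice $r_{\varepsilon, x} := 1/k_\varepsilon$, which does not even depend on $x \in A_\varepsilon$.

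For iii), on $A_\varepsilon$ we have the pointwise bounds $C_\varepsilon^{-1} \leq \loden{n}(x) \leq \upden{n}(x) \leq C_\varepsilon$. Since $\m$ is locally uniformly doubling, Vitali-type covering arguments are available, and the standard density-comparison results in metric geometric measure theory (Federer-type differentiation theorems relating $\m$ to $\mathcal H^n$ via upper and lower densities) yield a two-sided estimate of the form $\alpha_n C_\varepsilon^{-1} \mathcal H^n(E) \leq \m(E) \leq \beta_n C_\varepsilon \mathcal H^n(E)$ for every Borel $E \subset A_\varepsilon$, where $\alpha_n, \beta_n > 0$ depend only on $n$. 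This gives the mutual absolute continuity of $\m_{|A_\varepsilon}$ and $\mathcal H^n_{|A_\varepsilon}$ claimed in iii). The only delicate bookkeeping is in the Borel measurability of $E_{k,j}$, which is reduced to a countable intersection via the continuity of $x \mapsto \m(B(x,r))$ and the monotonicity of $r \mapsto \m(B(x,r))$; everything else is a standard stratification plus a density-to-Hausdorff-measure comparison, which one expects to be the longest but most routine step.
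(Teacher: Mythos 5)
Your proof is correct and follows the same overall architecture as the paper's: a Borel stratification of $\X$ by uniform density bounds, followed by a two-sided density-to-Hausdorff-measure comparison. The one genuine difference is in the stratification: you build the uniform scale $1/j$ into the sets $E_{k,j}$ from the outset, so that on $A_\varepsilon$ the two-sided Ahlfors bound holds at the common scale $r_{\varepsilon,x}\equiv 1/k_\varepsilon$. The paper instead defines its exhausting sets as level sets of the densities $\loden{n}$ and $\upden{n}$, so that $r_{\varepsilon,x}$ genuinely depends on $x$; as a consequence, when proving $\m(E)\le C\,\mathcal H^n(E)$ it must perform a second exhaustion (a subset $B_{\tilde\varepsilon}\subset A_\varepsilon$ on which $r_{\varepsilon,x}$ is bounded below) followed by a limit $\tilde\varepsilon\to 0$; your variant eliminates that extra step, at the harmless cost of proving a slightly stronger version of item \emph{ii)}. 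The other difference is that you cite Federer-type comparison theorems for item \emph{iii)}, whereas the paper carries out both Vitali covering arguments explicitly: the $5r$-covering lemma together with outer regularity of the finite measure $\m$ for $\mathcal H^n(E)\le C\,\m(E)$, and a direct covering estimate (using the geodesic assumption to compare radii of balls with their diameters, and the equivalence of Hausdorff and spherical Hausdorff measures) for the reverse inequality. That citation is legitimate, since your sets $E_{k,j}$ satisfy exactly the hypotheses of the standard metric-space statements and $\m$ is a finite Borel measure, hence outer regular; in a self-contained write-up you would want to either reproduce those covering arguments or give a precise reference.
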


\begin{proof}
    Fix $\varepsilon>0$. For every $k\in\N$, define the set 
    \begin{equation}
        A_k:=\left\{x\in\X:\tfrac1k<\loden{n}(x)\leq \upden{n}(x)<k\right\}.  
    \end{equation}
    By construction $A_k\subset A_{k+1}$ and, by the assumption on the densities, up to an $\m$-negligible set, $\X = \bigcup_{k\in\N}A_k$. Moreover, since $(\X,\di)$ is geodesic, the sets $A_k$ are also Borel. Therefore, choosing $k_\varepsilon\in\N$ sufficiently large, we find a set $A_\varepsilon:=A_{k_\varepsilon}\subset X$ satisfying item {\slshape i)}. Now, by construction, for $x\in A_\varepsilon$, we have 
    \begin{equation}
        \frac1{k_\varepsilon}<\liminf_{r\to 0}\frac{\m(B(x,r))}{r^n}\leq\limsup_{r\to 0}\frac{\m(B(x,r))}{r^n}<k_\varepsilon.
    \end{equation}
    Hence, given $\delta>0$, there exists $r_{\varepsilon,x}>0$ (depending on $\delta$) such that 
    \begin{equation}
        \frac1{k_\varepsilon}-\delta<\frac{\m(B(x,r))}{r^n}<k_\varepsilon+\delta,\qquad\forall\,r\in (0,r_{\varepsilon,x}].
    \end{equation}
    This shows item {\slshape ii)}. In addition, by the continuity $x\mapsto\m(B(x,r))$, we observe that the function $x\mapsto r_{\varepsilon,x}$ is measurable. We proceed to the proof of the last item. Consider a Borel set $E\subset A_\varepsilon$ and let $O\subset \X$ be any open set containing $E$. Given $\rho>0$, for every $x\in E$, let $\rho_x>0$ be sufficiently small so that $\rho_x\leq \min\{r_{\varepsilon,x},\rho\}$ and $B(x,\rho_x)\subset O$. Then, since $\{B(x,\rho_x)\}_{x\in E}$ is a covering of $E$ and $(\X,\di)$ is separable, there is a countable refinement i.e.\ $E\subset \bigcup_{i\in\N}B(x_i,\rho_i)$ with $\rho_i:=\rho_{x_i}$. By Vitali covering lemma, we may find $I\subset\N$ such that $B(x_i,\rho_i)\cap B(x_j,\rho_j)=\emptyset$ for every $i\neq j$ in $I$ and $E\subset\bigcup_{i\in I}B(x_i,5\rho_i)$. Then, we have: 
    \begin{equation}
        \mathcal H_{5\rho}^n(E)\leq \sum_{i\in I} \left[\diam\left(B(x_i,5\rho_i)\right)\right]^n \leq10^n\sum_{i\in I} \rho_i^n\leq 10^nC_\varepsilon \sum_{i\in I}\m(B(x_i,\rho_i))\leq 10^nC_\varepsilon\m(O).
    \end{equation}
    Letting $\rho\to 0$, we obtain that $ \mathcal H^n(E)\leq C(\varepsilon,n)\m(O)$. Since $O$ is an arbitrary open set containing $E$, we conclude that $ \mathcal H^n(E)\leq C(\varepsilon,n)\m(E)$.
    
    For the converse inequality, fix $\tilde\varepsilon>0$. Reasoning as in the first part of the proof, we find a constant $\rho_{\tilde\varepsilon}>0$ and Borel set $B_{\tilde\varepsilon}\subset A_\varepsilon$ with the property that $\m(A_\varepsilon\setminus B_{\tilde\varepsilon})<\tilde\varepsilon$ and $r_{\varepsilon,x}\geq \rho_{\tilde\varepsilon}$ for every $x\in B_{\tilde\varepsilon}$. Let $\rho>0$ be sufficiently small, so that $\rho<\min\{\rho_{\tilde\varepsilon},\diam(\X)/2\}$. We observe that, by construction, for every $x\in E\cap B_{\tilde\varepsilon}$, we have
    \begin{equation}
         \m(B(x,r))\leq C_\varepsilon r^n,\qquad\forall\, r\in [0,\rho].
    \end{equation}
    Now, let $\{B(x_i,\rho_i)\}_{i\in\N}$ a countable covering of $E\cap B_{\tilde\varepsilon}$, with $\rho_i\leq\rho$. Then, we deduce that
    \begin{equation}
        \m(E\cap B_{\tilde\varepsilon})\leq \sum_{i=1}^\infty \m(B(x_i,\rho_i)) \leq C_\varepsilon \sum_{i=1}^\infty \rho_i^n \leq C_\varepsilon \sum_{i=1}^\infty \left[\diam\left(B(x_i,\rho_i)\right)\right]^n.
    \end{equation}
    In the last inequality we used that $\rho_i \leq \diam\left(B(x_i,\rho_i)\right)$, which holds because $\rho_i \leq \diam(\X)/2$ and $(\X,\di)$ is geodesic. Since the covering was arbitrary and recalling that the Hausdorff measure and the spherical Hausdorff measure are equivalent, we finally obtain that
    \begin{equation}
        \m(E\cap B_{\tilde\varepsilon})\leq C'(\varepsilon,n)\mathcal H^n_\rho(E\cap B_{\tilde\varepsilon}).
    \end{equation}
    Letting $\rho\to0$, we find that $\m(E\cap B_{\tilde\varepsilon})\leq C'(\varepsilon,n)\mathcal H^n(E\cap B_{\tilde\varepsilon})\leq C'(\varepsilon,n)\mathcal H^n(E)$. To conclude, observe that, by construction of $B_{\tilde\varepsilon}$, 
    \begin{equation}
        \m(E)=\m(E\cap B_{\tilde\varepsilon})+\m(E\setminus B_{\tilde\varepsilon})\leq \m(E\cap B_{\tilde\varepsilon})+\tilde\varepsilon\leq C'(\varepsilon,n)\mathcal{H}^n(E)+\tilde\varepsilon.
    \end{equation}
    Since $\tilde\varepsilon>0$ is arbitrary, we conclude the proof.
\end{proof}

\begin{remark}
    Note that the assumption $\m(\X)<\infty$ is technical and, in the sequel, does not play a role. Indeed, we only deal with local properties of the reference measure, hence we can always restrict ourselves to a finite-measure subset of $\X$ by $\sigma$-compactness. 
\end{remark}

\begin{theorem}
\label{thm:rectifiability<5}
    Given $K\in\R$ and $N\in (1,\infty)$, let $(\X,\di,\m)$ be a $\cd(K,N)$ space. Assume that  
    \begin{itemize}
        \item[i)] for $\m$-a.e.\ $x\in\X$, $\Tan_{{\rm pGH}}(\X, \di,x)$ contains a single element, up to isometry;
        \item[ii)] there exists $n\in(0,5)$ such that $0<\loden{n}(x)\leq \upden{n}(x)<\infty$, for $\m$-a.e.\ $x\in\X$. 
    \end{itemize}
     Then:
     \begin{itemize}
 \item     $n$ is an integer;
 \item $(\X,\di,\m)$ is $(\m,n)$-rectifiable;
 \item for $\m$-a.e.\ $x\in\X$, $\Tan_{{\rm pmGH}}(\X, \di,\m,x)$ contains a single element (up to isomorphism) and its metric part is isometric to an $n$-dimensional Banach space.
 \end{itemize}
\end{theorem}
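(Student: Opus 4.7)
The plan is to identify, for $\m$-a.e.\ $x\in\X$, the metric measure tangent of $(\X,\di,\m)$ at $x$ as a finite-dimensional Banach space, and then apply the characterisation of rectifiability in Theorem \ref{thm:bate}. As a first step, note that the $\cd(K,N)$ assumption forces $\m$ to be locally uniformly doubling and $(\X,\di)$ to be geodesic, so Theorem \ref{thm:eld} applies: at $\m$-a.e.\ $x\in\X$, the unique element of $\Tan_{{\rm pGH}}(\X,\di,x)$ is a \sF Carnot group $(G_x,\di_{SF,x})$. Combining this with Proposition \ref{prop:tan=tan}, at $\m$-a.e.\ $x$ every $\mathcal Y\in\Tan_{{\rm pmG}}(\X,\di,\m,x)$ admits a representative $(\Y,\di^\Y,\n^\Y,y)$ that is also a pmGH tangent and whose measure $\n^\Y$ is locally $n$-Ahlfors regular; necessarily $(\Y,\di^\Y)$ is isometric to $(G_x,\di_{SF,x})$, and hence $n$ equals the homogeneous (= Hausdorff) dimension of $G_x$, which is a positive integer (Remark \ref{rmk:hom_dim}). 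Moreover, local $n$-Ahlfors regularity together with the dilation invariance of $(G_x,\di_{SF,x})$ forces $\n^\Y$ to be a positive constant multiple of $\mathcal H^n$.

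For the second step, I would invoke stability of the curvature-dimension condition under pmGH convergence: rescaling $\di$ by $r\downarrow 0$ sends the $\cd(K,N)$ condition into a $\cd(0,N)$ condition on the pmGH tangent. Therefore $(G_x,\di_{SF,x},c_x\mathcal H^n)$ is a $\cd(0,N)$ space for some $c_x>0$. Since $n\in(0,5)$, Theorem \ref{thm:aux_intro} forces $(G_x,\di_{SF,x})$ to be isometric to an $n$-dimensional Banach space $(\R^n,\normdotc_x)$. Because the normalisation in \eqref{eq:tangent_measure} pins down $c_x$ uniquely, the pmG (and pmGH) tangent of $(\X,\di,\m)$ at $x$ is unique up to isomorphism, equal to $[(\R^n,\normdotc_x,c_x\mathcal H^n,0)]$.

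The final step promotes this structural information to $(\m,n)$-rectifiability via Theorem \ref{thm:bate}. Up to $\sigma$-compactness I may assume $\m(\X)<\infty$, and then Lemma \ref{lem:gmt} furnishes sets $A_{\varepsilon_k}\uparrow \X$ (up to an $\m$-null set) on which $\m$ is equivalent to $\mathcal H^n$ and $\mathcal H^n(A_{\varepsilon_k})<\infty$. Fix $E=A_{\varepsilon_k}$ and $x\in E$ a $\mathcal H^n$-density point of $E$ at which the conclusion of the previous steps holds; then $\Theta^n_*[\mathcal H^n](E,x)>0$ and, since the densities do not feel the removal of an $\mathcal H^n$-negligible complement, the pmG tangent of $\mathcal H^n_{|E}$ at $x$ agrees with that of $\m$ (up to a normalisation constant) and is therefore an $n$-dimensional Banach space. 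Hence condition \slshape ii)\upshape{} in Theorem \ref{thm:bate} is met, so $E$ is $n$-rectifiable in the sense of Definition \ref{def:rect1}; taking the union of the Lipschitz covers of all $A_{\varepsilon_k}$ yields $(\m,n)$-rectifiability of $\X$ per Definition \ref{def:rect2}. The main technical obstacle I anticipate is the bookkeeping in this last step, namely verifying that the pmG tangents of the restricted measure $\mathcal H^n_{|E}$ coincide with those of $\m$ at almost every density point of $E$; this relies precisely on the uniform Ahlfors regularity obtained in Proposition \ref{prop:tan=tan} and on the $\m$–$\mathcal H^n$ equivalence furnished by Lemma \ref{lem:gmt}.
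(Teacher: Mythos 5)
Your proposal follows the paper's strategy step for step (Le Donne's theorem, Proposition \ref{prop:tan=tan}, stability of $\cd$ along the blow-up, Theorem \ref{thm:aux_intro}, and finally Bate's criterion applied to the sets produced by Lemma \ref{lem:gmt}), but there is one genuine gap in the identification of the tangent \emph{measure}. You assert that local $n$-Ahlfors regularity of $\n^\Y$, combined with the dilation invariance of $(G_x,\di_{SF,x})$, forces $\n^\Y$ to be a positive constant multiple of $\mathcal H^n$. This implication fails: an Ahlfors regular measure need not be proportional to the Hausdorff measure (already on $\R^n$ the measure $\bigl(1+\tfrac12\sin x_1\bigr)\mathcal{L}^n$ is locally Ahlfors regular but not a multiple of $\mathcal{L}^n$), and the group dilations act on the metric space, not on the particular tangent measure $\n^\Y$, which at a fixed base point $x$ has no reason to be self-similar. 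Since Theorem \ref{thm:aux_intro} concerns the Carnot group equipped with $\mathcal H^n$, you cannot apply it to $(G_x,\di_{SF,x},\n^\Y)$ as it stands. The paper closes this gap by a further blow-up: by \cite[Thm.\ 4.12]{proceedingsVarenna}, at $\n^\Y$-a.e.\ point of the Carnot group the tangent of $(G^x,\di^x_{SF},\n^\Y)$ is $(G^x,\di^x_{SF},\hat c_x\mathcal H^{n},\e_x)$, and the ``tangents of tangents are tangents'' principle \cite[Thm.\ 3.2]{MR3377394} then places $(G^x,\di^x_{SF},\hat c_x\mathcal H^{n},\e_x)$ inside $\Tan_{{\rm pmGH}}(\X,\di,\m,x)$; only at that stage do stability of $\cd$ and Theorem \ref{thm:aux_intro} apply.

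A related, smaller point: you invoke item \emph{ii)} of Theorem \ref{thm:bate}, which requires full uniqueness of the tangent including the normalising constant $c_x$, whereas the paper applies the weaker criterion \emph{iii)} (tangents uniformly bi-Lipschitz to $\R^n$) to deduce rectifiability first, and only afterwards recovers the uniqueness statement from Bate's theorem combined with the identity $\Tan_{{\rm pmG}}(\X,\di,\mathcal H^n_{|A_i^k},x)=\Tan_{{\rm pmG}}(\X,\di,\m_{|A_i^k},x)$. That identity is exactly the ``bookkeeping'' you flag at the end; the paper proves it by choosing $x$ to be an $\m$-Lebesgue point of the density of $\mathcal H^n$ with respect to $\m$ and a density point of $A_i^k$ for both measures, and estimating the rescaled integrals directly, which is the argument your sketch correctly anticipates but does not carry out.
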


\begin{proof}
    Without loss of generality, we may assume that $\supp(\m)=\X$. Firstly, $\m$ is a locally finite measure on a proper metric space, hence $\m$ is $\sigma$-finite. Thus, there exists a countable family $\{A_i\}_{i\in\N}$ of Borel sets, such that 
    \begin{equation}
        \X=\bigcup_{i\in\N}A_i,\qquad\m(A_i)<\infty, \text{ for every }i\in\N. 
    \end{equation}
    Secondly, given $i,k\in \N$, since $\m$ has positive and finite $n$-densities and $(\X,\di)$ is geodesic, we find a Borel subset $A_i^k\subset A_i$ satisfying items {\slshape i)-iii)} of Lemma \ref{lem:gmt}, with $\varepsilon=1/k$.  In addition, by inner regularity of $\m$, we may also assume that $A_i^k$ is closed. As a consequence of Lemma \ref{lem:gmt}, $\m$ is equivalent to the $n$-dimensional Hausdorff measure on $A_i^k$, the lower Hausdorff density $\Theta_*^n
[\mathcal H^n](A_i^k,\cdot)$ is positive $\mathcal H^n$-a.e., and $\mathcal H^n(A_i^k)>0$. Thus, we are in position to apply {\slshape iii)}  $\Rightarrow$ {\slshape i)} of Theorem \ref{thm:bate} to the set $A_i^k$, for every $i,k\in\N$. To conclude rectifiability, we ought to check that almost every tangent of $A_i^k$ is a finite-dimensional Banach space. 
    
    We claim that, for $\m$-a.e.\ $x\in\X$ and every $(\Y,\di^\Y,\n^\Y,y)\in\Tan_{{\rm pmGH}}(\X, \di,\m,x)$, $(\Y,\di^\Y)$ is isometric to a finite-dimensional Banach space. Firstly, as $(\X,\di,\m)$ is a $\cd(K,N)$ space, $\m$ is locally uniformly doubling. In addition, the assumption {\slshape i)} ensures that at $\m$-a.e.\ point $\Tan_{{\rm pGH}}(\X, \di,x)$ contains a single element. Thus, we may apply Theorem \ref{thm:eld} and deduce that there exists $\mathcal N\subset\X$ of vanishing $\m$-measure such that
    \begin{equation}
        \Tan_{{\rm pGH}}(\X, \di,x)=\{(G^x,\di^x_{SF},\e_x)\},\qquad\text{for every }x\in\X\setminus\mathcal N,
    \end{equation} 
    where $(G^x,\di^x_{SF})$ is a \sF Carnot group. Secondly, let $x\in\X\setminus\mathcal N$ and let $(\Y,\di^\Y,\n^\Y,y)\in\Tan_{{\rm pmGH}}(\X, \di,\m,x)$. Note that, by definition of pmGH convergence, $(\Y,\di^\Y,y)$ is isometric to $(G^x,\di^x_{SF},\e_x)$. Up to taking $x$ in the complement of a bigger $\m$-null set $\mathcal N$, following the proof of items {\slshape i)} and {\slshape iii)} of Proposition \ref{prop:tan=tan}, we find that $\n^\Y$ has full support and it is a locally $n$-Ahlfors regular measure. This implies that the Haudorff dimension of $(G^x,\di^x_{SF})$ is equal to $n \in (0,5)$. Therefore, $n=\mathcal Q^x$, where $\mathcal Q^x$ is the homogeneous dimension of $G^x$ defined in Remark \ref{rmk:hom_dim}, and it is an integer. Thirdly, according to \cite[Cor.\ 4.8]{proceedingsVarenna}, $\n^\Y$ is a bounded measure and we can apply \cite[Thm.\ 4.12]{proceedingsVarenna} to conclude that
    \begin{equation}
        \Tan_{{\rm pmGH}}(G^x,\di^x_{SF},\n^\Y,g)=\{(G^x,\di^x_{SF},\hat c_x\mathcal H^{n},\e_x)\},\qquad\text{for $\n^\Y$-a.e.\ }g\in G,
    \end{equation}
    where $\hat c_x>0$ is an explicit constant which depends only on $(G^x,\di^x_{SF})$. %This is true by the explicit expression of the constant and using the left-translations.
    At this point, we apply \cite[Thm.\ 3.2]{MR3377394}, stating that ``Tangents of tangents are tangents''. More precisely, we deduce that
    \begin{equation}
        (G^x,\di^x_{SF},\hat c_x\mathcal H^{n},\e_x) \in \Tan_{{\rm pmGH}}(\X,\di,\m,x),\qquad\text{for $\m$-a.e.\ }x \in \X. 
    \end{equation}
    As $(\X,\di,\m)$ is a $\cd(K,N)$ space, by stability of the $\cd$ condition, we obtain that $(G^x,\di^x_{SF},\hat c_x\mathcal H^{n})$ is a $\cd(0,N)$ space. Recalling that $G^x$ has homogeneous dimension $n\in(0,5)$, we can combine Corollary \ref{cor:classification} and Theorem \ref{thm:nocd_hei} to conclude that $G^x$ must be commutative and $n=\dim_{\rm Top}G^x$, meaning that $(G^x,\di^x_{SF})\cong (\R^n,\normdotc_x)$. This finally shows that $(\Y,\di^\Y)$ is isometric to the finite-dimensional Banach space $(\R^n,\normdotc_x)$, as claimed.
    
    The next step is to describe the set $\Tan_{{\rm pmG}}(\X, \di,\mathcal{H}^{n},x)$ in terms of the pmG tangents to $(\X,\di,\m)$ at a suitably chosen point $x\in\X$. Since $\mathcal H^n$ is equivalent to $\m$ on the set $A_i^k$, there exists a function $\rho\in L^1_{loc}(A_i^k,\m)$ such that $\mathcal H^n=\rho \m$ and $C_{1/k}^{-1}\leq\rho\leq C_{1/k}$, $\m$-a.e.\ on $A_i^k$, where $C_{1/k}\geq 1$ is determined by item {\slshape ii)} of Lemma \ref{lem:gmt}. Hence, let $x\in(\X\setminus \mathcal N)\cap A_i^k$ such that it is a density point of $A_i^k$ with respect to both $\m$ and $\mathcal H^n$ and it is a $\m$-Lebesgue point of $\rho$. Note that, as $\m$ and $\mathcal H^n$ are equivalent, we are discarding a $\m$-negligible subset of $A_i^k$. 
    
    We claim that 
    \begin{equation}
    \label{eq:tan_haus=tan_m}
        \Tan_{{\rm pmG}}(\X, \di,\mathcal{H}^{n}_{|A_i^k},x)=\Tan_{{\rm pmG}}(\X, \di,\m_{|A_i^k},x).
    \end{equation}
    Firstly, observe that since $\mathcal H^n_{|A_i^k}=\rho \m_{|A_i^k}$, then, for every $r>0$, the rescaled measures at $x$ defined as in \eqref{eq:tangent_measure} satisfy
    \begin{multline}
    \label{eq:rescaled_haus}
        \big[\mathcal H^n_{|A_i^k}\big]_r^x=\left(\int_{B(x,r)\cap A_i^k}1-\frac1r\di(\cdot,x)\de\mathcal H^n\right)^{-1}\mathcal H^n_{|A_i^k}\\
        =\left(\int_{B(x,r)\cap A_i^k}1-\frac1r\di(\cdot,x)\de\m\right)
        \left(\int_{B(x,r)\cap A_i^k}\Big(1-\frac1r\di(\cdot,x)\Big)\rho\de\m\right)^{-1}\rho\big[\m_{|A_i^k}\big]_r^x.
    \end{multline}
    In particular, $\big[\mathcal H^n_{|A_i^k}\big]_r^x$ is absolutely continuous with respect to $\m_r^x$, with density given by $h_r^x\rho$, where $h_r^x$ is the constant defined by \eqref{eq:rescaled_haus}. By our choice of the base point $x$, we have that $h_r^x\to 1/\rho(x)$ as $r\to 0^+$. Secondly, let $\mathcal Y=[(\Y,\di^\Y,\n^\Y,y)]\in \Tan_{{\rm pmG}}(\X, \di,\m_{|A_i^k},x)$. Then, by definition, there exist a sequence $\{r_j\}_{j\in\N\cup\{\infty\}}$ converging to $0$ and an effective realisation $\{(\Z,\di^Z),\iota_j\}$ such that 
    \begin{equation}
    \label{eq:n_tan_meas}
        (\iota_j)_\#\big[\m_{|A_i^k}\big]_{r_j}^x \weakto (\iota_\infty)_\#\n^\Y\qquad \text{and}\qquad\iota_j(x) \to \iota_\infty(y)\in \supp((\iota_\infty)_\#\n^\Y)\ \text{ in }\Z.    
    \end{equation}
    Then, Claim \eqref{eq:tan_haus=tan_m} is verified if we show that, for every $\varphi\in C_{bs}(\Z)$, it holds
    \begin{equation}
    \label{eq:claim_tan_measure_haus}
        \int_\Z \varphi\de(\iota_j)_\#\big[\mathcal H^n_{|A_i^k}\big]_{r_j}^x\xrightarrow{j\to\infty} \int_\Z \varphi\de (\iota_\infty)_\#\n^\Y.
    \end{equation}
    Note that, by \eqref{eq:rescaled_haus}, we have 
    \begin{equation}
        \int_\Z \varphi\de(\iota_j)_\#\big[\mathcal H^n_{|A_i^k}\big]_{r_j}^x=h_{r_j}^x\int_\Z \varphi\de\big(\rho(\iota_j)_\#\big[\m_{|A_i^k}\big]_{r_j}^x\big)=h_{r_j}^x\int_\X \rho(\varphi\circ\iota_j)\de\big[\m_{|A_i^k}\big]_{r_j}^x.
    \end{equation}
    Furthermore, a straightforward estimate shows that 
    \begin{multline*}
        \left|h_{r_j}^x\int_\X \rho(\varphi\circ\iota_j)\de\big[\m_{|A_i^k}\big]_{r_j}^x- \int_\X (\varphi\circ \iota_\infty)\de\n^\Y\right|\\ \leq \left|h_{r_j}^x\int_\X (\rho-\rho(x))(\varphi\circ\iota_j)\de\big[\m_{|A_i^k}\big]_{r_j}^x\right|+\left|\rho(x)h_{r_j}^x\int_\X(\varphi\circ\iota_j)\de\big[\m_{|A_i^k}\big]_{r_j}^x- \int_\X( \varphi\circ \iota_\infty)\de\n^\Y\right|.
    \end{multline*}
    The second term in the right-hand side of the inequality converges to $0$ by \eqref{eq:n_tan_meas} and since $\rho(x)h_{r_j}^x\to 1$. Hence, the convergence \eqref{eq:claim_tan_measure_haus} is valid provided that 
    \begin{equation}
        \label{eq:refined_claim}
        \int_\X (\rho-\rho(x))(\varphi\circ\iota_j)\de\big[\m_{|A_i^k}\big]_{r_j}^x\xrightarrow{j\to\infty}0.
    \end{equation}
    The function $\varphi$ has bounded support, thus there exists $R>0$ such that $\supp(\varphi)\subset B^\Z(\iota_j(x),R)$ for every $j$ sufficiently large. Then, $\varphi=\chi_{B^\Z(\iota_j(x),R)}\varphi$ and 
    \begin{equation}
    \begin{split}
        \int_\X (\rho-\rho(x))(\varphi\circ\iota_j)\de\big[\m_{|A_i^k}\big]_{r_j}^x &=\int_\X (\rho-\rho(x))((\chi_{B^\Z(\iota_j(x),R)}\varphi)\circ\iota_j)\de\big[\m_{|A_i^k}\big]_{r_j}^x \\ 
        &=\int_{B^{\di_{r_j}}(x,R)} (\rho-\rho(x))(\varphi\circ\iota_j)\de\big[\m_{|A_i^k}\big]_{r_j}^x\\
        &=\int_{B^\di(x,r_jR)} (\rho-\rho(x))(\varphi\circ\iota_j)\de\big[\m_{|A_i^k}\big]_{r_j}^x.      
    \end{split}
    \end{equation}
    Therefore, for a suitable constant $C>0$, we can estimate
    \begin{multline}
    \label{eq:final_estimate}
        \left|\int_\X (\rho-\rho(x))(\varphi\circ\iota_j)\de\big[\m_{|A_i^k}\big]_{r_j}^x\right| \leq \int_\X\left|(\rho-\rho(x))(\varphi\circ\iota_j)\right|\de\big[\m_{|A_i^k}\big]_{r_j}^x\\ \leq C\|\varphi\|_{L^\infty(\Z)}\fint_{B^\di(x,r_jR)}\left|(\rho-\rho(x))\right|\de\m_{|A_i^k},
    \end{multline}
    where in the last inequality, we used item {\slshape ii)} of Lemma \ref{lem:gmt} and \eqref{eq:lower_bound_rescaling}. Finally, the right-hand side of \eqref{eq:final_estimate} converges to $0$ since $x$ was chosen to be an $\m$-Lebesgue point of $\rho$. This shows \eqref{eq:refined_claim} and, consequently, the convergence in \eqref{eq:claim_tan_measure_haus}. Thus, we conclude the proof of Claim \eqref{eq:tan_haus=tan_m}.

    Lastly, since the base point $x$ was chosen to be a density point for $A_i^k$ with respect to $\m$ and $\mathcal H^n$, applying \cite[Cor.\ 5.7]{MR4506771} and by Claim \eqref{eq:tan_haus=tan_m}, we have that 
    \begin{equation}
    \label{eq:tan=tan=tan=tan}
        \Tan_{{\rm pmG}}(\X, \di,\mathcal{H}^{n},x)=\Tan_{{\rm pmG}}(\X, \di,\mathcal{H}^{n}_{|A_i^k},x)=\Tan_{{\rm pmG}}(\X, \di,\m_{|A_i^k},x)=\Tan_{{\rm pmG}}(\X, \di,\m,x).
    \end{equation} 
    Furthermore, letting $\mathcal Y\in \Tan_{{\rm pmG}}(\X, \di,\m,x)$, by Proposition \ref{prop:tan=tan}, there exists a representative $(\Y,\di^\Y,\n^\Y,y)\in \mathcal Y$ belonging to $\Tan_{{\rm pmGH}}(\X, \di,\m,x)$ and such that $\n^\Y$ has full support. As the metric side of the tangent is independent on the measure, by the first part of the proof, we conclude that $(\Y,\di^\Y)$ is isometric to $(\R^n,\normdotc_x)$. Thus, there exists $K_x\geq 1$ for which $\mathcal Y\in \widetilde{\mathcal L}_{K_x}$. Finally, applying Theorem \ref{thm:bate}, we conclude that $A_i^k$ is $n$-rectifiable, for every $i,k\in\N$. Since the sets $\{A_i^k\}_{k\in\N}$ exhausts $A_i$ up to a $\m$-negligible set, we conclude that $A_i$ has positive $\mathcal H^n$-measure and it is $n$-rectifiable up to a $\m$-negligible set for every $i\in \N$. Therefore, we conclude that $\X$ is $(\m,n)$-rectifiable, in the sense of Definition \ref{def:rect2}. 
    
    By Theorem \ref{thm:bate}, we also deduce that for $\m$-a.e.\ $x\in \X$
    \begin{equation}
    \label{eq:final_tan}
        \Tan_{{\rm pmG}}(\X, \di,\mathcal H^{n},x) = \{[(\R^{n},\normdotc_x,c_x\mathcal H^{n},0)]\}.
    \end{equation}
    For $\m$-a.e.\ $x\in\X$ for which \eqref{eq:final_tan} holds, there exist $i,k\in\N$ such that $x\in A_i^k$ and \eqref{eq:tan=tan=tan=tan} holds. We deduce that
    \begin{equation}
        \Tan_{{\rm pmGH}}(\X, \di,\m,x) = \{(\R^{n},\normdotc_x,c_x\mathcal H^{n},0)\},
    \end{equation}
    up to isomorphism.
\end{proof}

The strategy developed in the proof Theorem \ref{thm:rectifiability<5} is quite flexible and works for every dimension $n$. We formalise this in Theorem \ref{thm:meta_rectifiability}, whose statement is given assuming the validity of the following conjecture. This conjecture is supported by various works in sub-Riemannian and \sF geometry (see in particular \cite{Juillet2020,MR4562156,MR4623954} for the sub-Riemannian case and \cite{magnabosco2023failure,borza2024measure,borza2024curvatureexponentsubfinslerheisenberg} for the \sF case). 

\begin{conj}
\label{conj:carnot_nocd}
    Let $(G,\di_{SF})$ be a \sF Carnot group of homogeneous dimension $\mathcal{Q}$. Assume that the metric measure space $(G,\di_{SF},\mathcal H^\mathcal{Q})$ is a $\cd(0,N)$ space for some $N > 1$. Then, $G$ is a finite-dimensional Banach space, i.e.\ $G$ is commutative, $\g=V_1$ and $\mathcal{Q}=\dim_{\rm Top}G$.
\end{conj}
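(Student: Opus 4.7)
The plan is to argue by contradiction, following the philosophy of Theorems \ref{thm:aux_intro} and \ref{thm:main3} but without the classification of low-dimensional Carnot groups that underpins the former. Suppose $G$ is non-commutative, so it has step $\s\geq 2$ with a nontrivial bracket in $V_2$, and assume $(G,\di_{SF},\mathcal H^{\mathcal{Q}})$ satisfies $\cd(0,N)$; the goal is to exhibit absolutely continuous probability measures $\mu_0,\mu_1$ whose displacement interpolation violates \eqref{eq:CDcond}.

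The first ingredient would be an inductive reduction on the step $\s$. The natural tool is the Carnot projection $\pi\colon G\to G/\exp(V_\s)$, whose target is again a sub-Finsler Carnot group, now of step $\s-1$, with the quotient norm on $V_1$; the sub-Finsler distance descends, and the pushforward of $\mathcal H^{\mathcal{Q}}$ is proportional to the Hausdorff measure on the quotient of its homogeneous dimension. If one proves that $\cd(0,N')$ descends along $\pi$ for some $N'$ depending on $N$ and $\dim V_\s$, a finite induction reduces everything to the step-$2$ case. For step $2$, one could adapt the arguments of \cite{magnabosco2023failure,borza2024measure}: pick $Z\in V_2$ and $X_0,Y_0\in V_1$ with $[X_0,Y_0]=Z$, concentrate $\mu_0$ and $\mu_1$ on thin horizontal slabs whose abelianization projections are transverse, and show that any optimal transport must generate nontrivial $Z$-holonomy, forcing $\mu_t$ to concentrate on a set so small that the Rényi entropy lower bound in \eqref{eq:CDcond} is violated; the directions transverse to $\mathrm{span}\{X_0,Y_0\}$ should contribute only Euclidean factors and cause no problem.

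The main obstacle, and the reason the conjecture remains open, is the step reduction: the $\cd$ condition is notoriously ill-behaved under metric measure quotients, and the fibered structure over the step-$(\s-1)$ base need not interact nicely with optimal transport downstairs. A plausible alternative is to bypass the step reduction entirely and argue directly in arbitrary step by combining the blow-up/iteration mechanism developed in the proof of Theorem \ref{thm:main3}, which concentrates optimal plans on lifts of straight lines in the abelianization, with a density-level entropy analysis. Remark \ref{rmk:final_rmk} warns, however, that contracting-set estimates alone are insufficient: in sub-Finsler groups the sets of $t$-midpoints can coincide with rescaled balls, so the argument must exploit the full entropy form of \eqref{eq:CDcond} on absolutely continuous measures and not merely its Brunn-Minkowski shadow.
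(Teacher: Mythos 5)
The statement you are addressing is not proved in the paper at all: it is stated as Conjecture \ref{conj:carnot_nocd} and enters only as a hypothesis in Theorem \ref{thm:meta_rectifiability}. The paper establishes it solely for homogeneous dimension $\mathcal Q<5$, by quoting Theorem \ref{thm:aux_intro} from \cite{magnabosco2023failure,borza2024measure} together with the classification of Corollary \ref{cor:classification}, and in arbitrary step it proves a genuinely different statement, namely the failure of the \emph{non-collapsed} $\MCP(0,N)$ condition (Theorem \ref{thm:noMCP}). Your text is therefore a research programme rather than a proof, as you yourself acknowledge; still, it is worth recording precisely where each of your two branches breaks down, because neither can be repaired by the techniques in the paper.

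In the first branch you need $\cd(0,N')$ to descend along $\pi_{\s-1}\colon G\to G/\exp(V_\s)$. This is a quotient by a non-compact central subgroup; although the disintegration of $\mathcal H^{\mathcal Q}$ over the fibers is well behaved and $\pi_{\s-1}$ is a submetry, optimal plans downstairs need not admit optimal lifts, and no descent theorem for the entropy inequality \eqref{eq:CDcond} along such quotients is available --- this is exactly the kind of statement whose absence keeps the conjecture open. Even granting the reduction, the step-$2$ endpoint is not covered by \cite{magnabosco2023failure,borza2024measure}: those works treat the Heisenberg group, i.e.\ rank $2$ and $\mathcal Q=4$, and your ``thin transverse slab'' construction would have to control simultaneously all the independent brackets of a higher-rank step-$2$ group (free step-$2$ groups, groups with large centers), which is not done anywhere. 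In the second branch, the blow-up mechanism of Theorem \ref{thm:noMCP} cannot be transplanted: its engine is the rigidity \eqref{eq:rigiditymut}, which is obtained by testing $\MCP(0,N)$ with $N$ \emph{equal} to the homogeneous dimension against the exact scaling $\mathcal H^N(\bar B(\e,r))=r^N\mathcal H^N(\bar B(\e,1))$. Under the hypotheses of the conjecture one only knows $\cd(0,N)\Rightarrow\MCP(0,N)$ for some $N$ which may strictly exceed $\mathcal Q$; then the contraction inequality gives $\mu_t\le t^{-N}\mathcal H^{\mathcal Q}(\bar B(\e,1))^{-1}\mathcal H^{\mathcal Q}|_{\bar B(\e,t)}$, whose right-hand side has total mass $t^{\mathcal Q-N}\ge 1$, so no equality of measures is forced and the entire iteration (Claims 1 and 2) never gets started. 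So both routes have genuine, unfilled gaps, and the statement remains a conjecture exactly as the paper presents it.
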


\begin{theorem}
\label{thm:meta_rectifiability}
    Given $K\in\R$ and $N\in (1,\infty)$, let $(\X,\di,\m)$ be a $\cd(K,N)$ space. Assume that  
    \begin{itemize}
        \item[i)] for $\m$-a.e.\ $x\in\X$, $\Tan_{{\rm pGH}}(\X, \di,x)$ contains a single element, up to isometry;
        \item[ii)] there exists $n>0$ such that $0<\loden{n}(x)\leq \upden{n}(x)<\infty$, for $\m$-a.e.\ $x\in\X$. 
    \end{itemize}
    If Conjecture \ref{conj:carnot_nocd} holds, then $n$ is an integer and $(\X,\di,\m)$ is $(\m,n)$-rectifiable. Moreover, for $\m$-a.e.\ $x\in\X$, $\Tan_{{\rm pmGH}}(\X, \di,\m,x)$ contains a single element (up to isomorphism) and its metric part is isometric to a $n$-dimensional Banach space.
\end{theorem}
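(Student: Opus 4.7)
The plan is to observe that the proof of Theorem \ref{thm:rectifiability<5} uses the hypothesis $n<5$ in exactly one place: to force a \sF Carnot group that, equipped with its Hausdorff measure, is a $\cd(0,N)$ space to be a finite-dimensional Banach space. That step is handled there by combining the classification of low-dimensional Carnot groups (Corollary \ref{cor:classification}) with the failure of $\cd(K,N)$ in the \sF Heisenberg group (Theorem \ref{thm:nocd_hei}). Under Conjecture \ref{conj:carnot_nocd}, this same conclusion is available at any homogeneous dimension, so the proof of Theorem \ref{thm:rectifiability<5} transports verbatim, and this is the strategy I would follow.

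Concretely, I would begin by reducing to a countable collection of Borel subsets $A_i^k\subset\X$ of finite $\m$-measure on which $\m$ and $\mathcal H^n$ are comparable with a uniform constant, exactly as in the first part of the proof of Theorem \ref{thm:rectifiability<5} via $\sigma$-finiteness and Lemma \ref{lem:gmt}. On each $A_i^k$ we have $\mathcal H^n(A_i^k)>0$ and $\Theta^n_*[\mathcal H^n](A_i^k,\cdot)>0$ at $\mathcal H^n$-a.e.\ point, which are the measure-theoretic hypotheses of Bate's Theorem \ref{thm:bate}. It then suffices, as in the proof of Theorem \ref{thm:rectifiability<5}, to verify that almost every pmG tangent of $A_i^k$ lies in $\widetilde{\mathcal L}_{K_x}$ for some $K_x\geq 1$.

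Next I would analyse the tangents. Since $(\X,\di,\m)$ is $\cd(K,N)$, the measure $\m$ is locally uniformly doubling, so assumption {\slshape i)} together with Le Donne's Theorem \ref{thm:eld} gives, at $\m$-a.e.\ $x$, that the unique pGH tangent is a \sF Carnot group $(G^x,\di^x_{SF})$. Proposition \ref{prop:tan=tan} then furnishes a pmGH representative whose measure is locally $n$-Ahlfors regular, forcing the homogeneous dimension of $G^x$ to equal $n$, in particular $n\in\N$. Using \cite[Cor.\ 4.8]{proceedingsVarenna} and \cite[Thm.\ 4.12]{proceedingsVarenna} together with the ``tangents of tangents are tangents'' principle of \cite[Thm.\ 3.2]{MR3377394}, one obtains that $(G^x,\di^x_{SF},\hat c_x\mathcal H^n,\e_x)\in\Tan_{\rm pmGH}(\X,\di,\m,x)$ for $\m$-a.e.\ $x$; by stability of the curvature-dimension condition under pmGH convergence, this \sF Carnot group equipped with $\hat c_x\mathcal H^n$ satisfies $\cd(0,N)$. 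This is the precise point at which Conjecture \ref{conj:carnot_nocd} is applied, yielding that $G^x$ is a finite-dimensional Banach space $(\R^n,\normdotc_x)$.

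From here the proof concludes by the same final step as Theorem \ref{thm:rectifiability<5}: choosing $x$ to be a density point of $A_i^k$ for both $\m$ and $\mathcal H^n$, and a $\m$-Lebesgue point of the density $\rho$ between $\mathcal H^n_{|A_i^k}$ and $\m_{|A_i^k}$, the Lebesgue-point computation establishes the identification $\Tan_{\rm pmG}(\X,\di,\mathcal H^n,x)=\Tan_{\rm pmG}(\X,\di,\m,x)$; Proposition \ref{prop:tan=tan} ensures that each pmG tangent has a pmGH representative with full-support measure, whose metric part is then $(\R^n,\normdotc_x)\in\mathcal L_{K_x}$; Bate's Theorem \ref{thm:bate} delivers $n$-rectifiability of $A_i^k$ and identifies the pmG tangent uniquely as $[(\R^n,\normdotc_x,c_x\mathcal H^n,0)]$, which transfers to a unique pmGH tangent of $(\X,\di,\m,x)$. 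Thus the only genuine obstacle is the Carnot-group input itself: all the hard work is concentrated in Conjecture \ref{conj:carnot_nocd}, and given it, what remains is a faithful rewriting of the earlier argument with ``$n\in(0,5)$'' replaced by ``$n\in(0,\infty)$''.
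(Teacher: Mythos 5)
Your proposal is correct and follows essentially the same route as the paper: the paper's own proof of Theorem \ref{thm:meta_rectifiability} simply re-runs the argument of Theorem \ref{thm:rectifiability<5}, replacing the one step where $n<5$ is used (Corollary \ref{cor:classification} plus Theorem \ref{thm:nocd_hei}) with an application of Conjecture \ref{conj:carnot_nocd} to the tangent $(G^x,\di^x_{SF},\hat c_x\mathcal H^{n})$, which is a $\cd(0,N)$ space by stability. Your write-up is in fact a more detailed account of the same argument than the paper provides.
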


\begin{proof}
     Following the proof of Theorem \ref{thm:rectifiability<5}, we see that, for $\m$-a.e.\ $x\in \X$, and for every $(\Y, \di^\Y,\n^\Y,x)\in\Tan_{{\rm pmGH}}(\X, \di,\m,x)$, the metric space $(\Y,\di^\Y)$ is isometric to a \sF Carnot group $(G^x,\di_{SF}^x)$ with $\dim_{\text{Haus}}G^x=n$. Moreover, we get that for $\m$-a.e.\ $x\in \X$ 
    \begin{equation}
        (G^x,\di^x_{SF},\hat c_x\mathcal H^{N},\e_x) \in \Tan_{{\rm pmGH}}(\X,\di,\m,x). 
    \end{equation}
     Therefore, combining Conjecture \ref{conj:carnot_nocd} with the stability of the $\cd$ condition, we deduce that $G^x$ is commutative for $\m$-a.e.\ $x\in\X$. Proceeding as in the proof of Theorem \ref{thm:meta_rectifiability}, we obtain the desired conclusion.
\end{proof}

\begin{remark}
    \label{rmk:stratification}
    The techniques employed to prove Theorems \ref{thm:rectifiability<5} and \ref{thm:meta_rectifiability} extends to the case when there is a stratification, namely when the assumption {\slshape ii)} is replaced by the following: 
    {\slshape 
    \begin{itemize}
        \item[ii')] there exist $n>0$ and a countable collection of $\m$-measurable sets $\{R_{k_i}\}_{i\in \N}$, covering $\X$ up to an $\m$-negligible set, such that $0<k_i\leq n$ and
            \begin{equation}
                0<\loden{k_i}(x)\leq\upden{k_i}(x),\qquad \text{for $\m$-a.e.\ }x\in R_{k_i}.
            \end{equation}
    \end{itemize}}
    \noindent In this case, we would conclude that there are only a finite number of strata, indexed by integers $k\in\{1,\ldots,\lfloor n\rfloor\}$, each of which is $(\m,k)$-rectifiable. For example, the $\cd(K,N)$ space built in \cite{breaking} satisfies the assumptions {\slshape i)} and {\slshape ii')}, and it has indeed rectifiable strata.

    The main reason for not stating the theorems in this generality is because, at the moment, we do not have an operative way to define a stratification. This is in contrast with $\RCD$ spaces, where a stratification can be defined using the splitting theorem, see \cite{MR3945743}.
\end{remark}

\section{Rectifiability of non-collapsed \texorpdfstring{$\MCP(K,N)$}{MCP(K,N)} spaces with unique tangents}
\label{sec:rectifiability_non-collapsed_mcp}

In this last section we prove the second main result, namely Theorem \ref{thm:rectifiability_MCP}, showing the rectifiability of non-collapsed $\MCP(K,N)$ spaces having unique metric tangent almost everywhere. The proof of Theorem \ref{thm:rectifiability_MCP} relies on the general strategy developed in Section \ref{sec:rectCD} and on a new result (Theorem \ref{thm:noMCP}) about the failure of the measure contraction property in \sF Carnot groups.

Let $(G,\di_{SF})$ be a \sF Carnot group. Recall (see the discussion around \eqref{eq:stratification}) that $\g := T_\e G$ admits a stratification $\g = V_1 \oplus\ldots\oplus V_\s$, where  $\s\geq 1$ is the step of the group. According to \cite[Prop.\ 2.9]{EeroELD}, there are canonical sub-Finsler structures on $G/[G,G] \cong V_1$ and $G/\exp(V_\s)$ such that the projections
\begin{equation}
    \pi:G \to G/[G,G]\qquad \text{and} \qquad \pi_{\s-1}:G \to G/\exp(V_\s)
\end{equation}
are submetries. In the following, we denote by $(G/[G,G],\di'_{SF})$ and $(G/\exp(V_\s),\di''_{SF})$ the resulting \sF Carnot groups. Observe that, since $G/[G,G] \cong V_1$ is commutative, the \sF distance $\di'_{SF}$ is a norm distance and the metric space $(G/[G,G],\di'_{SF})$ is a finite-dimensional Banach space.

The next result corresponds to {\cite[Thm.\ 3.2]{EeroELD}}. The statement is slightly modified for our purposes, but it can be shown following the proof of {\cite[Thm.\ 3.2]{EeroELD}}.

\begin{theorem}\label{thm:HLD}
    Let $(G, \di_{SF})$ be a sub-Finsler Carnot group of step $\s$ and let $\gamma:[0,1] \to G$ be a geodesic. Then, there exist constants $C>0$ and $\delta>0$ such that for every $t,s\in [0,\delta]$ it holds that
    \begin{equation}
        \di_{SF}(\gamma(s),\gamma(t)) - C \di_{SF}(\gamma(s),\gamma(t))^\frac{\s}{\s-1} \leq \di''_{SF}(\pi_{\s-1}\circ\gamma(s),\pi_{\s-1}\circ\gamma(t))\leq \di_{SF}(\gamma(s),\gamma(t)).
    \end{equation}
\end{theorem}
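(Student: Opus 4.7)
The upper bound follows immediately from the fact that $\pi_{\s-1}: G\to G/\exp(V_\s)$ is a submetry: its differential at $\e$ restricts to an isometry of the horizontal layers $V_1\to V_1$, so $\pi_{\s-1}\circ\gamma$ is a horizontal curve in the quotient whose pointwise velocity has the same norm as that of $\gamma$. Consequently,
\begin{equation}
    \di''_{SF}(\pi_{\s-1}\gamma(s),\pi_{\s-1}\gamma(t)) \;\leq\; \ell(\pi_{\s-1}\circ\gamma|_{[s,t]}) \;=\; \ell(\gamma|_{[s,t]}) \;=\; \di_{SF}(\gamma(s),\gamma(t)).
\end{equation}

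For the lower bound, we may assume (after left-translating and reparameterising) that $\gamma(0)=\e$ and $\gamma$ is unit-speed, and analyse the deficit for $t$ small. Writing $\gamma(t)=\exp(w_1(t)+\cdots+w_\s(t))$ in exponential coordinates adapted to the stratification \eqref{eq:stratification}, the Ball--Box theorem in the step-$(\s-1)$ Carnot group $G/\exp(V_\s)$ yields
$\di''_{SF}(\e,\pi_{\s-1}\gamma(t)) \asymp \max_{1\leq i\leq \s-1}\|w_i(t)\|^{1/i}$,
while the unit-speed assumption forces $t=\di_{SF}(\e,\gamma(t))\asymp \max_{1\leq i\leq \s}\|w_i(t)\|^{1/i}$. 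Thus the discrepancy between the two distances is controlled by the top-layer coordinate $w_\s(t)$, for which the naive Ball--Box bound $\|w_\s(t)\|\leq Ct^\s$ only produces the coarse estimate $\di''_{SF}(\e,\pi_{\s-1}\gamma(t)) \geq c\,t$, i.e., mere comparability of the two distances.

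To upgrade to the sharp exponent $\s/(\s-1)$, one exploits the geodesic property of $\gamma$ via a blow-up argument in the spirit of \cite{EeroELD}: the intrinsic dilations $\delta_\lambda$ commute with $\pi_{\s-1}$ and rescale both sub-Finsler distances linearly, so the curves $\gamma_\lambda(\tau):=\delta_{1/\lambda}\gamma(\lambda\tau)$ are unit-speed geodesics that admit (by Arzel\`a--Ascoli and local compactness of $G$) a subsequential uniform-on-compacts limit $\gamma_0$ as $\lambda\to 0$, itself a geodesic in $G$. The scaling identity
\begin{equation}
    \lambda - \di''_{SF}(\e,\pi_{\s-1}\gamma(\lambda)) \;=\; \lambda\,\bigl[1-\di''_{SF}(\e,\pi_{\s-1}\gamma_\lambda(1))\bigr]
\end{equation}
then reduces the claim to showing $1-\di''_{SF}(\e,\pi_{\s-1}\gamma_\lambda(1)) \leq C\lambda^{1/(\s-1)}$ for small $\lambda$. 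This quantitative decay is extracted from a Baker--Campbell--Hausdorff expansion in the stratified coordinates combined with the Pontryagin maximum principle, which constrains the velocity profile of $\gamma$ to be extremal and forces the top-layer coordinate along a geodesic to contract at a strictly faster rate in $\lambda$ than along a generic horizontal curve. The principal obstacle is making this decay uniform over geodesics in a neighbourhood of the base point; this uniformity, which is the technical heart of \cite[Thm.\ 3.2]{EeroELD}, is what ultimately delivers the claimed constants $C>0$ and $\delta>0$ after translating back through the scaling identity.
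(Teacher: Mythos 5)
You should first note that the paper does not actually prove Theorem \ref{thm:HLD}: it states explicitly that the result ``corresponds to \cite[Thm.\ 3.2]{EeroELD}'' and ``can be shown following the proof of \cite[Thm.\ 3.2]{EeroELD}''. In that sense your reliance on \cite{EeroELD} for the hard part is consistent with what the authors themselves do. Your upper bound is correct (indeed it is immediate: $\pi_{\s-1}$ is a submetry, hence $1$-Lipschitz), and your scaling identity is also correct, since $\pi_{\s-1}\circ\delta_\lambda=\delta_\lambda\circ\pi_{\s-1}$ and both $\di_{SF}$ and $\di''_{SF}$ are $1$-homogeneous under dilations.

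However, read as a standalone argument, your treatment of the lower bound has a genuine gap: the inequality $1-\di''_{SF}(\e,\pi_{\s-1}\gamma_\lambda(1))\leq C\lambda^{1/(\s-1)}$ to which you reduce is exactly the statement of the theorem rewritten after rescaling, and nothing in the remaining paragraph establishes it. The appeal to ``Baker--Campbell--Hausdorff plus the Pontryagin maximum principle'' does not constitute a proof, and it also mischaracterizes the mechanism in \cite{EeroELD}: PMP is not the tool there (and is delicate in the sub-Finsler setting, where the reference norm need not be smooth or strictly convex and abnormal minimizers are unavoidable). The actual argument is a metric competitor construction exploiting minimality: if the projected curve were too short over $[s,t]$, one lifts a shorter horizontal curve from $G/\exp(V_\s)$ back to $G$ with the correct endpoint modulo $\exp(V_\s)$, and then corrects the residual top-layer error; the cost of correcting an error in $V_\s$ of size $h$ using a lever arm of length comparable to $\di_{SF}(\gamma(s),\gamma(t))$ is what produces the exponent $\s/(\s-1)$, and comparing with the length of $\gamma|_{[s,t]}$ yields the bound. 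Two further loose ends in your write-up: the Arzel\`a--Ascoli blow-up limit $\gamma_0$ is introduced but never used, and the theorem requires the estimate for all pairs $s,t\in[0,\delta]$ with constants uniform in the base point, a uniformity you flag as ``the principal obstacle'' but do not address. As it stands, your proposal proves the easy half and correctly normalizes the problem, but the quantitative core remains a citation rather than an argument.
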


\begin{lemma}\label{lem:closetoGeo}
    Let $(\Z,\di)$ be a locally compact metric space and fix $\bar z\in \Z$. For every $\theta>0$, there exists $\rho>0$ with the following property: given any continuous curve $\gamma\in C([0,1],\Z)$ with $\gamma(0)=\bar z$ such that 
    \begin{equation}
        (1-\rho)\ell |t_2-t_1|\leq \di(\gamma(t_1),\gamma(t_2)) \leq (1+\rho)\ell |t_2-t_1| \qquad \forall\, t_1,t_2\in [0,1],
    \end{equation}
    for some $\ell\in[0,1]$, then there exists $\xi\in \Geo(\Z)$ with $\xi(0)= \bar z$ and $\xi(1)\in \bar B(\bar z,1)$, such that $\sup_{t\in[0,1]}\di(\gamma(t),\xi(t))<\theta$.
\end{lemma}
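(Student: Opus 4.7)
The plan is a standard Arzelà--Ascoli compactness argument by contradiction. Assume the conclusion fails: there exist $\theta_0>0$, sequences $\{\rho_n\}_{n\in\N}\subset (0,1)$ with $\rho_n\downarrow 0$ and $\{\ell_n\}_{n\in\N}\subset [0,1]$, and curves $\gamma_n\in C([0,1],\Z)$ with $\gamma_n(0)=\bar z$ satisfying
\begin{equation}
    (1-\rho_n)\ell_n|t_2-t_1|\leq \di(\gamma_n(t_1),\gamma_n(t_2))\leq (1+\rho_n)\ell_n|t_2-t_1|,\qquad\forall\,t_1,t_2\in [0,1],
\end{equation}
and yet $\sup_{t\in[0,1]}\di(\gamma_n(t),\xi(t))\geq \theta_0$ for every $\xi\in\Geo(\Z)$ with $\xi(0)=\bar z$ and $\xi(1)\in\bar B(\bar z,1)$.

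Each $\gamma_n$ is $(1+\rho_n)\ell_n$-Lipschitz, hence uniformly $2$-Lipschitz, and its image is contained in $\bar B(\bar z,2)$. Using local compactness of $\Z$ at $\bar z$ (in the intended application $(\Z,\di)$ will be a proper \sF Carnot group, so this is automatic), we may assume that a neighbourhood of $\bar z$ containing all images $\gamma_n([0,1])$ is compact. Thus $\{\gamma_n\}_{n\in\N}$ is equicontinuous and pointwise precompact, so by the Arzelà--Ascoli theorem there is a subsequence $\gamma_{n_k}\to\gamma_\infty$ in $C([0,1],\Z)$. Up to a further extraction, $\ell_{n_k}\to\ell_\infty\in[0,1]$.

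Passing to the limit $k\to\infty$ in the two-sided bi-Lipschitz estimate, we obtain
\begin{equation}
    \di(\gamma_\infty(t_1),\gamma_\infty(t_2))=\ell_\infty|t_2-t_1|,\qquad\forall\,t_1,t_2\in[0,1].
\end{equation}
In particular, $\gamma_\infty(0)=\bar z$, $\di(\bar z,\gamma_\infty(1))=\ell_\infty\leq 1$ so $\gamma_\infty(1)\in\bar B(\bar z,1)$, and $\di(\gamma_\infty(s),\gamma_\infty(t))=|t-s|\di(\gamma_\infty(0),\gamma_\infty(1))$ for all $s,t\in[0,1]$. Hence $\gamma_\infty\in\Geo(\Z)$ (a constant curve if $\ell_\infty=0$), and the uniform convergence $\gamma_{n_k}\to\gamma_\infty$ contradicts the standing assumption.

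The only mildly delicate point is confining all the $\gamma_n$ to a common compact set so that Arzelà--Ascoli applies; this is taken care of by local compactness of $\Z$ near $\bar z$ together with the uniform Lipschitz bound forced by $\ell_n\leq 1$ and $\rho_n<1$. Once compactness is secured, extracting the limit and verifying that it is a genuine geodesic from $\bar z$ into $\bar B(\bar z,1)$ is a direct limit argument.
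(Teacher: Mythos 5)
Your proposal is correct and follows essentially the same route as the paper: a contradiction argument via Arzel\`a--Ascoli, extracting a uniformly convergent subsequence whose limit is a genuine geodesic from $\bar z$ into $\bar B(\bar z,1)$. If anything, you are more careful than the paper about the one delicate point, namely that local compactness plus the uniform Lipschitz bound must be used to confine all the curves to a common compact set before Arzel\`a--Ascoli applies (automatic in the intended application, where $\Z$ is proper).
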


\begin{proof}
    Assume by contradiction this is not the case. Then, there exist a sequence of continuous curves $\{\gamma_i\}_{i\in \N}\subset C([0,1],\Z)$ and a sequence   $\{\rho_i\}_{i\in \N}\subset \R_+$ converging to $0$  such that
    \begin{equation}\label{eq:almostgeodesic}
        (1-\rho_i)\ell_i  |t_2-t_1|\leq \di(\gamma_i(t_1),\gamma_i(t_2)) \leq (1+\rho_i)\ell_i |t_2-t_1|\qquad \forall\, t_1,t_2\in [0,1],
    \end{equation}
    for some $\ell_i\in[0,1]$, and $\sup_{t\in[0,1]}\di(\gamma_i(t),\xi(t))\geq \theta$ for every $\xi\in \Geo(\Z)$ with $\xi(0)= \bar z$ and $\xi(1)\in \bar B(\bar z,1)$. Up to taking a subsequence of indexes and using the Arzelà-Ascoli theorem, we can suppose that $\gamma_i \to \bar\gamma\in C([0,1],\Z)$ uniformly and $\ell_i\to \bar \ell\in [0,1]$. Passing to the limit in \eqref{eq:almostgeodesic} we deduce that $\bar \gamma$ is a geodesic of length $\bar \ell$, and such that $\bar\gamma(0)=\bar z$ and $\bar\gamma(1)\in \bar B(\bar z,1)$. Thus, for $i$ sufficiently large, $\sup_{t\in[0,1]}\di(\gamma_i(t),\bar\gamma(t))< \theta$, giving a contradiction.
\end{proof}

\begin{theorem}\label{thm:noMCP}
     Let $(G, \di_{SF})$ be a sub-Finsler Carnot group with homogeneous dimension $N$. Then, either $G$ is commutative or $(G, \di_{SF}, \mathcal{H}^N)$ does not satisfy the $\MCP(0,N)$ condition. 
\end{theorem}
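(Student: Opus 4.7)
The plan is to argue by contradiction, assuming $G$ is non-commutative (so the step satisfies $\s\ge 2$) and $(G,\di_{SF},\mathcal{H}^N)$ satisfies $\MCP(0,N)$. We will construct, via a blow-up and iteration procedure, a geodesic plan between $\delta_\e$ and $\mu:=\mathcal{H}^N|_{\bar B(\e,1)}/\mathcal{H}^N(\bar B(\e,1))$ whose curves project to geodesics of the abelianisation $V_1\cong G/[G,G]$. Since such curves must lie in $\exp(V_1)$, a null set for $\mathcal{H}^N$ when $\s\ge 2$, this will contradict the requirement that the $t$-marginal of the plan equal $(\delta_t)_\#\mu\ll\mathcal{H}^N$.

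First, apply $\MCP(0,N)$ with target $\mu$ to obtain $\eta\in\Prob(\Geo(G))$ with $(e_0)_\#\eta=\delta_\e$, $(e_1)_\#\eta=\mu$, and $(e_t)_\#\eta\le \mathcal{H}^N/(t^N\mathcal{H}^N(\bar B(\e,1)))$. Since $\di_{SF}$ is $1$-homogeneous under dilations, the $t$-midpoint set $M_t(\e,\bar B(\e,1))$ coincides with $\bar B(\e,t)$ and $\mathcal{H}^N(\bar B(\e,t))=t^N\mathcal{H}^N(\bar B(\e,1))$; comparing total masses forces the $\MCP$ inequality to be sharp:
\begin{equation*}
    (e_t)_\#\eta=(\delta_t)_\#\mu,\qquad t\in[0,1].
\end{equation*}
Next, for $\lambda\in(0,1]$ define the blow-up $\Psi_\lambda\colon\Geo(G)\to\Geo(G)$, $(\Psi_\lambda\gamma)(t):=\delta_{1/\lambda}(\gamma(\lambda t))$. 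By \eqref{eq:commutation_product-dilation} and the homogeneity of $\di_{SF}$, $\Psi_\lambda\gamma$ is a geodesic of the same length as $\gamma$, and the plan $\eta_\lambda:=(\Psi_\lambda)_\#\eta$ inherits the marginals $\delta_\e,\mu$ together with $(e_t)_\#\eta_\lambda=(\delta_t)_\#\mu$. Since every $\eta_\lambda$ is supported on geodesics of length at most $1$ starting at $\e$---a compact subset of $C([0,1],G)$ by Arzelà--Ascoli---tightness yields a weak limit $\eta_\infty$ along some $\lambda_n\downarrow 0$ with the same marginals.

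The structural claim is that, writing $G^{(\s-1)}:=G/\exp(V_\s)$, for $\eta_\infty$-a.e.\ $\gamma$ the projection $\pi_{\s-1}\circ\gamma$ is a geodesic of $G^{(\s-1)}$. Applying Theorem \ref{thm:HLD} on $[0,\lambda]$ and rescaling by $1/\lambda$, the error term $C\di_{SF}(\gamma(\lambda t_1),\gamma(\lambda t_2))^{\s/(\s-1)}/\lambda$ is of order $\lambda^{1/(\s-1)}$ uniformly over $t_1,t_2\in[0,1]$ and $\gamma\in\supp\eta$ (since $\ell(\gamma)\le 1$). Introducing the non-negative continuous functional
\begin{equation*}
    F(\gamma):=\sup_{t_1,t_2\in[0,1]}\big[\ell(\gamma)|t_1-t_2|-\di''_{SF}(\pi_{\s-1}\gamma(t_1),\pi_{\s-1}\gamma(t_2))\big],
\end{equation*}
we have $F\circ\Psi_\lambda\to 0$ uniformly on $\supp\eta$, so dominated convergence gives $\int F\,\de\eta_\lambda\to 0$; the lower semicontinuity of $F$ together with the weak convergence $\eta_{\lambda_n}\weakto\eta_\infty$ then forces $F=0$ $\eta_\infty$-a.e., establishing the claim (alternatively, Lemma \ref{lem:closetoGeo} can be used to promote approximate projections to actual geodesics in the limit). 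Since dilations on $G$ descend to and commute with the projections onto each quotient Carnot group, this property is preserved under subsequent blow-ups; Theorem \ref{thm:HLD} applied in the lower-step group $G^{(\s-1)}$ to the genuine geodesic $\pi_{\s-1}\circ\gamma$ then produces, after another blow-up and limit, curves whose further projection to $G^{(\s-2)}:=G/\exp(V_{\s-1}\oplus V_\s)$ is a geodesic. Iterating $\s-1$ times yields a plan $\tilde\eta$ with marginals $\delta_\e,\mu$ and $(e_t)_\#\tilde\eta=(\delta_t)_\#\mu$, such that for $\tilde\eta$-a.e.\ $\tilde\gamma$ the projection $\pi\circ\tilde\gamma$ to $V_1$ is a geodesic, hence of the form $t\mapsto tv$ for some $v\in V_1$. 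Since a horizontal curve in $G$ issuing from $\e$ is uniquely determined by its projection to $V_1$, necessarily $\tilde\gamma(t)=\exp(tv)$, so $\supp((e_t)_\#\tilde\eta)\subset\exp(V_1)$. In exponential coordinates, $\mathcal{H}^N$ is a constant multiple of the Lebesgue measure on $\g\cong\R^n$ while $\exp(V_1)$ corresponds to the proper subspace $V_1\subsetneq\g$ (proper since $\s\ge 2$), so $\mathcal{H}^N(\exp(V_1))=0$, contradicting $(e_t)_\#\tilde\eta=(\delta_t)_\#\mu\ll\mathcal{H}^N$ with positive density on $\bar B(\e,t)$.

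The main obstacle will be executing the blow-up/limit argument rigorously: promoting the local almost-geodesic estimate of Theorem \ref{thm:HLD} (valid only on an interval $[0,\delta]$) to a uniform statement on $[0,1]$ via the rescaling, and then showing that the ``projects to a geodesic'' property persists through weak convergence of plans. A secondary delicate point is the iteration bookkeeping: verifying that properties gained in earlier steps persist through subsequent blow-ups and passages to the limit, which relies on the compatibility between dilations on $G$ and the induced dilations on each quotient. Finally---as anticipated in Remark \ref{rmk:final_rmk}---one must derive the contradiction directly from the definition of $\MCP(0,N)$ rather than from its usual corollary on the contraction rate of $t$-midpoints, since in sub-Finsler Carnot groups the set $M_t(\e,\bar B(\e,1))$ can coincide with the rescaled ball $\bar B(\e,t)$ regardless of commutativity.
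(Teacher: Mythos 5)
Your overall architecture coincides with the paper's: derive the rigidity $(e_t)_\#\eta=\mathcal{H}^N(\bar B(\e,t))^{-1}\mathcal{H}^N|_{\bar B(\e,t)}$ from $\MCP(0,N)$ and the homogeneity of $\mathcal H^N$, blow up the optimal geodesic plan, and iterate until the plan concentrates on curves projecting to geodesics of the abelianisation. The first two stages are essentially sound; the non-uniformity over $\supp\eta$ of the constants $C,\delta$ in Theorem \ref{thm:HLD}, which you correctly flag as the delicate point, is handled in the paper by the exhaustion $\supp(\eta)=\bigcup_n R_{n,1/n}$ and sacrificing an $\varepsilon$ of mass, and your functional $F$ combined with Lemma \ref{lem:closetoGeo} would close that step.

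The genuine gap is in your final contradiction. You assert that a geodesic of $G/[G,G]\cong(V_1,\normdot)$ issuing from the origin must be of the form $t\mapsto tv$, hence that its (unique) horizontal lift is the one-parameter subgroup $t\mapsto\exp(tv)$, so that $(e_t)_\#\tilde\eta$ would be concentrated on the $\mathcal H^N$-null set $\exp(V_1)$. This holds only when the reference norm $\normdot$ on $V_1$ is strictly convex. For a general norm there are many non-affine geodesics from the origin (for $\normdot_\infty$ on $\R^2$, any $t\mapsto(t,f(t))$ with $f$ $1$-Lipschitz and $f(0)=0$), and the horizontal lift of such a curve does not stay in $\exp(V_1)$: already in the Heisenberg group it acquires a vertical component equal to the signed area swept by the projection. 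This is precisely the obstruction discussed in Remark \ref{rmk:final_rmk}: in the quasi-Heisenberg group the $t$-midpoint sets equal the full balls $\bar B(\e,t)$, so no argument concluding that $e_t(\tilde P)$ is $\mathcal H^N$-null can succeed in general. The paper's Claim 2 replaces your step by a quantitative one: the uniform bound \eqref{eq:uniformvertical} on $\p_{\mathcal V_2}\circ(\p_{\mathcal V_1}|_{\dis_g})^{-1}$ shows that every curve in $\tilde P$ remains in the cone $\mathscr{C}_K=\{(v_1,v_2):\norm{v_2}'\leq K\norm{v_1}\}$, and the contradiction comes from $\mathcal{H}^N\big(\bar B(\e,t)\setminus\mathscr{C}_K\big)>0$, not from $e_t(\tilde P)$ being null. (A minor further slip: $M_t(\e,\bar B(\e,1))$ need not coincide with $\bar B(\e,t)$; only the inclusion $M_t(\e,\bar B(\e,1))\subset\bar B(\e,t)$ holds in general, which is all the rigidity step actually uses.)
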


\begin{proof}
    If $G$ is commutative there is nothing to prove. If $G$ is not commutative, assume  by contradiction that $(G, \di_{SF}, \mathcal{H}^N)$ satisfies the $\MCP(0,N)$ condition. Let $\delta_\e$ be the Dirac delta measure centered at $\e$, let $\mathcal{H}^N(\bar B(\e,1))^{-1}\mathcal{H}^N|_{\bar B(\e,1)}\in \ProbTwo(G)$ and consider the $W_2$-Wasserstein geodesic $\{\mu_t\}_{t\in [0,1]}$ connecting them. Then, by $\MCP(0,N)$, for every $t\in[0,1]$, it holds that
    \begin{equation}
    \label{eq:mcp_def2}
        \frac{1}{\mathcal{H}^N(\bar B(\e,1))}\mathcal{H}^N\geq t^N \mu_t.
    \end{equation}
    Recall that, by homogeneity of $\mathcal H^N$ with respect to the dilations of $G$, we have 
    \begin{equation}
    \label{eq:hom_HN}
        \mathcal{H}^N \big( \bar B(\e, r)\big) = \mathcal{H}^N(\bar B(\e,1)) \cdot r^N, \qquad \forall r>0.
    \end{equation}
    Then, since $\mu_t$ is supported on $\bar B(\e,t)$, combining \eqref{eq:mcp_def2} with \eqref{eq:hom_HN}, we deduce that
    \begin{equation}
        \mu_t \leq  \frac{1}{t^N\mathcal{H}^N(\bar B(\e,1))}\mathcal{H}^N|_{\bar B(\e,t)}= \frac{1}{\mathcal{H}^N(\bar B(\e,t))}\mathcal{H}^N|_{\bar B(\e,t)}. 
    \end{equation}
    As both the right-hand side and the left-hand side are probability measures, then the equality must hold, that is 
    \begin{equation}\label{eq:rigiditymut}
        \mu_t= \frac 1{ \mathcal{H}^N(\bar B(\e,t))} \mathcal{H}^N|_{\bar B(\e,t)}, \qquad \forall t\in (0,1].
    \end{equation} 
    Now consider any $\eta \in \Prob(\Geo(G))$ inducing the Wasserstein geodesic $\{\mu_t\}_{t\in [0,1]}$, meaning that $(e_t)_\#\eta=\mu_t$ for every $t\in[0,1]$. By definition, $\eta$ is supported on geodesics $\gamma:[0,1]\to G$ with $\gamma(0)=\e$ and  $\gamma(1)\in \bar B(\e,1)$, thus having length at most $1$. 
    
    For every $s\in (0,1]$, we define the $s$-rescaling of $\eta$ as 
    \begin{equation}
        \eta_s= (\res_0^s)_\# (\delta_{1/s})_\# \eta,
    \end{equation}
    where the map $\res_0^s:C([0,1],G)\to C([0,1],G)$ is defined as 
    \begin{equation}
        \res_0^s (\gamma) (t) = \gamma(st),\qquad\forall\,t\in[0,1].
    \end{equation}
    Observe that, as a consequence of \eqref{eq:rigiditymut},
    \begin{equation}\label{eq:rigiditymut2}
        (e_t)_\#\eta_s= (e_{st})_\# (\delta_{1/s})_\# \eta= (\delta_{1/s})_\#(e_{st})_\#  \eta =\frac 1{ \mathcal{H}^N(\bar B(\e,t))}  \mathcal{H}^N|_{\bar B(\e,t)}, \qquad \forall t\in (0,1].
    \end{equation}
    Consider a sequence $\{s_i\}_{i\in \N}$ converging to $0$. Up to subsequences, Prokhorov theorem ensures that $\eta_{s_i} \weakto \bar \eta \in \Prob(\Geo(G))$. Passing to the limit in \eqref{eq:rigiditymut2}, we get
    \begin{equation}
        (e_t)_\#\bar\eta= \frac 1{ \mathcal{H}^N(\bar B(\e,t))}\mathcal{H}^N|_{\bar B(\e,t)}, \qquad \forall t\in (0,1].
    \end{equation}
    
    \medskip
    
    \noindent\textbf{Claim 1.} The measure $\bar \eta$ is concentrated on the set 
    \begin{equation}
        P := \big\{ \gamma \in \Geo(G) \,:\,\gamma(0)=\e,\,\gamma(1)\in\bar B(\e,1)\text{ and } \pi_{\s-1}\circ\gamma \in \Geo(G/\exp(V_\s))\big\}. 
    \end{equation} 

    \medskip 
    
    \noindent Fix two constants $\varepsilon, \theta>0$. For every $C,\delta>0$, define the measurable set 
    \begin{equation}
        R_{C,\delta}:= \{ \gamma \in \supp(\eta) \, :\, \text{Theorem \ref{thm:HLD} holds with $C$ and $\delta$}\}. 
    \end{equation}
    By Theorem \ref{thm:HLD}, we clearly have that 
    \begin{equation}
        \supp(\eta) = \bigcup_{n\in \N} R_{n,1/n}.
    \end{equation}
    In addition, for $n\leq m$, it holds that $R_{n,1/n}\subset R_{m,1/m}$.
    Therefore, there exists $\bar n\in\N$ such that $\eta(R_{\bar n,1/\bar n})> 1-\varepsilon$. For every $s\in (0,1]$, consider the set 
    \begin{equation}
        R_{\bar n,1/\bar n}^s:= \{ \res_0^s \circ \,\delta_{1/s}(\gamma) \,:\,\gamma \in R_{\bar n,1/\bar n}\} \subset \Geo(G).
    \end{equation}
    Observe that, by the definition of $\eta_s$, it holds that $\eta_s(R_{\bar n,1/\bar n}^s)>1-\varepsilon$. Now, noting that $R_{\bar n,1/\bar n}^s$ only contains geodesics of length at most $1$, we can apply Lemma \ref{lem:closetoGeo} and find $\rho$ corresponding to the $\theta$ we fixed at the beginning. Let $s>0$  be sufficiently small such that $s<1/\bar n$ and $\bar n s ^\frac{1} {\s-1}<\rho$.  Let $\tilde\gamma_s \in R^s_{\bar n,1/\bar n}$. Then there exists $\gamma \in R_{\bar n,1/\bar n}$ so that $\tilde \gamma_s= \res_0^s \circ\, \delta_{1/s}(\gamma)$; therefore,  for every $t_1,t_2\in[0,1]$, it holds that
    \begin{equation}
    \begin{split}
        \di''_{SF}\big(\pi_{\s-1}\circ\tilde\gamma_s(t_1),\pi_{\s-1}\circ\tilde\gamma_s(t_2)\big)  &=\di''_{SF}\big(\delta_{1/s}(\pi_{\s-1}\circ\gamma(st_1)),\delta_{1/s}(\pi_{\s-1}\circ\gamma(st_2))\big)  \\
        &=\frac{1}{s}\di''_{SF}\big(\pi_{\s-1}\circ\gamma(st_1),\pi_{\s-1}\circ\gamma(st_2)\big) \\
        &\geq \frac 1s \Big[\di_{SF}\big(\gamma(s t_1),\gamma(st_2)\big) - \bar n \di_{SF}\big(\gamma(s t_1),\gamma(st_2)\big)^\frac{\s}{\s-1} \Big] \\
        &\geq\di_{SF}\big(\delta_{1/s}(\gamma(s t_1)),\delta_{1/s}(\gamma(st_2))\big)\big(1 - \bar n s ^\frac{1} {\s-1}|t_2-t_1|^\frac{1} {\s-1}\big)\\
        &\geq \di_{SF}\big(\tilde\gamma_s( t_1)),\tilde\gamma_s(t_2)\big)(1 - \rho),
    \end{split}
    \end{equation}
    where, in the first inequality, we used that $\gamma \in R_{\bar n,1/\bar n}$.
    On the other hand, as $\pi_{\s-1}$ is a submetry, we have that for every $t_1,t_2\in[0,1]$
    \begin{equation}
        \di''_{SF}\big(\pi_{\s-1}\circ\tilde\gamma_s(t_1),\pi_{\s-1}\circ\tilde\gamma_s(t_2)\big)  \leq 
        \di_{SF}\big(\tilde\gamma_s( t_1)),\tilde\gamma_s(t_2)\big).
    \end{equation}
    Thus, applying Lemma \ref{lem:closetoGeo}, we find $\xi\in \Geo(G/\exp(V_\s))$ with $\xi(0)= \e$ and $\xi(1)\in \bar B(\e,1)$, such that $\sup_{t\in[0,1]}\di(\tilde\gamma_s(t),\xi(t))<\theta$. Since $\tilde\gamma_s$ was an arbitrary curve in $R_{\bar n,1/\bar n}^s$, we have proved that, for every $s>0$ sufficiently small,  
    \begin{equation}
        R_{\bar n,1/\bar n}^s \subset \bigg\{\sigma\in \Geo(G) \,:\, \inf_{\substack{\xi\in \Geo(G/\exp(V_\s))\\ \xi(0)=\e,\,\, \xi(1)\in \bar B(\e,1)}} \sup_{t\in[0,1]}\di(\sigma(t),\xi(t))\leq\theta \bigg \}=: P_\theta.
    \end{equation} 
    It follows that, for $i$ sufficiently large, $\eta_{s_i} (P_\theta)\geq \eta_{s_i} (R_{\bar n,1/\bar n}^{s_i})>1-\varepsilon$. Since, by definition, $P_\theta$ is a closed set in $C([0,1],G)$, we conclude that 
    \begin{equation}
        \bar \eta (P_\theta) \geq \limsup_{i\to\infty} \eta_{s_i} (P_\theta) >1-\varepsilon.
    \end{equation}
    As $\varepsilon$ and $\theta$ were arbitrary and $P= \cap_{\theta>0} P_\theta$, we conclude that $\bar \eta (P) = 1$. 
    
    \medskip
    
    If the step $\s$ is larger than $2$, we can reiterate this blow-up procedure until we find $\tilde \eta\in \Prob(\Geo(G))$ concentrated on the set 
    \begin{equation}
        \tilde P := \big\{ \gamma \in \Geo(G) \,:\,\gamma(0)=\e,\,\gamma(1)\in\bar B(\e,1)\text{ and } \pi\circ\gamma \in \Geo(G/[G,G])\big\} 
    \end{equation}
    and such that 
    \begin{equation}
    \label{eq:mu_t}
        (e_t)_\#\tilde\eta= \frac 1{\mathcal{H}^N(\bar B(\e,t))} \mathcal{H}^N|_{\bar B(\e,t)}, \qquad \forall t\in (0,1].
    \end{equation}

    \medskip
    
    \noindent \textbf{Claim 2.}
    For every $t\in[0,1]$, it holds that $\mathcal{H}^N\big(\bar B(\e,t) \setminus e_t(\tilde P)\big)>0$.

    \medskip
    
    Using the exponential map of the group, we can identify the Lie group $(G,\cdot)$ with $(\R^n,\star)$, where $n= \dim V_1 + \dots + \dim V_\s$ and the group operation $\star$ is defined using the Baker-Campbell-Hausdorff formula. In this identification, we decompose $G \cong \mathcal V_1 \oplus \mathcal V_2$ where $\mathcal V_1=V_1$ and $\mathcal V_2=V_2 \oplus \cdots \oplus V_\s$ and we consider the (component-wise) projection maps $\p_{\mathcal V_1}:\R^n\to \mathcal V_1$ and  $\p_{\mathcal V_2}:\R^n\to \mathcal V_2$. Observe that, in exponential coordinates, $G/[G,G]\cong \mathcal V_1$ as normed spaces and $\pi=\p_{\mathcal{V}_1}$. Moreover, for every $g\in G$, its coordinate representation in $\R^n$ is $\log(g)$ and thus we can identify $\dis_g$ as the linear subspace of $\R^n$ given by $(\hat L_{\log(g)})_{*,0}V_1$, where $\hat L_v$ is the left-multiplication by $v$ in $(\R^n,\star)$. Finally, we endow $\mathcal V_2$ with the Euclidean norm, denoted by $\normdot'$, and we recall that $\mathcal V_1$ is naturally endowed with the norm $\normdot$ inducing the sub-Finsler structure.
    
    Let $i:\mathcal V_1\to\R^n$ be the canonical inclusion of $\mathcal V_1$ in $\R^n$ and observe that by definition $p_{\mathcal V_1}\circ i=\text{Id}$. Then, for every $g\in G$, we see that $p_{\mathcal V_1}\circ (\hat L_{\log(g)})_{*,0}\circ i=p_{\mathcal V_1}\circ i=\text{Id}$. Indeed, given any $v\in\mathcal V_1$, by the Baker-Campbell-Hausdorff formula, we obtain that 
    \begin{equation}
        (\hat L_{\log(g)})_{*,0}(v)=\left.\frac{\de}{\de t}\right|_{t=0} \log(g)\star tv = v+w, 
    \end{equation}
    where $w\in\mathfrak g^2=[\mathfrak g,\mathfrak g]$. Hence $p_{\mathcal V_1}((\hat L_{\log(g)})_{*,0}(v))=p_{\mathcal V_1}(v+w)=v$. This shows that $\p_{\mathcal V_1}|_{\dis_{g}}$ is invertible with inverse given by $(\p_{\mathcal V_1}|_{\dis_{g}})^{-1}=(\hat L_{\log(g)})_{*,0}\circ i$. In addition, we easily deduce that the map $(\p_{\mathcal V_1}|_{\dis_{g}})^{-1}$ depends smoothly on $g$.
    
    Then, by compactness, there exist a constant $K>0$ such that 
    \begin{equation}\label{eq:uniformvertical}
        \norm{\p_{\mathcal V_2}\circ (\p_{\mathcal V_1}|_{\dis_{g}})^{-1}  (v)}' \leq K \norm{v}, \qquad \forall\, g\in \bar B(\e,1),\, v\in \mathcal V_1.
    \end{equation}
    
    Now, consider any $\gamma\in \tilde P$ and call $\tilde \gamma= \pi\circ\gamma\in \Geo(G/[G,G])$. In particular, $\tilde \gamma$ is absolutely continuous and $\norm{\tilde \gamma'(t)}= l$ for $\Leb^1$-a.e.\ $t\in [0,1]$, where $l$ is the length of $\tilde \gamma$ in $G/[G,G]\cong (\mathcal V_1,\normdot)$. The curve $\gamma$ is itself absolutely continuous and we know that 
    \begin{equation}
        \tilde\gamma'(t) = \pi_* (\gamma'(t)) = \pi(\gamma'(t)),\qquad\text{for } \Leb^1\text{-a.e.\ } t\in [0,1],
    \end{equation}
    where in the second identity we used the linearity of $\pi$ in exponential coordinates. 
    Therefore, since $\pi=\p_{\mathcal{V}_1}$ and $\gamma'(t)\in \dis_{\gamma(t)}$, we have that
    \begin{equation} 
        \gamma'(t)= (\p_{\mathcal V_1}|_{\dis_{\gamma(t)}})^{-1}  (\tilde\gamma'(t)), \qquad\text{for } \Leb^1\text{-a.e.\ } t\in [0,1].
    \end{equation}
    Applying the estimate \eqref{eq:uniformvertical}, we see that 
    \begin{equation}
        \norm{\p_{\mathcal V_2} (\gamma'(t))}' \leq K\norm{\tilde\gamma'(t)}=K l, \qquad\text{for } \Leb^1\text{-a.e.\ } t\in [0,1].
    \end{equation}
    In particular, being $\tilde \gamma$ a geodesic in a normed space, we have $\norm{\tilde\gamma(t)}=lt$ for every $t\in[0,1]$. Thus,
    \begin{equation}
        \norm{\p_{\mathcal V_2} (\gamma(t))}' \leq  \int_0^t \norm{\p_{\mathcal V_2} (\gamma'(s))}' \de s \leq Klt=K\norm{\tilde\gamma(t)},\qquad\forall\,t\in [0,1].
    \end{equation}
    Finally, we obtain that 
    \begin{equation}
        \text{Im} (\gamma) \subset \{(v_1,v_2)\in \mathcal{V}_1\oplus \mathcal{V}_2\,:\, \norm{v_2}'\leq K \norm{v_1}\}=: \mathscr{C}_K.
    \end{equation}
    As $\gamma$ was arbitrary, we conclude that for every $t\in[0,1]$
    \begin{equation}
        e_t(\tilde P) \subset \mathscr{C}_K \cap \bar B(\e,t).
    \end{equation}
    However, for every $t\in [0,1]$, the set $\bar B(\e,t) \setminus \mathscr{C}_K$ has positive measure and this concludes the proof of Claim 2.
    
    \medskip
    
    \noindent In conclusion,  for every $t\in[0,1]$, $(e_t)_\# \tilde\eta$ is concentrated on $e_t(\tilde P)$. However, \eqref{eq:mu_t} implies $(e_t)_\# \tilde\eta$ is supported on $\bar B(\e,t)$. This is in contradiction with the result of Claim 2. 
\end{proof}

\begin{remark}
Theorem \ref{thm:noMCP} is in line with other results in \sr and \sF setting studying the curvature exponent, i.e.\ the optimal dimensional parameter $N$ for which the measure contraction property $\MCP(K,N)$ holds for some $K$. Indeed, as a consequence of \cite[Prop.\ 5.49]{MR3852258} and \cite[Thm.\ D]{MR3852258} (see also \cite{MR3110060, MR3502622,MR3848070}), one sees that the curvature exponent of any equiregular \sr manifold (thus of every \sr Carnot group) is greater than or equal to the Hausdorff dimension plus one. An analogous result is proved for \sF Heisenberg groups in \cite{borza2024curvatureexponentsubfinslerheisenberg} (see also \cite{borzatashiro}). The statement of Theorem \ref{thm:noMCP} proves that that \emph{in every \sF Carnot group} the curvature exponent is strictly greater than the Hausdorff dimension.
\end{remark}

\begin{remark}[About the proof of Theorem \ref{thm:noMCP}]
\label{rmk:final_rmk}
(i) In order to contradict \eqref{eq:rigiditymut}, it would be sufficient to prove that, for some $t\in(0,1)$, the $t$-midpoint set 
\begin{align}
        M_t(\e,\bar B(\e,1)) := 
        \{
            x \in G \, : \, x = \gamma(t)
                 , \, 
            \gamma \in \Geo(G)
                 , \, 
            \gamma(0) =\e 
                \ \text{and} \  
            \gamma(1) \in \bar B (\e,1)
        \},
    \end{align}
    does not contain $\bar B(\e,t)$. Indeed, since $\eta\in \Prob(\Geo(G))$, we have that $\supp \mu_t \subset M_t(\e,\bar B(\e,1)) \subset \bar B(\e,t)$.
    
    It is possible to prove that, if the reference norm of the \sF structure on $G$ is strictly convex, then 
    \begin{equation}
        M_t(\e,\bar B(\e,1)) \subsetneq \bar B(\e,t).
    \end{equation}
    However, this is not true in general, as one can check in the so called quasi-Heisenberg group $\R\times \hei$. This is $G=\R^4$ equipped with the non-commutative group law, defined  by
\begin{equation}
    (w, x, y, z) \cdot (w', x', y', z') = \bigg(w+w', x+x',y+y',z+z'+\frac12(xy' - x'y)\bigg),
\end{equation}
for all $(w,x, y, z), (w',x', y', z')\in\R^4$,
with identity element $\e=(0,0,0,0)$. Define the left-invariant vector fields
\begin{equation}
    W:=\partial_w, \qquad X:=\partial_x-\frac{y}2\partial_z,\qquad Y:=\partial_y+\frac{x}2\partial_z.
\end{equation}
Then, $[W,X]=[W,Y]=0$, $[X,Y]=\partial_z=:Z$ and the Lie algebra of $G$ admits the stratification $\g=V_1\oplus V_2$,
where $V_1=\text{span}\{W,X,Y\}$ and $V_2:=\text{span}\{Z\}$. If we equip $V_1$ with the norm 
\begin{equation}
    \norm{(x_1,x_2,x_3)} := |x_1| + \sqrt{x_2^2+x^2_3},
\end{equation}
we obtain the \sF distance 
\begin{equation}
    \di_{SF}\big((w, x, y, z), (w', x', y', z')\big) = |w-w'| + \di_{SR}\big(( x, y, z), ( x', y', z')\big),
\end{equation}
where $\di_{SR}$ denotes the canonical \sr distance on the Heisenberg group. In the resulting \sF Carnot group $(G, \di_{SF})$, $M_t(\e,\bar B(\e,1)) = \bar B(\e,t)$ for every $t\in[0,1]$.

In conclusion, finding a contradiction to \eqref{eq:rigiditymut} is much easier if the reference norm of the \sF structure on $G$ is strictly convex. However, for the general case, a more elaborate strategy is necessary.\\
(ii) The strategy to prove Theorem \ref{thm:noMCP} is inspired by the main results of \cite{EeroELD} in a way that we are going to detail here. 

In a \sF Carnot group $(G,\di_{SF})$, given a Lipschitz curve $\gamma:[0,1] \to G$, for every $h\in(0,1]$, define the curve $\gamma_h: [0,1/h] \to G$ as
\begin{equation}
    \gamma_h(t):= \delta_\frac 1h \big( \gamma(\bar t)^{-1} \cdot \gamma(\bar t + ht) \big).
\end{equation}
Accordingly, introduce the collection of tangents to $\gamma$ at $0$ as the set
\begin{equation}
    \Tang(\gamma,0):=
 \{\sigma:\R \to G \, :\, \exists(h_j)_{j\in \N} \to 0, \, \gamma_{h_j}\to \sigma \text{ uniformly on compact sets}\}.
\end{equation}
The main result of \cite{EeroELD} states that for every $\sigma \in \Tang(\gamma,0)$, the curve $\pi\circ\sigma: \R \to G/\exp(V_\s)$ is a geodesic, i.e. $\sigma\in P$. Repeating this blow-up procedure, they also prove that for every $\tilde\sigma$ in the $(\s-1)$-times iterated tangents $\Tang^{\s -1
}(\gamma,\bar t)$ (see \cite{EeroELD} for the definition), the curve $\pi\circ\tilde\sigma: \R \to G/[G,G]$ is a geodesic,  i.e. $\tilde \sigma\in \tilde P$.

Our strategy relies on an analogous blow-up procedure which, however, has to be done for (almost) all geodesics in the support of $\eta$ simultaneously. The key observation is that the rescalings by a sufficiently small factor of all curves in a set of the type $R_{C,\delta}$ are definitely uniformly close to the set $P$. This is indeed remarkable, as the rescalings $\gamma_h$ of $\gamma\in \Geo(G)$ do not necessarily have a unique limit as $h\to 0$. 
\end{remark}

We have now all the ingredients to prove the second main result.

\begin{theorem}
\label{thm:rectifiability_MCP}
    Given $K\in\R$ and $N\in (1,\infty)$, let $(\X,\di,\m)$ be an $\MCP(K,N)$ space. Assume that  
    \begin{itemize}
        \item[i)] for $\m$-a.e.\ $x\in\X$, $\Tan_{{\rm pGH}}(\X, \di,x)$ contains a single element, up to isometry;
        \item[ii)] for $\m$-a.e.\ $x\in\X$, $0<\loden{N}(x)\leq \upden{N}(x)<\infty$. 
    \end{itemize}
    Then:
    \begin{itemize}
    \item $N$ is an integer;
    \item  $(\X,\di,\m)$ is $(\m,N)$-rectifiable;
    \item for $\m$-a.e.\ $x\in\X$, $\Tan_{{\rm pmGH}}(\X, \di,\m,x)$ contains a single element (up to isomorphism) and its metric part is isometric to a $N$-dimensional Banach space.
    \end{itemize}
\end{theorem}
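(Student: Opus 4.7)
The plan is to mirror the strategy of the proof of Theorem \ref{thm:rectifiability<5}, substituting Theorem \ref{thm:aux_intro} (which was constrained to Hausdorff dimension below $5$) with Theorem \ref{thm:noMCP}, which rules out non-commutative \sF Carnot groups in \emph{every} dimension, provided the non-collapsing condition holds. Since Theorem \ref{thm:noMCP} has no dimension restriction, the strategy produces the result for arbitrary $N$.

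First, I would reduce to $\supp(\m)=\X$ and $\sigma$-finite pieces, then apply Lemma \ref{lem:gmt} with $n=N$ to obtain closed sets $A_i^k\subset \X$ exhausting $\X$ up to $\m$-null sets, on which $\m$ is two-sided bounded by $r^N$ at small scales and equivalent to $\mathcal H^N$. The $\MCP(K,N)$ hypothesis supplies the local uniform doubling needed to apply Lemma \ref{lem:gmt}, cf.\ \eqref{eq:loc_unif_doubling}. Next, for $\m$-a.e. $x\in\X$, assumption {\slshape i)} together with Theorem \ref{thm:eld} gives that the unique pGH tangent at $x$ is a \sF Carnot group $(G^x,\di^x_{SF},\e_x)$. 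Proposition \ref{prop:tan=tan} ensures that every element of $\Tan_{\rm pmG}(\X,\di,\m,x)$ has a pmGH representative $(\Y,\di^\Y,\n^\Y,y)$ with $\supp\n^\Y=\Y$ and $\n^\Y$ locally $N$-Ahlfors regular; the metric side is then isometric to $(G^x,\di^x_{SF})$, forcing the Hausdorff dimension of $G^x$ to equal $N$. Since this dimension coincides with the homogeneous dimension of $G^x$ (Remark \ref{rmk:hom_dim}), I conclude that $N$ is an integer.

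The second key step is to promote this tangent description into one admitting $\mathcal H^N$ as reference measure. By \cite[Cor.\ 4.8]{proceedingsVarenna} and \cite[Thm.\ 4.12]{proceedingsVarenna}, the pmGH tangents of $(G^x,\di^x_{SF},\n^\Y)$ at $\n^\Y$-a.e.\ point are of the form $(G^x,\di^x_{SF},\hat c_x\mathcal H^N,\e_x)$ for an explicit constant $\hat c_x>0$; applying \cite[Thm.\ 3.2]{MR3377394} (``tangents of tangents are tangents'') yields
\begin{equation}
    (G^x,\di^x_{SF},\hat c_x\mathcal H^N,\e_x)\in \Tan_{\rm pmGH}(\X,\di,\m,x),\qquad\text{for $\m$-a.e.\ } x\in\X.
\end{equation}
Using stability of $\MCP(K,N)$ under pmGH convergence (the rescaled spaces satisfy $\MCP(Kr_i^2,N)$ and we pass to the limit $r_i\to 0$), the tangent above is an $\MCP(0,N)$ space. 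Since it is non-collapsed by construction, Theorem \ref{thm:noMCP} forces $G^x$ to be commutative, hence isometric to a finite-dimensional Banach space $(\R^N,\normdotc_x)$.

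Finally, I would conclude rectifiability exactly as in the proof of Theorem \ref{thm:rectifiability<5}: for $\m$-a.e.\ $x\in A_i^k$ chosen as a mutual density point of $A_i^k$ with respect to $\m$ and $\mathcal H^N$ and as an $\m$-Lebesgue point of the density $\rho=\de\mathcal H^N/\de\m$, the Lebesgue-point argument shows
\begin{equation}
    \Tan_{\rm pmG}(\X,\di,\mathcal H^N,x)=\Tan_{\rm pmG}(\X,\di,\mathcal H^N_{|A_i^k},x)=\Tan_{\rm pmG}(\X,\di,\m_{|A_i^k},x)=\Tan_{\rm pmG}(\X,\di,\m,x),
\end{equation}
where the outer equalities use \cite[Cor.\ 5.7]{MR4506771}. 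By the previous step, these tangents are all (classes of) Banach spaces, so the implication {\slshape iii)} $\Rightarrow$ {\slshape i)} of Theorem \ref{thm:bate} applied to each $A_i^k$ gives its $N$-rectifiability; together these pieces yield the $(\m,N)$-rectifiability of $\X$, and the final bullet follows from \eqref{eq:bate}. The only substantive new ingredient beyond the $\cd$ case is Theorem \ref{thm:noMCP}, whose proof contains the genuine difficulty (the iterated blow-up and the cone estimate in Claim 2); the subsequent reduction to rectifiability is then routine.
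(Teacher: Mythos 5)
Your proposal is correct and follows essentially the same route as the paper: the paper's proof of Theorem \ref{thm:rectifiability_MCP} likewise reruns the argument of Theorem \ref{thm:rectifiability<5} verbatim (Lemma \ref{lem:gmt}, Theorem \ref{thm:eld}, Proposition \ref{prop:tan=tan}, the ``tangents of tangents'' step, and Bate's Theorem \ref{thm:bate}), replacing the low-dimensional $\cd$ obstruction with the stability of $\MCP$ and Theorem \ref{thm:noMCP} to force the tangent Carnot group to be commutative. Your write-up is in fact more detailed than the paper's, which simply points back to the earlier proofs.
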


\begin{proof}
     Following again the proof of Theorem \ref{thm:rectifiability<5}, we deduce that, for $\m$-a.e.\ $x\in \X$, and for every $(\Y, \di^\Y,\n^\Y,x)\in\Tan_{{\rm pmGH}}(\X, \di,\m,x)$, the metric space $(\Y,\di^\Y)$ is isometric to a \sF Carnot group $(G^x,\di_{SF}^x)$ with $\dim_{\text{Haus}}G^x=N$. Moreover, we get that for $\m$-a.e.\ $x\in \X$ 
    \begin{equation}
        (G^x,\di^x_{SF},\hat c_x\mathcal H^{N},\e_x) \in \Tan_{{\rm pmGH}}(\X,\di,\m,x). 
    \end{equation}
     Therefore, the stability of the measure contraction property, guarantees that $(G^x,\di^x_{SF},\hat c_x\mathcal H^{N})$ satisfies $\MCP(0,N)$. However, $G^x$ has homogeneous dimension $N$, as this is equal to its Hausdorff dimension, cf. Remark \ref{rmk:hom_dim}. Thus, Theorem \ref{thm:noMCP} allows to deduce that $G^x$ is commutative for $\m$-a.e.\ $x\in\X$. Proceeding as in the proof of Theorem \ref{thm:meta_rectifiability}, we obtain the desired conclusion.
\end{proof}

\begin{remark}\label{Rem:AfterThm1.2}
    The fact that Theorem \ref{thm:rectifiability_MCP} is proved assuming $\MCP(K,N)$ instead of $\cd(K,N)$ is quite remarkable. Indeed, according to \cite{MR4245620}, there exist plenty of \sr manifolds satisfying the measure contraction property. For instance, any Lipschitz Carnot group whose first layer is equipped with a left-invariant Riemannian metric and which is equipped with the Haar measure $\m$ satisfies $\MCP(0,N)$, for some $N >0$ (see \cite[Theorem 1.4]{MR4245620}). Moreover, such spaces have unique metric measure tangent at every point. However, it follows from \cite{Magnani} that these are not $(\m,n)$-rectifiable, for any choice of $n\in \mathbb{N}$. An explicit example is given by the Heisenberg group $\mathbb{H}^n=(\mathbb{R}^{2n+1}, \mathsf{d}_{SR}, \mathcal{L}^{2n+1})$, which satisfies $\MCP(0,N)$ for any $N\geq 2n+3$ (see \cite{MR2520783}), it has unique metric measure tangents at every point,  but it is not $(\mathcal{L}^{2n+1}, k)$-rectifiable, for any $k\in \mathbb{N}$. The latter follows by the fact that $\mathbb{H}^n$ is purely $k$-unrectifiable for any $k>n$; this was proved for $n=1$ in  \cite{AmbKirch-MathAnn} and in  \cite{Magnani, BaFa-09} for general $n\in \mathbb{N}$.  Notably, the non-collapsing assumption {\slshape ii)} in Theorem \ref{thm:rectifiability_MCP} is enough to rule out such examples and prove rectifiability. 
\end{remark}

\footnotesize{
\bibliography{biblio}

\bibliographystyle{alpha} 
}
\end{document}